\documentclass[11pt, reqno]{amsart}
\usepackage{amsmath,amssymb,amsfonts,amsthm,graphicx,mathrsfs,mathtools}
\usepackage{array}
\usepackage[margin=1.5in]{geometry}
\usepackage[dvipsnames]{xcolor}
\usepackage{graphicx,tipa}
\usepackage{tikz}
\usepackage{graphicx,amsmath,calc}
\usepackage{subcaption}
\makeatletter
\@namedef{subjclassname@2020}{%
\textup{2020} Mathematics Subject Classification}
\makeatother

\title[Complete minimal surfaces in $\mathbb{R}^4$]{Minimal surfaces in $\mathbb{R}^4$ with three planar ends\\
II: Symmetry and nonexistence}
\author[J. Lee]{Jaehoon Lee}
\address[]{Jaehoon Lee, Department of Mathematics, The Chinese University of Hong Kong, Shatin, N.T., Hong Kong}
\email{jaehoonlee@cuhk.edu.hk}

\author[E. Yeon]{Eungbeom Yeon}
\address[]{Eungbeom Yeon, Department of Mathematical Sciences, Pusan National University, Busan 46241, Republic of Korea}
\email{ebeom.yeon@pusan.ac.kr}

\allowdisplaybreaks

\begin{document}

\newtheorem{theorem}{theorem}[section]
\newtheorem{thm}[theorem]{Theorem}
\newtheorem{lemma}[theorem]{Lemma}
\newtheorem{cor}[theorem]{Corollary}
\newtheorem{prop}[theorem]{Proposition}
\newtheorem{rmk}[theorem]{Remark}
\newtheorem{Ex}[theorem]{Example}
\newtheorem{Question}[theorem]{Question}
\newtheorem{conj}[theorem]{Conjecture}
\newtheorem*{mainthm1}{Theorem 1}
\newtheorem*{mainthm2}{Theorem 2}

\renewcommand{\theequation}{\thesection.\arabic{equation}}
\newcommand{\RNum}[1]{\uppercase\expandafter{\romannumeral #1\relax}}
\newcommand{\R}{\mathbb{R}}
\newcommand{\C}{\mathbb{C}}
\newcommand{\grad}{\nabla}
\newcommand{\laplacian}{\Delta}
\newcommand{\tgamma}{\tilde{\gamma}}
\newcommand{\ttau}{\tilde{\tau}}
\newcommand{\pp}{\Phi}
\newcommand{\tkappa}{\tilde{\kappa}}
\renewcommand{\d}{\textup{d}}
\newlength{\mywidth}
\newcommand\bigfrown[2][\textstyle]{\ensuremath{%
  \array[b]{c}\text{\resizebox{\mywidth}{.7ex}{$#1\frown$}}\\[-1.3ex]#1#2\endarray}}
\newcommand{\arc}[1]{{%
  \setbox9=\hbox{#1}%
  \ooalign{\resizebox{\wd9}{\height}{\texttoptiebar{\phantom{A}}}\cr#1}}}
\newcommand\buildcirclepm[1]{%

  \begin{tikzpicture}[baseline=(X.base), inner sep=-0.1, outer sep=-2]

    \node[draw,circle] (X)  {\footnotesize\raisebox{1ex}{$#1\pm$}};

  \end{tikzpicture}%

}

\subjclass[2020]{53A10, 53C42}
\keywords{Costa--Hoffman--Meeks surface, Lagrangian catenoid, minimal surface, generalized Weierstrass representation, high codimension}

\begin{abstract}
In this paper, we study complete minimal surfaces in $\mathbb{R}^4$ with three embedded planar ends parallel to those of the union of the Lagrangian catenoid and the plane containing its waist circle. We prove that no such complete embedded minimal surface of genus $g\geq2$ with finite total curvature admits at least $4(g+1)$ symmetries. This contrasts sharply with the genus-one case, where highly symmetric examples do exist and are uniquely determined up to scaling and rigid motion.
\end{abstract}

\maketitle

\section{Introduction}\label{intro}
\setcounter{equation}{0}
The Costa--Hoffman--Meeks surface of genus $g$ in $\mathbb{R}^3$ is a complete embedded minimal surface with finite total curvature and three ends, having $4(g+1)$ symmetries. These properties are known to characterize the surface uniquely \cite{HM2}. It may also be viewed as a desingularization of the union of a catenoid and the plane containing its waist circle, with the genus inserted along the singular intersection curve (see \cite{HMARMA}). This naturally raises the question of whether a similar construction is possible in higher codimension; more precisely, whether one can desingularize the union of a \emph{catenoid-like} minimal surface in $\mathbb{R}^n$, $n\geq 4$, and a plane.

Hoffman and Osserman \cite{HO} classified complete doubly connected minimal surfaces with total curvature $-4\pi$ in $\mathbb{R}^n$. Among them, the surfaces foliated by circles are the catenoids in $\mathbb{R}^3$, the Lagrangian catenoids in $\mathbb{R}^4$, and the catenoids lying entirely in $\mathbb{R}^5$, which we refer to as the Hoffman--Osserman catenoids in $\mathbb{R}^5$ (see Case $1c$ in Remark $1$ following Proposition $6.6$ in \cite{HO}). 

In \cite{JLEY}, the authors studied the genus-one case of the desingularization problem for the Lagrangian catenoid in $\mathbb{R}^4$. It was proved in \cite[Theorem 3.1]{JLEY} that such a desingularization must be $J$-holomorphic for some almost complex structure $J$. Moreover, the following uniqueness result was obtained:
\begin{mainthm1}[Theorem $2$ in \cite{JLEY}]
Let $S_1\subset\mathbb{R}^4$ be a complete, oriented, embedded minimal surface with finite total curvature and genus $1$, having three embedded planar ends whose asymptotic planes are parallel to those of the union of the Lagrangian catenoid and the plane containing its waist circle. Suppose that $S_1$ has at least $8$ symmetries, that is, $\left|\mathfrak{Sym}(S_1)\right|\geq 8$. Then either $\left|\mathfrak{Sym}(S_1)\right|=8$ or $\left|\mathfrak{Sym}(S_1)\right|=12$. In either case, $S_1$ is uniquely determined up to scaling and rigid motion. 
\end{mainthm1}
We note that this result is analogous to the characterization of the Costa surface under the assumption of at least $8$ symmetries. 

In this paper, we study the genus-$g$ desingularization problem under the assumption of at least $4(g+1)$ symmetries for $g\geq 2$. We begin with a slightly larger class of doubly connected minimal surfaces in $\mathbb{R}^4$ that includes the Lagrangian catenoid. By the classification of Hoffman and Osserman \cite[Proposition 6.6]{HO}, all doubly connected minimal surfaces in $\mathbb{R}^4$ with two embedded planar ends are, up to scaling and rigid motion, given by the one-parameter family $\{\mathcal{DC}_a\}_{a\in\mathbb{C}}$,
\begin{align*}
\mathcal{DC}_a:=\left\{\left(z, az+\frac{1}{z}\right)\in\mathbb{C}^2\ \bigg{|}\ z\in\mathbb{C}\setminus\{0\}\right\}.
\end{align*}
The case $a=0$ is the Lagrangian catenoid (see Section \ref{subsec22} for details). In Section \ref{subsec22}, we characterize the planes that intersect $\mathcal{DC}_a$ along a smooth simple closed curve; the resulting pairs of $\mathcal{DC}_a$ and such planes provide natural candidates for the desingularization problem.

Since a symmetry of the surface induces a permutation of the asymptotic planes, the choice of these planes imposes constraints on the possible symmetries. In Section \ref{sym}, we analyze the symmetries compatible with the configuration of the asymptotic planes and show that, among the candidate desingularization pairs associated with $\mathcal{DC}_a$, only those involving the Lagrangian catenoid can admit more than $8$ symmetries; see Proposition \ref{propprop46}. This suggests that the pair consisting of the Lagrangian catenoid and the plane containing its waist circle is the most symmetric configuration for such a desingularization.

Motivated by the genus-one result, one might expect an analogous higher-genus construction and characterization under the assumption of at least $4(g+1)$ symmetries, in parallel with the Costa--Hoffman--Meeks surfaces \cite{HM2}. However, in the higher-genus case, we prove the following nonexistence result.

\begin{mainthm2}[Theorem \ref{Thm61}]
There is no complete, oriented, embedded minimal surface $S_g\subset\mathbb{R}^4$ with finite total curvature and genus $g\geq 2$ such that $S_g$ has three embedded planar ends whose asymptotic planes are parallel to those of the union of the Lagrangian catenoid and the plane containing its waist circle, and has at least $4(g+1)$ symmetries.
\end{mainthm2}

The proof is based on a modification of the method of Hoffman and Meeks \cite{HM2}, adapted to the setting of $\mathbb{R}^4$ using the generalized Weierstrass representation. When the surface has at least $4(g+1)$ symmetries, the underlying Riemann surface can be described as a cyclic branched cover over the Riemann sphere (see Proposition \ref{prop59}). The main difference from \cite{HM2} arises from the codimension-two setting: in codimension one, embeddedness uniquely determines the orientation of each end and its behavior under the symmetries, whereas in codimension two, several additional possibilities must be taken into account. See Section \ref{sec8} for further details.

In a forthcoming paper, we consider the desingularization problem for the Hoffman--Osserman catenoids in $\mathbb{R}^5$. In this case, for each genus $g$, we construct a new complete embedded minimal surface lying entirely in $\mathbb{R}^5$, with finite total curvature and $4(g+1)$ symmetries. We emphasize, however, that the two families differ in the behavior of their ends: the Lagrangian catenoids have two embedded planar ends, whereas the Hoffman--Osserman catenoids have ends with logarithmic growth, similar to the catenoid in $\mathbb{R}^3$. This difference in the behavior of the ends may account, at least in part, for the different outcomes of the two desingularization problems.

The paper is organized as follows. In Section \ref{pre}, we discuss the Lagrangian catenoid and the family $\mathcal{DC}_a$, and identify natural candidates for desingularization. We also recall the generalized Weierstrass representation. Section \ref{wdepe} expresses the Weierstrass data for minimal surfaces with three embedded planar ends in terms of the values of the generalized Gauss map at the ends. In Section \ref{sym}, we analyze the possible symmetry types and show that, among the candidate desingularization pairs associated with $\mathcal{DC}_a$, only those involving the Lagrangian catenoid can admit more than $8$ symmetries. In Section \ref{RH}, we adapt the method of Hoffman and Meeks to identify the possible underlying Riemann surfaces under the assumptions of embeddedness and at least $4(g+1)$ symmetries. Finally, Section \ref{sec8} establishes the nonexistence result for genus $g\geq2$ under these assumptions.

\section*{Acknowledgements}
JL was supported by a KIAS Individual Grant MG086402 at Korea Institute for Advanced Study, in part by National Research Foundation of Korea [No. RS-2026-25472185], and substantially by research grants from the Research Grants Council of the Hong Kong Special Administrative Region, China [Project No.: CUHK 14306624]. EY was supported by National Research Foundation of Korea NRF-2022R1C1C2013384 and in part by NRF-2021R1A4A1032418. Parts of this work were carried out during visits to the University of Ja\'{e}n and Stanford University. The authors would like to express their sincere gratitude to Professor Ildefonso Castro at the University of Ja\'{e}n and Professor Otis Chodosh at Stanford University for their kind invitations. The authors also thank both institutions for their hospitality and support.

\section{Preliminaries}\label{pre}
\setcounter{equation}{0}
\subsection{Lagrangian catenoid in $\mathbb{R}^4$}
The \textbf{Lagrangian catenoid} $\Sigma_{LC}$ in $\mathbb{R}^4$ is an embedded minimal surface which, after applying rigid motions and scalings, can be identified with
\begin{align*}
\Sigma_{LC}:=\left\{(z, w)\in\mathbb{C}^2\ |\ zw=1\right\}.
\end{align*}
Throughout this paper we identify $\mathbb{C}^2$ with $\mathbb{R}^4$ via 
\begin{align*}
(z, w)\in\mathbb{C}^2\leftrightarrow (\text{Re}z, \text{Im}z, \text{Re}w, \text{Im}w)\in\mathbb{R}^4. 
\end{align*}
Under this identification, a point on $\Sigma_{LC}$ corresponding to $z=re^{i\theta}$ can be written as
\begin{align*}
\left(z,\frac{1}{z}\right)=\left(re^{i\theta},\frac{1}{r}e^{-i\theta}\right)=\cos\theta\left(r,0,\frac{1}{r},0\right)+\sin\theta\left(0,r,0,-\frac{1}{r}\right).
\end{align*}
This shows that $\Sigma_{LC}$ is foliated by circles. 

Moreover, for each $r>0$, the two circles corresponding to $|z|=r$ and $|z|=\frac{1}{r}$, which have the same radius $\sqrt{r^2+\frac{1}{r^2}}$, lie in the planes 
\begin{align*}
\left\{(z, w)\in\mathbb{C}^2\ |\ z=r^2\overline{w}\right\}\quad\text{and}\quad \left\{(z, w)\in\mathbb{C}^2\ |\ w=r^2\overline{z}\right\}, 
\end{align*}
respectively. In particular, when $r=1$, the two circles and the two planes coincide. In this sense, the plane $\left\{(z, w)\in\mathbb{C}^2\ |\ z=\overline{w}\right\}$ containing the circle of radius $\sqrt{2}$ is referred to as the \textbf{center plane} $\Pi_c$. 

Similar to the catenoid in $\mathbb{R}^3$, $\Sigma_{LC}$ has several symmetries. It is invariant under the rotations
\begin{align*}
(z,w)\mapsto (e^{i\theta}z, e^{-i\theta}w)
\end{align*}
and under the reflection through the center plane $\Pi_c$ given by
\begin{align*}
(z,w)\mapsto (\overline{w}, \overline{z}).
\end{align*}

The Lagrangian catenoid $\Sigma_{LC}$ has two embedded ends that are asymptotic to planes. We will discuss this type of end in more detail in Section \ref{wdepe}. Before giving the precise definition there, we occasionally use the notion of embedded planar ends without further explanation. 

\subsection{Doubly connected minimal surfaces $\mathcal{DC}_a$ in $\mathbb{R}^4$}\label{subsec22}
If a doubly connected minimal surface has two embedded planar ends, as in the case of the Lagrangian catenoid, a simple argument using the Gauss-Bonnet theorem shows that its total curvature is $-4\pi$. In \cite[Proposition 6.6]{HO}, Hoffman and Osserman classified complete doubly connected minimal surfaces in $\mathbb{R}^n$($n\geq3$) with total curvature $-4\pi$. According to their classification, those with embedded planar ends are of the form
\begin{align*}
\left\{\left(z, az+\frac{b}{z}\right)\in\mathbb{C}^2\ \bigg{|}\ z\in\mathbb{C}\setminus\{0\}\right\}
\end{align*} 
for some $a\in\mathbb{C}$ and $b\in\mathbb{C}\setminus\{0\}$. 

By applying rigid motions and scalings, we may further assume that $b=1$. Indeed, writing $b=|b|e^{i\theta_b}$ and defining $\hat{b}:=|b|^{\frac{1}{2}}e^{i\frac{1}{2}\theta_b}$, we obtain
\begin{align*}
\frac{1}{\hat{b}}\cdot\left(z,az+\frac{b}{z}\right)=\left(\frac{z}{\hat{b}},a\cdot\frac{z}{\hat{b}}+\frac{\hat{b}}{z}\right)=\left(w, aw+\frac{1}{w}\right),
\end{align*}
where $w=\frac{z}{\hat{b}}$. We denote these doubly connected minimal surfaces by $\{\mathcal{DC}_a\}_{a\in\mathbb{C}}$, where
\begin{align*}
\mathcal{DC}_a=\left\{\left(z, az+\frac{1}{z}\right)\in\mathbb{C}^2\ \bigg{|}\ z\in\mathbb{C}\setminus\{0\}\right\}.
\end{align*}
The case $a=0$ corresponds exactly to the Lagrangian catenoid, namely, $\mathcal{DC}_{0}=\Sigma_{LC}$.

Writing $z=re^{i\theta}$ and $a=|a|e^{i\theta_a}$, we have
\begin{align*}
\left(z, az+\frac{1}{z}\right)=\left(re^{i\theta}, |a|re^{i(\theta+\theta_a)}+\frac{1}{r}e^{-i\theta}\right).
\end{align*}
Under the identification of $\mathbb{C}^2$ with $\mathbb{R}^4$, this becomes
\begin{align*}
r\cos\theta\left(1, 0, |a|\cos\theta_a+\frac{1}{r^2}, |a|\sin\theta_a\right)+r\sin\theta\left(0, 1, -|a|\sin\theta_a, |a|\cos\theta_a-\frac{1}{r^2}\right).
\end{align*}
Thus, for each $a\in\mathbb{C}$ and $r>0$, $\mathcal{DC}_a$ intersects the plane spanned by
\begin{align*}
\left(1, 0, |a|\cos\theta_a+\frac{1}{r^2}, |a|\sin\theta_a\right)\quad\text{and}\quad\left(0, 1, -|a|\sin\theta_a, |a|\cos\theta_a-\frac{1}{r^2}\right)
\end{align*}
along an ellipse. In particular, each $\mathcal{DC}_a$ is foliated by such ellipses. We note that these are the only smooth simple closed planar curves on $\mathcal{DC}_a$, as stated in the following lemma. The proof is given in $\textbf{Appendix\ A}$.
\begin{lemma}\label{AppendixLem}
The only smooth simple closed planar curves on $\mathcal{DC}_a$ are the ellipses corresponding to $|z|=r$, $r>0$.
\end{lemma}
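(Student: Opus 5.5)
The plan is to parametrize a planar curve on $\mathcal{DC}_a$ and exploit that it lies in a real $2$-plane $P\subset\mathbb{R}^4$, so that two independent real linear functionals vanish along it (after translating so that a point of $P$ is the origin). Concretely, $P$ is cut out by two equations $\mathrm{Re}(\alpha_j z+\beta_j w)=c_j$, $j=1,2$, with $(\alpha_j,\beta_j)\in\mathbb{C}^2$ linearly independent over $\mathbb{R}$. Substituting $w=az+1/z$, each equation becomes
\begin{align*}
\mathrm{Re}\left((\alpha_j+a\beta_j)z+\frac{\beta_j}{z}\right)=c_j,
\end{align*}
a real-analytic equation on $\mathbb{C}\setminus\{0\}$. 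Hence $\mathcal{DC}_a\cap P$ is the common zero set $\{f_1=f_2=0\}$ of two real functions $f_j(z)=\mathrm{Re}(A_jz+B_j/z)-c_j$. A smooth simple closed curve $\gamma$ in this intersection forces $f_1,f_2$ to be dependent along $\gamma$.

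First, I show that each $f_j$ with $(A_j,B_j)\neq(0,0)$ has a zero set that is a real algebraic curve: multiplying by $|z|^2$ gives $\mathrm{Re}(A_jz|z|^2+B_j\bar z)-c_j|z|^2=0$, a cubic in $(x,y)$. If the zero sets of $f_1$ and $f_2$ share a one-dimensional component containing $\gamma$, then by Bezout/irreducibility arguments the two cubics have a common factor. The aim is to show this forces a specific form. Writing $z=re^{i\theta}$, $f_j=\mathrm{Re}((A_jr+\bar B_j/r)e^{i\theta})\cdot$(appropriately) $-c_j$. More precisely, $f_j=\mathrm{Re}\big((A_jr+\overline{B_j}\,r^{-1}\cdots)\big)$, so for fixed $r$ it is a first-order trigonometric polynomial in $\theta$ minus a constant.

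The key step is to prove that $\gamma$ must be a circle $|z|=r$ in the parameter domain. Since $\gamma$ is a closed curve in $\mathbb{C}\setminus\{0\}$, I would split into two cases: $\gamma$ winds around $0$ or it does not. If it does not wind, I use the maximum principle: $f_j$ is harmonic (real part of a holomorphic function), so a closed curve bounding a disc $D\subset\mathbb{C}\setminus\{0\}$ on which $f_1=0$ forces $f_1\equiv0$, hence $A_1=B_1=0$ and $c_1=0$. The same holds for $f_2$, contradicting independence unless $(\alpha_j,\beta_j)$ are degenerate, which is excluded. If $\gamma$ winds around $0$, I consider the real linear combination $g=\lambda_1f_1+\lambda_2f_2$, which is again harmonic and vanishes on $\gamma$. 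I then argue that the space of such combinations is two-dimensional, and that one can choose $\lambda$ to kill the constant $c$. The resulting $h=\mathrm{Re}(Az+B/z)$ vanishes on a curve around $0$. Comparing Laurent/Fourier coefficients on the annulus bounded by $\gamma$ and a small circle, or using the mean value property for $\mathrm{Re}(Az+B/z)$ on the annulus, I expect to derive $|A|r=|B|/r$ type constraints that pin $\gamma$ to a level set of $|z|$.

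The main obstacle is the winding case: showing that the zero set of $\mathrm{Re}(Az+B/z)-c$ together with a second such equation can contain a closed curve only when it is a round circle $|z|=r$, and that the plane is then exactly the ellipse plane identified above. I would first try to compute directly: on $|z|=r$ the ellipse plane works, and conversely I would parametrize $\gamma$ by $\theta$ as $r(\theta)$ (possible if star-shaped; otherwise use harmonic measure arguments). Eliminating $c$ between the two equations, I would show that $r(\theta)$ must be constant by a Fourier-mode comparison. Finally, the plane is recovered from the tangent data.
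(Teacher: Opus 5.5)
Your reduction is sound. With $P=\{\ell_1=c_1,\ \ell_2=c_2\}$ and $(\lambda_1,\lambda_2)\neq(0,0)$ chosen so that $\lambda_1c_1+\lambda_2c_2=0$, the function $h=\lambda_1f_1+\lambda_2f_2=\mathrm{Re}(Az+B/z)$ vanishes on the projected curve $\gamma\subset\mathbb{C}\setminus\{0\}$. Moreover $(A,B)\neq(0,0)$ because $(\alpha_1,\beta_1)$ and $(\alpha_2,\beta_2)$ are $\mathbb{R}$-independent; you should say this explicitly ($B=0$ forces $\beta=0$, and then $A=\alpha\neq0$). Your maximum-principle argument for a curve not enclosing the origin is also correct.

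\textbf{The gap.} The case that carries the content of the lemma is not proved; the ellipses themselves project to curves enclosing $0$. You only say you ``expect'' a constraint like $|A|r=|B|/r$. Parametrizing $\gamma$ as $r=r(\theta)$ presupposes star-shapedness, which nothing gives you, and ``harmonic measure arguments'' is not an argument. The Fourier/mean-value comparison on the annulus is never carried out, and the Bezout discussion plays no role. As written, the proposal only rules out planar curves that do not enclose the origin.

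\textbf{How to close it.} The missing step follows from the formula you wrote yourself: $h(re^{i\theta})=\mathrm{Re}\bigl(C(r)e^{i\theta}\bigr)$ with $C(r)=Ar+\overline{B}/r$.
Since $(A,B)\neq(0,0)$, $C$ vanishes for at most one radius $r_0$. This happens exactly when $-\overline{B}/A>0$, with $r_0^2=-\overline{B}/A$; that is the precise form of your expected constraint.
For every other $r$, the zero set of $h$ meets $\{|z|=r\}$ in exactly two antipodal points $\theta\equiv\frac{\pi}{2}-\arg C(r)\pmod{\pi}$, and these depend continuously on $r$. Hence $\{h=0\}\cap\{|z|\neq r_0\}$ is a disjoint union of at most four arcs, each mapped homeomorphically onto an interval by $|z|$.
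Now suppose $\gamma\not\subset\{|z|=r_0\}$ and take a component $\sigma$ of $\gamma\setminus\{|z|=r_0\}$. Either $\sigma=\gamma$, which is impossible since a Jordan curve cannot lie in an arc. Or $\sigma$ is an open sub-arc of $\gamma$ along which $|z|$ is strictly monotone, yet $|z|\to r_0$ at both of its ends, which is a contradiction.
Hence $\gamma$ is the circle $|z|=r_0$. This step also makes your winding/non-winding split unnecessary.

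\textbf{Comparison with the paper.} With this inserted, you get a valid alternative to the paper's proof. The paper instead uses that the osculating plane spanned by $X'(s),X''(s)$ is constant, and that a complex combination of its Pl\"{u}cker coordinates equals $2(z'/z)^3$, forcing $\mathrm{Re}(z'/z)\equiv0$. Your route needs only continuity of the curve, not smoothness.
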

Thus, the planes containing these ellipses provide a natural class of candidates for desingularizing the union of $\mathcal{DC}_a$ with a plane. 

For later use, we introduce the following notation. For $a\in\mathbb{C}$ and $r_0>0$, let
\begin{align*}
Q_1(a):= \left\{(z, az)\in\mathbb{C}^2\ |\ z\in\mathbb{C}\right\},\quad Q_3:=\left\{(0,z)\in\mathbb{C}^2\ |\ z\in\mathbb{C}\right\},
\end{align*}
and
\begin{align*}
Q_2(a, r_0):=\left\{\left(z, az+\frac{1}{{r_0}^2}\bar{z}\right)\in\mathbb{C}^2\ |\ z\in\mathbb{C}\right\}.
\end{align*}
Here, $Q_1(a)$ and $Q_3$ are the two asymptotic planes of $\mathcal{DC}_a$, while $Q_2(a, r_0)$ is the plane intersecting $\mathcal{DC}_a$ along the ellipse corresponding to $|z|=r_0$.

Since these planes pass through the origin, they can be regarded as $\mathbb{R}$-vector subspaces of $\mathbb{R}^4$. Writing $a=|a|e^{i\theta_a}$, we choose bases for later computations so that these planes can be written as follows:
\begin{align*}
Q_1(a)&=\mathbb{R}\begin{pmatrix} 1\\0\\|a|\cos\theta_a\\|a|\sin\theta_a\end{pmatrix}\oplus\mathbb{R}\begin{pmatrix} 0\\1\\-|a|\sin\theta_a\\|a|\cos\theta_a\end{pmatrix},\\
Q_2(a, r_0)&=\mathbb{R}\begin{pmatrix} 1\\0\\|a|\cos\theta_a+\frac{1}{{r_0}^2}\\|a|\sin\theta_a\end{pmatrix}\oplus\mathbb{R}\begin{pmatrix} 0\\1\\-|a|\sin\theta_a\\|a|\cos\theta_a-\frac{1}{{r_0}^2}\end{pmatrix},\\
Q_3&=\mathbb{R}\begin{pmatrix} 0\\0\\1\\0\end{pmatrix}\oplus\mathbb{R}\begin{pmatrix} 0\\0\\0\\1\end{pmatrix}.
\end{align*}

\subsection{Generalized Weierstrass representation}
A key tool in the classification result of Hoffman and Osserman discussed in the previous subsection is the generalized Weierstrass representation, which was extensively studied in \cite{HO}. This framework enables the study of $2$-dimensional minimal surfaces in arbitrary codimension through Riemann surface theory. Since this representation also plays a central role in the present paper, we briefly recall the necessary concepts. For further details, we refer the reader to \cite{HO}.

A complete immersed minimal surface in $\mathbb{R}^N$ with finite total curvature and genus $g$ can be represented by an immersion $X: \Sigma_g\setminus\{q_1,q_2,\cdots,q_k\}\to\mathbb{R}^N$ from of a closed Riemann surface $\Sigma_g$ of genus $g$ with finitely many punctures, where
\begin{align}\label{gwr}
X=\text{Re}\int (\phi_1,\phi_2,\cdots,\phi_N)+b
\end{align}
for some $b\in\mathbb{R}^N$. Here, the $\phi_j$ are meromorphic $1$-forms on $\Sigma_g$ satisfying the following conditions:
\begin{itemize}
\item[(1)] Each $\phi_j$ may have poles only at $\{q_1, q_2, \cdots,q_k\}$, and at least one of them has a pole at each $q_j$.
\item[(2)] $\sum_{j=1}^N\phi_j^2\equiv0$ and $\sum_{j=1}^N|\phi_j|^2>0$.
\item[(3)] All real periods of the $\phi_j$ vanish.
\end{itemize}
Conversely, if a set of meromorphic $1$-forms satisfies the conditions above, then the map $X$ defined by (\ref{gwr}) is a conformal harmonic immersion and hence defines a complete minimal surface with finite total curvature and genus $g$ in $\mathbb{R}^N$. In this setting, each $q_j$ corresponds to an end of the surface. 

The formula (\ref{gwr}) is referred to as the \textbf{generalized Weierstrass representation}, and the meromorphic $1$-forms are called the (generalized) \textbf{Weierstrass data}. Moreover, the map 
\begin{align*}
\Phi:=[\phi_1,\phi_2,\cdots,\phi_N]:\Sigma_g\to\mathbb{P}^{N-1}
\end{align*}
is called the \textbf{generalized Gauss map}. By condition $(2)$, the image of $\Phi$ lies in the quadric 
\begin{align*}
\mathscr{Q}_{N-2}:=\left\{[\zeta_1,\zeta_2,\cdots,\zeta_N]\in\mathbb{P}^{N-1}\ |\ \sum_{j=1}^N\zeta_j^2=0\right\}.
\end{align*}

In the case $N=4$, it is known that the quadric $\mathscr{Q}_2$ is doubly ruled, and its doubly ruled structure induces a biholomorphism with $\mathbb{P}^1\times\mathbb{P}^1$. More precisely, $\mathscr{Q}_2$ is ruled by the following two families:
\begin{itemize}
\item[] $L_{[a,b]}:=\left\{[ax+by,-i(ax-by),-bx+ay,-i(bx+ay)]\in \mathscr{Q}_2\ |\ [x,y]\in\mathbb{P}^1\right\}$,
\item[] $M_{[a,b]}:=\left\{[ax+by,-i(ax-by),bx-ay,-i(bx+ay)]\in \mathscr{Q}_2\ |\ [x,y]\in\mathbb{P}^1\right\}$,
\end{itemize}
where $[a,b]\in\mathbb{P}^1$. For a point $[\zeta_1, \zeta_2, \zeta_3, \zeta_4]\in \mathscr{Q}_2\setminus\left(L_{[1,0]}\cup M_{[1,0]}\right)$, the two rulings passing through it are $L_{[\zeta_3+i\zeta_4,\zeta_1-i\zeta_2]}$ and $M_{[-\zeta_3+i\zeta_4,\zeta_1-i\zeta_2]}$. Consequently, the map
\begin{align*}
[\zeta_1,\zeta_2,\zeta_3,\zeta_4]\mapsto\left(\frac{\zeta_3+i\zeta_4}{\zeta_1-i\zeta_2},\frac{-\zeta_3+i\zeta_4}{\zeta_1-i\zeta_2}\right)
\end{align*}
from $\mathscr{Q}_2\setminus\left(L_{[1,0]}\cup M_{[1,0]}\right)$ to $\mathbb{C}\times\mathbb{C}$ naturally extends to a biholomorphism between $\mathscr{Q}_2$ and $\mathbb{P}^1\times\mathbb{P}^1$. Composing the generalized Gauss map $\Phi$ with this biholomorphism yields two meromorphic functions $G_1$ and $G_2$ on $\Sigma_g$, given by
\begin{align}\label{GaussMeroftn}
G_1=\frac{\phi_3+i\phi_4}{\phi_1-i\phi_2}, \quad G_2=\frac{-\phi_3+i\phi_4}{\phi_1-i\phi_2},
\end{align}
provided that $\phi_1-i\phi_2\not\equiv0$. In some contexts, the pair $(G_1,G_2)$ is also referred to as the generalized Gauss map.

\begin{Ex}\label{EXDCa}\normalfont
Under the identification $\mathbb{C}^2\simeq\mathbb{R}^4$, the doubly connected minimal surface $\mathcal{DC}_a$ is represented by an immersion $X:\mathbb{C}\setminus\{0\}\to\mathbb{R}^4$ given by
\begin{align*}
X(z)=\left(\text{Re}z, \text{Im}z, \text{Re}\left(az+\frac{1}{z}\right), \text{Im}\left(az+\frac{1}{z}\right)\right)\in\mathbb{R}^4.
\end{align*}
The Weierstrass data can be computed as
\begin{align*}
\left(\phi_1,\phi_2,\phi_3,\phi_4\right)=2X_zdz=\left(dz,-idz,\left(a-\frac{1}{z^2}\right)dz,-i\left(a-\frac{1}{z^2}\right)dz\right),
\end{align*}
and the generalized Gauss map $\Phi:\mathbb{C}\setminus\{0\}\to\mathbb{P}^3$ is given by
\begin{align*}
\Phi(z)=\left[1,-i,a-\frac{1}{z^2},-i\left(a-\frac{1}{z^2}\right)\right]\in\mathbb{P}^3.
\end{align*}
In this case, since $\phi_1-i\phi_2\equiv0$, the expression (\ref{GaussMeroftn}) cannot be directly applied. However, the two rulings passing through $\Phi(z)\in\mathbb{P}^3$ are $L_{[1,0]}$ and $M_{[1,a-\frac{1}{z^2}]}$, which determine $G_1$ and $G_2$ as
\begin{align*}
G_1\equiv\infty,\quad G_2=\frac{z^2}{az^2-1}.
\end{align*}
\end{Ex}

\section{Weierstrass Data for Embedded Planar Ends}\label{wdepe}
\setcounter{equation}{0}
We revisit the notion of embedded planar ends in high codimension. In contrast to the codimension $1$ case, an embedded minimal end may asymptotically approach a plane but still wrap around it with multiplicity. Since our main interest lies in multiplicity-one ends, such as those of the Lagrangian catenoid and $\mathcal{DC}_a$, \cite{JL} introduced the following definition of embedded planar ends in high codimension.

Let $X:\Sigma_g\setminus\{q_1,q_2,\cdots,q_k\}\to\mathbb{R}^N$ be a complete minimal immersion with finite total curvature and genus $g$, given by (\ref{gwr}). A minimal immersion $X$ is said to have an \textbf{embedded planar end at $q_j$} if there exists an open neighborhood $U_j$ of $q_j$ such that $X(U_j\setminus\{q_j\})$ is asymptotic to a plane and can be expressed as a graph over that plane outside a compact subset. Moreover, none of the height functions has logarithmic growth. We say that the immersion $X$ has embedded planar ends if it has an embedded planar end at each $q_j$.

This definition is consistent with the classical notion of embedded planar ends in $\mathbb{R}^3$. We now recall the following proposition, which provides an equivalent characterization of this definition in terms of the Weierstrass data.
\begin{prop}[Proposition 2.1 in \cite{JL}]\label{PropOrdRes}
A minimal immersion $X$ given by the generalized Weierstrass representation (\ref{gwr}) has embedded planar ends if and only if
\begin{align}\label{OrdRes}
\min_{1\leq j\leq N}\textup{ord}_{q_i}\phi_j=-2\quad \text{and}\quad \textup{res}_{q_i}\phi_1=\textup{res}_{q_i}\phi_2=\cdots=\textup{res}_{q_i}\phi_N=0
\end{align} 
for all $i=1,2,\cdots,k$.
\end{prop}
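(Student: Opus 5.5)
The plan is to work locally at a single puncture $q=q_i$. I would fix a conformal coordinate $z$ centered at $q$ and write each form as a Laurent expansion. All the information will come from comparing leading coefficients, using the conformality relation $\sum_j\phi_j^2\equiv 0$ and the period condition (3). Finite total curvature guarantees that every $\phi_j$ is meromorphic at $q$. So we may write
\begin{align*}
\phi_j=\left(c_jz^{-m}+\cdots+a_jz^{-1}+h_j(z)\right)dz,
\end{align*}
where $m=-\min_j\textup{ord}_q\phi_j\geq 2$ by completeness, $c=(c_1,\dots,c_N)\neq 0$, $h_j$ is holomorphic, and $a_j=\textup{res}_q\phi_j$. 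I set $a=(a_1,\dots,a_N)$. The leading coefficient of $\sum_j\phi_j^2$ gives $c\cdot c=0$, where $\cdot$ is the complex bilinear product. Writing $c=u-iv$ with $u,v\in\mathbb{R}^N$, this says $|u|=|v|$ and $u\perp v$. Condition (3) applied to a small loop around $q$ gives $\textup{Re}(2\pi i\, a)=0$, so $a\in\mathbb{R}^N$.

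\emph{Sufficiency.} Assume $m=2$ and $a=0$. Then $X=\textup{Re}(-c/z)+(\text{bounded})+O(|z|)$. In the coordinate $w=1/z$, the leading term $\textup{Re}(-cw)$ is, by the relations $|u|=|v|$ and $u\perp v$, a homothety from $\mathbb{C}$ onto the plane $P=\textup{span}_{\mathbb{R}}\{u,v\}$. Projecting $X$ orthogonally onto $P$ therefore gives a map which, for $|w|$ large, is this homothety plus a bounded perturbation with bounded derivative. Its derivative is a small perturbation of a conformal isomorphism, and a degree argument on large circles (winding number one) then shows that it is a diffeomorphism from a punctured neighborhood onto the exterior of a compact set in $P$. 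Consequently the end is a graph over $P$. Its height (normal) components are $\textup{Re}$ of a function holomorphic at $q$, so they are bounded and in particular have no logarithmic growth. Hence the end is an embedded planar end.

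\emph{Necessity.} Assume the end is an embedded planar end. First I would show $m=2$. The leading term $\textup{Re}(-c\,w^{m-1}/(m-1))$ shows that the projection to the asymptotic plane wraps $m-1$ times around large circles. A graph is single-sheeted, so $m-1=1$.

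Next, with $m=2$, I compare the $z^{-3}$ coefficient of $\sum_j\phi_j^2\equiv0$. This gives $2\,c\cdot a=0$. Since $a$ is real, taking real and imaginary parts yields $u\cdot a=v\cdot a=0$, so $a$ is normal to the asymptotic plane $P$. The residue contributes $\textup{Re}(a\log z)=a\log|z|$ to $X$. This term lies entirely in a height direction and has logarithmic growth unless $a=0$. The definition of embedded planar end excludes logarithmic growth of the heights, so $a=0$.

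I expect the main obstacle to be the multiplicity step $m=2$. It requires care to justify that a ``graph outside a compact set'' forces winding number one, and in particular that the asymptotic plane in the definition coincides with $P=\textup{span}\{u,v\}$ determined by the leading coefficient. I would handle this by first identifying the limiting tangent plane of the end with $P$, using the fact that the normalized Gauss map converges to $[c]$ at $q$. Then I would compute the degree of the projection onto $P$ restricted to large circles; this degree is $m-1$, and it must equal $1$ for a graph.
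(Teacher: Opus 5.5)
The paper does not prove this proposition; it is quoted from \cite{JL}, so there is no in-paper proof to compare with. Judged on its own, your argument is the standard local one and is correct in substance: Laurent expansion at $q$, $c\cdot c=0$ from the top coefficient of $\sum_j\phi_j^2$, $a\in\mathbb{R}^N$ from the period around $q$, winding number $m-1$ of the projected end, and $c\cdot a=0$ from the $z^{-3}$ coefficient. Three points need fixing.

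First, completeness only gives $m\geq 1$, which is also condition (1) of the representation. The case $m=1$ is excluded by your own two observations: then $c=a$ is real, so $0=c\cdot c=|c|^2$, contradicting $c\neq 0$.

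Second, the step ``a graph is single-sheeted, so $m-1=1$'' needs $\pi_P\circ X$ to be injective on $\{|w|>R\}$. That follows from the graph property only if $X$ itself is injective on the punctured neighborhood, i.e.\ the end has multiplicity one. This is what ``embedded'' in the definition must mean, and you need to invoke it explicitly. If one only asks that the image $X(U\setminus\{q\})$ be a graph, the proposition is false. The map $z\mapsto (z^2,z^{-2})$ is a complete conformal minimal immersion of $\mathbb{C}\setminus\{0\}$ into $\mathbb{C}^2\simeq\mathbb{R}^4$ with vanishing residues. The image of its end at $z=\infty$ is the graph of $\zeta\mapsto 1/\zeta$ over $\{(\zeta,0)\}$, with bounded heights, yet $\phi_1=2z\,dz$ has a pole of order $3$ there. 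Once injectivity is available, $|X|\to\infty$ at $q$ places $X(\{|w|>R\})$ outside the exceptional compact set for $R$ large, and your Jordan-curve winding argument goes through.

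Third, identifying the plane $P'$ from the definition with $P=\textup{span}_{\mathbb{R}}\{u,v\}$ is genuinely needed, not a technicality. The map $w\mapsto(w^2,w)$ is an embedded end with $m=3$ that is a graph over $\{(0,\eta)\}$. Its projection to that plane has winding number $1$, not $m-1$, and only the asymptotic condition excludes it.

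For this identification the position vector is more direct than the Gauss map. Your expansion shows that $X/|X|$ sweeps out the whole unit circle of $P$ along $|w|=R\to\infty$. Asymptoticity to $P'$ forces every such limit direction into $P'$, so $P'=P$. Convergence of the tangent planes to $[c]$ would still require an extra argument linking the tangent planes of a graph to its asymptotic plane.

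Once $P'=P$, your residue step works as written. The relation $c\cdot a=0$ puts $a\log|z|$ in a height direction, where it equals $-|a|\log|x|+O(1)$ in the graph coordinate $x\in P$.
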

\begin{Ex}\normalfont
It follows directly from the Weierstrass data computed in Example \ref{EXDCa} that the doubly connected minimal surfaces $\mathcal{DC}_a$, including the Lagrangian catenoid $\Sigma_{LC}$ (corresponding to $a=0$), satisfy the conditions in (\ref{OrdRes}).
\end{Ex}

Assume that the immersion $X$ given by (\ref{gwr}) has embedded planar ends. Let $K$ denote the canonical bundle of $\Sigma_g$. The first condition in the above proposition implies that each meromorphic $1$-form $\phi_j$ belongs to $H^0(\Sigma_g, K\otimes [2q_1+2q_2+\cdots+2q_k])$. Using the Riemann-Roch formula \cite{D} together with the residue condition, we show that the Weierstrass data actually lie in a smaller subspace.

For any holomorphic line bundle $L\to\Sigma_g$, the Riemann-Roch formula states that
\begin{align}\label{RiemRoch}
\dim_{\mathbb{C}}H^0(\Sigma_g,L)-\dim_{\mathbb{C}}H^0(\Sigma_g, K\otimes L^{-1})=\deg L-g+1.
\end{align}
In particular, when $L$ is the trivial line bundle, we have $\dim_{\mathbb{C}}H^0(\Sigma_g, L)=1$ and $\deg L=0$. Substituting these into (\ref{RiemRoch}), we obtain $\dim_{\mathbb{C}}H^0(\Sigma_g, K)=g$. Similarly, taking $L=K$ yields $\deg K=2g-2$. For $L=[-2q_j]$, we obtain
\begin{align*}
0-\dim_{\mathbb{C}}H^0(\Sigma_g, K\otimes [2q_j])=-2-g+1,
\end{align*}
and hence $\dim_{\mathbb{C}}H^0(\Sigma_g,K\otimes [2q_j])=g+1$. 

Applying (\ref{RiemRoch}) successively to line bundles of the form $L=[-2q_{i_1}-2q_{i_2}-\cdots -2q_{i_j}]$, we deduce that
\begin{align}\label{dimcomp}
\dim_{\mathbb{C}}H^0(\Sigma_g, K\otimes [2q_{i_1}+2q_{i_2}+\cdots +2q_{i_j}])=g+2j-1
\end{align}
for any subset $\{i_1,i_2,\cdots, i_j\}\subseteq \{1,2,\cdots,k\}$.

For each $1\leq j\leq k$, choose a meromorphic $1$-form $\eta_j$ such that
\begin{align}\label{ETA}
\eta_j\in H^0(\Sigma_g, K\otimes [2q_j])\setminus H^0(\Sigma_g,K).
\end{align}
Since $\eta_j$ has a pole only at $q_j$, its residues at all other points must vanish. The residue formula then implies that
\begin{align}\label{ResEta}
\text{res}_{q_j}\eta_j=0,
\end{align}
and hence $\text{ord}_{q_j}\eta_j=-2$. 

Next, for $2\leq j\leq k$, choose meromorphic $1$-forms $\eta_{1j}$ satisfying
\begin{align}\label{ETAETAJ}
\eta_{1j}\in H^0(\Sigma_g, K\otimes [2q_1+2q_j])\setminus \left(H^0(\Sigma_g,K)\oplus \mathbb{C}\eta_1\oplus\mathbb{C}\eta_j\right).
\end{align}
By definition, $\eta_{1j}$ can have poles only at $q_1$ and $q_j$, and its residues vanish at all other points.

\begin{lemma}\label{ResEta1}
The residues of $\eta_{1j}$ at $q_1$ and $q_j$ are both nonzero.
\end{lemma}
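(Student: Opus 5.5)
By the residue theorem, the residues of $\eta_{1j}$ satisfy $\text{res}_{q_1}\eta_{1j}+\text{res}_{q_j}\eta_{1j}=0$, since $\eta_{1j}$ has poles only at $q_1$ and $q_j$. It therefore suffices to show that they are not both zero. We argue by contradiction and assume $\text{res}_{q_1}\eta_{1j}=\text{res}_{q_j}\eta_{1j}=0$.

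The plan is to peel off the principal parts using $\eta_1$ and $\eta_j$. First, by (\ref{ResEta}), $\eta_1$ has a pole of order exactly $2$ at $q_1$. So in a local coordinate $w$ centered at $q_1$ we may write $\eta_1=(c_1w^{-2}+O(1))dw$ with $c_1\neq0$. Under the contradiction hypothesis, $\eta_{1j}=(a w^{-2}+O(1))dw$ near $q_1$ for some $a\in\mathbb{C}$. Then $\eta_{1j}-\frac{a}{c_1}\eta_1$ is holomorphic at $q_1$, and its only possible pole is at $q_j$.

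Next we apply the same step at $q_j$, using $\eta_j$, whose pole at $q_j$ has order exactly $2$. Subtracting $\eta_1$ does not change the behaviour at $q_j$, since $\eta_1$ is holomorphic there. Hence the principal part of $\eta_{1j}-\frac{a}{c_1}\eta_1$ at $q_j$ is still $b\,u^{-2}du$ (with no residue) in a coordinate $u$ centered at $q_j$. Subtracting $\frac{b}{c_j}\eta_j$ then yields a form that is holomorphic everywhere. This form therefore lies in $H^0(\Sigma_g,K)$, and so
\begin{align*}
\eta_{1j}\in H^0(\Sigma_g,K)\oplus\mathbb{C}\eta_1\oplus\mathbb{C}\eta_j,
\end{align*}
which contradicts the choice (\ref{ETAETAJ}).

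The only point needing care is that the Laurent coefficients of order $-2$ scale as described when we change to the coordinates $w$ and $u$. Subtracting a multiple of a form with an exact double pole and zero residue removes the entire principal part, precisely because all residues vanish under the contradiction hypothesis. Consequently the residues are nonzero and opposite in sign.
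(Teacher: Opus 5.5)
Your proof is correct and follows the paper's argument. You use the residue theorem to show that the two residues vanish together, and if they did vanish then $\eta_{1j}$ would lie in $H^0(\Sigma_g,K)\oplus\mathbb{C}\eta_1\oplus\mathbb{C}\eta_j$, contradicting its choice. You also spell out the principal-part subtraction behind this membership, which the paper leaves implicit.
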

\begin{proof}
The residue formula implies that
\begin{align*}
\text{res}_{q_1}\eta_{1j}+\text{res}_{q_j}\eta_{1j}=0.
\end{align*}
If $\text{res}_{q_1}\eta_{1j}=0$, then it follows that $\text{res}_{q_j}\eta_{1j}=0$, which would imply
\begin{align*}
\eta_{1j}\in H^0(\Sigma_g,K)\oplus \mathbb{C}\eta_1\oplus\mathbb{C}\eta_j.
\end{align*}
This contradicts the definition of $\eta_{1j}$.
\end{proof}
It follows from (\ref{dimcomp}) that $H^0(\Sigma_g, K\otimes [2q_1+2q_2+\cdots+2q_k])$ has dimension $g+2k-1$ and can be decomposed as
\begin{align}\label{Decomp1}
H^0(\Sigma_g, K)\oplus\left(\bigoplus_{1\leq j\leq k}\mathbb{C}\eta_j\right)\oplus\left(\bigoplus_{2\leq j\leq k}\mathbb{C}\eta_{1j}\right).
\end{align}

\begin{lemma}\label{NoEta1}
Suppose that the Weierstrass data $\phi_1, \phi_2,\cdots,\phi_N$ satisfy (\ref{OrdRes}). Then, for every $1\leq i\leq N$, 
\begin{align*}
\phi_i\in H^0(\Sigma_g, K)\oplus\left(\bigoplus_{1\leq j\leq k}\mathbb{C}\eta_j\right).
\end{align*}
\end{lemma}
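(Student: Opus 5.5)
Each $\phi_i$ satisfies $\textup{ord}_{q_j}\phi_i\geq-2$ for every $j$ by (\ref{OrdRes}). Hence it lies in $H^0(\Sigma_g, K\otimes[2q_1+\cdots+2q_k])$, and by the decomposition (\ref{Decomp1}) we can write
\begin{align*}
\phi_i=\omega+\sum_{j=1}^k a_j\eta_j+\sum_{j=2}^k b_j\eta_{1j},
\end{align*}
with $\omega\in H^0(\Sigma_g,K)$ and $a_j,b_j\in\mathbb{C}$. It therefore suffices to show that $b_j=0$ for all $2\leq j\leq k$. The tool for this is the vanishing-residue condition in (\ref{OrdRes}).

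Take residues at $q_j$ for a fixed $j\geq 2$. The form $\omega$ is holomorphic, so its residue is zero. Each $\eta_l$ has vanishing residue at $q_j$: for $l=j$ this is (\ref{ResEta}), and for $l\neq j$ the form $\eta_l$ is holomorphic near $q_j$. For $l\neq j$ with $l\geq2$, the form $\eta_{1l}$ has poles only at $q_1$ and $q_l$, so it is holomorphic at $q_j$ and contributes nothing there. We are left with
\begin{align*}
0=\textup{res}_{q_j}\phi_i=b_j\,\textup{res}_{q_j}\eta_{1j}.
\end{align*}
By Lemma \ref{ResEta1}, $\textup{res}_{q_j}\eta_{1j}\neq0$, so $b_j=0$.

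Since $j\geq 2$ was arbitrary, every $b_j$ vanishes, and $\phi_i$ lies in $H^0(\Sigma_g,K)\oplus\bigoplus_j\mathbb{C}\eta_j$. The residue condition at $q_1$ is not needed; it follows automatically from the residue theorem. There is no real obstacle here. The one point to check is that (\ref{Decomp1}) is indeed a direct-sum decomposition. This holds by the dimension count (\ref{dimcomp}) together with the successive choices (\ref{ETA}) and (\ref{ETAETAJ}), which the paper takes as given.
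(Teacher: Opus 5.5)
Your proposal is correct and is essentially the paper's own proof: expand $\phi_i$ in the decomposition (\ref{Decomp1}), then take residues at each $q_j$ with $j\geq 2$ to reduce to $0=b_j\,\textup{res}_{q_j}\eta_{1j}$, and conclude $b_j=0$ from Lemma \ref{ResEta1}. Your side remarks are accurate: the residue condition at $q_1$ is never used, and (\ref{Decomp1}) is a genuine direct sum by the dimension count.
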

\begin{proof}
Let $\phi$ be one of the meromorphic $1$-forms $\phi_i$. By (\ref{Decomp1}), $\phi$ can be expressed as
\begin{align*}
\phi=\omega+\sum_{j=1}^kr_j\eta_j+\sum_{j=2}^ks_j\eta_{1j}
\end{align*}
for some $\omega\in H^0(\Sigma_g, K)$ and $r_1, \cdots, r_k, s_2,\cdots,s_k\in\mathbb{C}$. For $j\geq 2$, the residue of $\phi$ at $q_j$ vanishes by (\ref{OrdRes}). On the other hand, by the definition (\ref{ETA}) together with (\ref{ResEta}), the residues of all $\eta_i$ vanish. Moreover, for $i\neq j$, we have
\begin{align*}
\text{res}_{q_j}\eta_{1i}=0
\end{align*}
by the definition (\ref{ETAETAJ}). Therefore, we obtain
\begin{align*}
0=\text{res}_{q_j}\phi=s_j\text{res}_{q_j}\eta_{1j}.
\end{align*}
By Lemma \ref{ResEta1}, $\text{res}_{q_j}\eta_{1j}\neq0$, and thus $s_j=0$. This holds for all $j\geq2$, completing the proof.
\end{proof}

In particular, if the minimal immersion $X$ given by (\ref{gwr}) has three embedded planar ends in $\mathbb{R}^4$, the preceding lemma allows us to express the Weierstrass data in terms of the generalized Gauss map at the ends.
\begin{prop}\label{prop27}
Let $X:\Sigma_g\setminus\{q_1,q_2,q_3\}\to\mathbb{R}^4$ be a complete minimal immersion with finite total curvature and genus $g$, represented by (\ref{gwr}). Suppose that $X$ has embedded planar ends, and let $\eta_1, \eta_2, \eta_3$ be chosen as in (\ref{ETA}). If the generalized Gauss map at each end $q_j$ is given by
\begin{align*}
\Phi(q_j)=[a_j, b_j, c_j, d_j]\in\mathbb{P}^3,
\end{align*}
then there exist nonzero complex numbers $\alpha, \beta, \gamma$ and holomorphic $1$-forms 
\begin{align*}
\omega_1,\omega_2,\omega_3,\omega_4\in H^0(\Sigma_g, K),
\end{align*} 
such that the Weierstrass data take the form:
\begin{align*}
&\phi_1=\omega_1+\alpha a_1\eta_1+\beta a_2\eta_2+\gamma a_3\eta_3,\\
&\phi_2=\omega_2+\alpha b_1\eta_1+\beta b_2\eta_2+\gamma b_3\eta_3,\\
&\phi_3=\omega_3+\alpha c_1\eta_1+\beta c_2\eta_2+\gamma c_3\eta_3,\\
&\phi_4=\omega_4+\alpha d_1\eta_1+\beta d_2\eta_2+\gamma d_3\eta_3.
\end{align*}
\end{prop}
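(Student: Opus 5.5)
By Lemma \ref{NoEta1}, each $\phi_i$ lies in $H^0(\Sigma_g,K)\oplus\mathbb{C}\eta_1\oplus\mathbb{C}\eta_2\oplus\mathbb{C}\eta_3$. Hence we may write
\begin{align*}
\phi_i=\omega_i+x_i\eta_1+y_i\eta_2+z_i\eta_3,\qquad i=1,2,3,4,
\end{align*}
with $\omega_i\in H^0(\Sigma_g,K)$ and $x_i,y_i,z_i\in\mathbb{C}$. The plan is to read off the leading behaviour of each $\phi_i$ at the end $q_j$ and compare it with the value of the generalized Gauss map there.

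Fix the end $q_1$ and a local coordinate $t$ centered at $q_1$. Near $q_1$, the forms $\omega_i$, $\eta_2$ and $\eta_3$ are holomorphic, since $\eta_j$ has poles only at $q_j$. By (\ref{ResEta}) and (\ref{ETA}), $\eta_1=(c\,t^{-2}+O(1))\,dt$ with $c\neq0$. Therefore
\begin{align*}
\phi_i=\left(c\,x_i\,t^{-2}+O(1)\right)dt .
\end{align*}
By (\ref{OrdRes}), at least one $\phi_i$ has order $-2$ at $q_1$, so the vector $(x_1,x_2,x_3,x_4)$ is nonzero. The generalized Gauss map at $q_1$ is the limit of $[\phi_1,\dots,\phi_4]$ as $t\to0$, and multiplying through by $t^2$ gives
\begin{align*}
\Phi(q_1)=[x_1,x_2,x_3,x_4].
\end{align*}
Since $\Phi(q_1)=[a_1,b_1,c_1,d_1]$ in $\mathbb{P}^3$, there is a nonzero $\alpha$ with $(x_1,x_2,x_3,x_4)=\alpha(a_1,b_1,c_1,d_1)$. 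The same argument at $q_2$ and $q_3$ gives nonzero $\beta$ and $\gamma$ with $(y_i)=\beta(a_2,b_2,c_2,d_2)$ and $(z_i)=\gamma(a_3,b_3,c_3,d_3)$. This yields the stated form of the Weierstrass data.

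The only delicate point is justifying that $\Phi(q_j)$ equals the limit above. Here $\Phi$ extends holomorphically across $q_j$ by removing the common factor $t^{-2}$, which is exactly the minimal pole order. The nonvanishing of $\alpha,\beta,\gamma$ is precisely the condition $\min_i\textup{ord}_{q_j}\phi_i=-2$ in (\ref{OrdRes}).
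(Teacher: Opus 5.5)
Your proposal is correct and follows essentially the same route as the paper: decompose via Lemma \ref{NoEta1}, then read off $\Phi(q_j)$ from the coefficient of $\eta_j$, which is the only form with a pole at $q_j$. Your local expansion at $q_j$ makes explicit a step the paper states in one line. The only addition needed is to cite Proposition \ref{PropOrdRes}, which gives (\ref{OrdRes}), before invoking Lemma \ref{NoEta1}.
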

\begin{proof}
Since the minimal immersion $X$ has embedded planar ends, Proposition \ref{PropOrdRes} implies that the Weierstrass data satisfy (\ref{OrdRes}). Therefore, by Lemma \ref{NoEta1}, they can be written uniquely as
\begin{align*}
&\phi_1=\omega_1+r_1\eta_1+r_2\eta_2+r_3\eta_3,\\
&\phi_2=\omega_2+s_1\eta_1+s_2\eta_2+s_3\eta_3,\\
&\phi_3=\omega_3+t_1\eta_1+t_2\eta_2+t_3\eta_3,\\
&\phi_4=\omega_4+u_1\eta_1+u_2\eta_2+u_3\eta_3,
\end{align*}
for some holomorphic $1$-forms $\omega_1,\omega_2,\omega_3,\omega_4\in H^0(\Sigma_g, K)$ and complex numbers $r_i,s_i,t_i,u_i\in\mathbb{C}$ ($i=1,2,3$). 

The generalized Gauss map at each end $q_j$ is determined by the coefficients of $\eta_j$, since only $\eta_j$ has a pole at $q_j$. For instance, in the above expression, the generalized Gauss map at $q_1$ is given by
\begin{align*}
\Phi(q_1)=[r_1,s_1,t_1,u_1]\in\mathbb{P}^3.
\end{align*}
Thus, if the generalized Gauss map at $q_1$ is assumed to be $\Phi(q_1)=[a_1,b_1,c_1,d_1]\in\mathbb{P}^3$, then there exists a nonzero complex number $\alpha$ such that
\begin{align*}
r_1=\alpha a_1,\quad s_1=\alpha b_1,\quad t_1=\alpha c_1,\quad u_1=\alpha d_1.
\end{align*}
Similarly, there exist nonzero complex numbers $\beta$ and $\gamma$ satisfying the analogous relations at $q_2$ and $q_3$, respectively. This establishes the desired expression.
\end{proof}

\section{Symmetry Group}\label{sym}
\setcounter{equation}{0}
Let $S_g\subset \mathbb{R}^4$ be a complete, oriented, embedded minimal surface with finite total curvature and genus $g$. Suppose that $S_g$ has three embedded planar ends whose asymptotic planes are parallel to the triple $(Q_1(a), Q_2(a, r_0), Q_3)$ (see Section \ref{subsec22} for the notation). When the context is clear, we omit $a$ and $r_0$ and simply write this as $(Q_1, Q_2, Q_3)$.
\subsection{Basic properties of the symmetry group $\mathfrak{Sym}(S_g)$}
In this paper, a symmetry of $S_g$ refers to a rigid motion $R=(A, b)$ of $\mathbb{R}^4$ such that $R(S_g)=S_g$, where $A\in O(4)$ and $b\in\mathbb{R}^4$. Here $R$ acts on $\mathbb{R}^4$ by $R(x)=Ax+b$. The set of all symmetries of $S_g$ forms a group, which we denote by $\mathfrak{Sym}(S_g)$.

As $S_g$ is not totally geodesic and has finite total curvature, the only possible translation in $\mathfrak{Sym}(S_g)$ is the identity. Hence, if $(I,b)\in\mathfrak{Sym}(S_g)$, where $I\in O(4)$ is the identity, then $b=0$. As a consequence, we obtain the following uniqueness property of the translation part.

\begin{lemma}\label{Lem41}
If $(A, b_1)\in\mathfrak{Sym}(S_g)$ and $(A, b_2)\in\mathfrak{Sym}(S_g)$, then $b_1=b_2$.
\end{lemma}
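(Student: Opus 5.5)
The plan is to use the group structure of $\mathfrak{Sym}(S_g)$ together with the observation just made: the only translation preserving $S_g$ is the identity.

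Given $(A,b_1)$ and $(A,b_2)$ in $\mathfrak{Sym}(S_g)$, consider the composition $(A,b_2)\circ(A,b_1)^{-1}$. The inverse of $(A,b_1)$ is the map $x\mapsto A^{-1}(x-b_1)$, which corresponds to $(A^{-1},-A^{-1}b_1)$. Composing with $x\mapsto Ax+b_2$ gives
\begin{align*}
x\mapsto A\bigl(A^{-1}(x-b_1)\bigr)+b_2=x+(b_2-b_1),
\end{align*}
so the composition is the pair $(I,b_2-b_1)$.

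Since $\mathfrak{Sym}(S_g)$ is a group, this composition is itself a symmetry of $S_g$. By the preceding remark, a translation $(I,b)\in\mathfrak{Sym}(S_g)$ forces $b=0$. Hence $b_2-b_1=0$, that is, $b_1=b_2$.

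The only substantive input is the preceding claim that $S_g$ admits no nontrivial translation, which I take as given. If one wanted to justify it: a nonzero translation $\tau$ would generate the infinite group $\{\tau^n\}$ of isometries of $S_g$. Then $S_g$ would contain the orbit of a point of nonzero curvature, giving infinitely many disjoint isometric geodesic disks with total curvature bounded away from $0$. This contradicts finite total curvature, unless the curvature vanishes identically, i.e. $S_g$ is a plane, which is excluded.
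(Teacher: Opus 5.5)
Your proof is correct and is essentially the paper's argument. The paper composes in the opposite order, obtaining $(A,b_1)^{-1}\circ(A,b_2)=(I,A^{-1}b_2-A^{-1}b_1)$, and then uses the absence of nontrivial translations exactly as you do. Your supplementary sketch of why no nontrivial translation can exist is also sound.
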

\begin{proof}
We compute
\begin{align*}
(A, b_1)^{-1}\circ (A, b_2)=(A^{-1}, -A^{-1}b_1)\circ (A, b_2)=(I, A^{-1}b_2-A^{-1}b_1)\in\mathfrak{Sym}(S_g).
\end{align*}
Since the translation in $\mathfrak{Sym}(S_g)$ must be the identity, it follows that $A^{-1}b_2-A^{-1}b_1=0$. Therefore $b_1=b_2$.
\end{proof}

\subsection{Permutations of the asymptotic planes}
If $(A, b)\in\mathfrak{Sym}(S_g)$, then $A\in O(4)$ induces a permutation on the set of three planes $\{Q_1, Q_2, Q_3\}$. Thus, the possible symmetries are constrained by the relative positions of these planes. To investigate this further, we define
\begin{align*}
\Theta_{V,W}:=\sup\left\{\langle v, w\rangle\ |\ v\in V,\ w\in W,\ \langle v,v\rangle=\langle w,w\rangle=1\right\},
\end{align*}
for $2$-dimensional $\mathbb{R}$-vector subspaces $V, W\subset\mathbb{R}^4$. Here, $\langle\cdot,\cdot\rangle$ denotes the Euclidean inner product on $\mathbb{R}^4$. It is clear that 
\begin{align*}
\Theta_{V,W}=\Theta_{W,V}
\end{align*}
and we have the following lemma.
\begin{lemma}\label{lem42}
For every $A\in O(4)$, we have $\Theta_{V,W}=\Theta_{AV,AW}$.
\end{lemma}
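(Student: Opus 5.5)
The plan is to use directly the fact that an orthogonal transformation $A\in O(4)$ preserves the Euclidean inner product and maps the unit sphere of $V$ bijectively onto the unit sphere of $AV$. Concretely, we will show that the two sets over which the suprema are taken coincide:
\begin{align*}
\left\{\langle v, w\rangle\ |\ v\in V,\ w\in W,\ \langle v,v\rangle=\langle w,w\rangle=1\right\}=\left\{\langle v', w'\rangle\ |\ v'\in AV,\ w'\in AW,\ \langle v',v'\rangle=\langle w',w'\rangle=1\right\}.
\end{align*}
Equality of the suprema then follows immediately.

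For the inclusion ``$\subseteq$'', take unit vectors $v\in V$ and $w\in W$ and set $v'=Av$, $w'=Aw$. Then $v'\in AV$ and $w'\in AW$. Since $A^TA=I$, we have $\langle Av,Av\rangle=\langle v,v\rangle=1$, and likewise $\langle Aw,Aw\rangle=1$. Moreover $\langle v',w'\rangle=\langle Av,Aw\rangle=\langle v,w\rangle$, so every element of the left set lies in the right set.

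For the inclusion ``$\supseteq$'', apply the same argument to $A^{-1}=A^T\in O(4)$, which sends unit vectors of $AV$ and $AW$ back to unit vectors of $V$ and $W$. We note that $AV$ and $AW$ are again $2$-dimensional subspaces because $A$ is invertible, so $\Theta_{AV,AW}$ is defined.

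There is no real obstacle here. The only point needing care is the bookkeeping that $A$ restricts to a bijection between the unit vectors of $V$ and those of $AV$, which is exactly what the argument via $A^{-1}$ provides.
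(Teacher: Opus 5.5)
Your proposal is correct and follows essentially the same route as the paper: both use that $A$ preserves the inner product and then apply the same argument to $A^{-1}$ to get the reverse inequality. The only difference is cosmetic. The paper picks unit vectors realizing $\Theta_{V,W}$, which is tacitly justified by compactness, while your identification of the two sets of inner products avoids needing the supremum to be attained.
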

\begin{proof}
Choose unit vectors $v\in V$ and $w\in W$ realizing $\Theta_{V,W}$. Since $A\in O(4)$ preserves the inner product, 
\begin{align*}
\Theta_{V,W}=\langle v,w\rangle=\langle Av, Aw\rangle\leq\Theta_{AV,AW}.
\end{align*}
Applying the above inequality to $A^{-1}$ and the subspaces $AV$ and $AW$, we obtain
\begin{align*}
\Theta_{AV,AW}\leq \Theta_{A^{-1}AV,A^{-1}AW}=\Theta_{V,W}.
\end{align*}
Hence, $\Theta_{V,W}=\Theta_{AV,AW}$.
\end{proof}

Now, by calculating the values of $\Theta$ between $Q_1(=Q_1(a))$, $Q_2(=Q_2(a, r_0))$, and $Q_3$, we obtain 
\begin{align*}
&\Theta_{Q_1,Q_2}=\frac{1}{\sqrt{1+|a|^2}}\cdot\frac{{r_0}^2\left(1+|a|^2\right)+|a|}{\sqrt{\left(1+{r_0}^2|a|\right)^2+{r_0}^4}},\\
&\Theta_{Q_2,Q_3}=\frac{1+{r_0}^2|a|}{\sqrt{\left(1+{r_0}^2|a|\right)^2+{r_0}^4}},\\
&\Theta_{Q_1,Q_3}=\frac{|a|}{\sqrt{1+|a|^2}}.
\end{align*}
Details of these calculations can be found in $\textbf{Appendix\ B}$. These formulas imply that
\begin{align*}
\Theta_{Q_1,Q_3}\neq \Theta_{Q_2,Q_3},\quad \Theta_{Q_1,Q_2}\neq \Theta_{Q_1,Q_3},
\end{align*}
and
\begin{align*}
\Theta_{Q_1,Q_2}=\Theta_{Q_2,Q_3}\ \iff\ {r_0}^2=\frac{1}{\sqrt{1+|a|^2}}.
\end{align*}

Combining these observations with Lemma \ref{lem42}, we conclude that the possible permutations of $\{Q_1, Q_2, Q_3\}$ induced by $(A,b)\in \mathfrak{Sym}(S_g)$ are as follows:
\begin{align*}
Q_1\stackrel{A}{\to}Q_1,\quad Q_2\stackrel{A}{\to}Q_2,\quad Q_3\stackrel{A}{\to}Q_3,
\end{align*}
or
\begin{align*} 
Q_1\stackrel{A}{\to}Q_3,\quad Q_2\stackrel{A}{\to}Q_2,\quad Q_3\stackrel{A}{\to}Q_1, 
\end{align*}
where the second case can occur only if
\begin{align*}
{r_0}^2=\frac{1}{\sqrt{1+|a|^2}}.
\end{align*}
We denote by $\mathfrak{Sym}^{id}(S_g)$ the set of symmetries whose linear parts induce the identity permutation on $\{Q_1,Q_2,Q_3\}$, as described in the first case. Since the symmetries fall into the two cases above, we obtain the following lemma.
\begin{lemma}\label{Lem44}
$\mathfrak{Sym}^{id}(S_g)$ is a subgroup of $\mathfrak{Sym}(S_g)$ of index at most $2$.
\end{lemma}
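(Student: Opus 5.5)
The plan is to realize $\mathfrak{Sym}^{id}(S_g)$ as the kernel of a group homomorphism into a group of order at most $2$. Every symmetry $(A,b)\in\mathfrak{Sym}(S_g)$ maps $S_g$ to itself. It therefore maps ends to ends, and the linear part $A$ maps asymptotic planes to asymptotic planes. Since the asymptotic planes of $S_g$ are parallel to $Q_1,Q_2,Q_3$, $A$ permutes the set $\{Q_1,Q_2,Q_3\}$ of linear subspaces. This gives a map
\begin{align*}
\rho:\mathfrak{Sym}(S_g)\to \mathfrak{S}_3,\qquad (A,b)\mapsto \text{the permutation of }\{Q_1,Q_2,Q_3\}\text{ induced by }A.
\end{align*}

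The first step is to check that $\rho$ is a homomorphism. The composition $(A_1,b_1)\circ(A_2,b_2)=(A_1A_2,A_1b_2+b_1)$ has linear part $A_1A_2$. The permutation induced by $A_1A_2$ on the planes is the composition of the permutations induced by $A_1$ and $A_2$. The identity symmetry induces the identity permutation. So $\rho$ is a homomorphism, and by definition $\mathfrak{Sym}^{id}(S_g)=\ker\rho$, which is a (normal) subgroup.

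The second step is to bound the image. By the case analysis preceding the lemma, which uses Lemma \ref{lem42} and the computed values of $\Theta$, the image of $\rho$ is contained in $\{\mathrm{id},(1\,3)\}$. Indeed, $A$ preserves $\Theta$ between pairs of planes, and the inequalities $\Theta_{Q_1,Q_3}\neq\Theta_{Q_2,Q_3}$ and $\Theta_{Q_1,Q_2}\neq\Theta_{Q_1,Q_3}$ rule out every permutation moving $Q_2$. The only nontrivial possibility left is the transposition swapping $Q_1$ and $Q_3$. This set has at most two elements, so $[\mathfrak{Sym}(S_g):\mathfrak{Sym}^{id}(S_g)]=|\mathrm{im}\,\rho|\leq 2$.

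The only point requiring care is the well-definedness of $\rho$, namely that $A$ genuinely sends each $Q_i$ onto some $Q_j$ as linear subspaces. This holds because a rigid motion maps an end asymptotic to an affine plane parallel to $Q_i$ onto an end asymptotic to a plane parallel to $AQ_i$. That image end must be one of the ends of $S_g$, so $AQ_i$ is one of the three planes. Distinct ends go to distinct ends, and the three planes are pairwise distinct, since $\Theta<1$ for each pair. Hence the induced map on the planes is a bijection, i.e. a genuine permutation.
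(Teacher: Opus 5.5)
Your proposal is correct and follows the paper's argument. The paper likewise uses Lemma \ref{lem42} and the computed values of $\Theta$ to show that the only permutations of $\{Q_1,Q_2,Q_3\}$ induced by linear parts of symmetries are the identity and the transposition of $Q_1$ and $Q_3$, and reads off the index bound from this; you simply make explicit the homomorphism to the permutation group and the well-definedness of the induced permutation, which the paper leaves implicit.
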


\subsection{Symmetries in $\mathfrak{Sym}^{id}(S_g)$}
Let $(A, b)\in\mathfrak{Sym}^{id}(S_g)$. Then, $Q_3$ is an $A$-invariant subspace. Since $A\in O(4)$, its orthogonal complement ${Q_3}^{\perp}$ is also $A$-invariant. The subspace $Q_3$ is generated by $(0, 0, 1, 0)\in\mathbb{R}^4$ and $(0, 0, 0, 1)\in\mathbb{R}^4$, while the subspace ${Q_3}^{\perp}$ is generated by $(1, 0, 0, 0)\in\mathbb{R}^4$ and $(0, 1, 0, 0)\in\mathbb{R}^4$. Therefore, with respect to the standard basis of $\mathbb{R}^4$, $A$ can be expressed as
\begin{align}\label{eqn441}
A=
\begin{pmatrix}
A_1 & O\\
O & A_2
\end{pmatrix},
\end{align}
where $A_1$ and $A_2$ are $2\times 2$ real orthogonal matrices. Recall that every $2\times 2$ real orthogonal matrix has one of the following forms:
\begin{align*}
\mathcal{S}_{\lambda}:=
\begin{pmatrix}
\cos\lambda & -\sin\lambda\\
\sin\lambda & \cos\lambda
\end{pmatrix},\quad 
\mathcal{T}_{\lambda}:=
\begin{pmatrix}
\cos\lambda & -\sin\lambda\\
-\sin\lambda & -\cos\lambda
\end{pmatrix},
\end{align*}
for $\lambda\in\mathbb{R}$. These matrices satisfy 
\begin{align*}
\mathcal{S}_{\lambda}\mathcal{T}_{\mu}=\mathcal{T}_{\mu-\lambda},\quad \mathcal{T}_{\mu}\mathcal{S}_{\lambda}=\mathcal{T}_{\mu+\lambda},\quad \mathcal{S}_{\lambda}\mathcal{S}_{\mu}=\mathcal{S}_{\lambda+\mu},\quad \mathcal{T}_{\lambda}\mathcal{T}_{\mu}=\mathcal{S}_{\mu-\lambda}.
\end{align*}

Using the $A$-invariance of the other two planes $Q_1(a)$ and $Q_2(a, r_0)$, we obtain further restrictions on the linear part $A\in O(4)$.
\begin{lemma}\label{formA}
If $(A, b)\in\mathfrak{Sym}^{id}(S_g)$, then, with respect to the standard basis of $\mathbb{R}^4$, the linear part $A$ has one of the following forms.
\begin{itemize}
\item[(1)] If $a=0$,
\begin{align*}
A=\begin{pmatrix}
\mathcal{S}_{\lambda} & O\\
O & \mathcal{S}_{-\lambda}
\end{pmatrix}\quad\text{or}\quad
\begin{pmatrix}
\mathcal{T}_{\lambda} & O\\
O & \mathcal{T}_{-\lambda}
\end{pmatrix},\quad \lambda\in\mathbb{R}.
\end{align*}
\item[(2)] If $a\neq 0$,
\begin{align*}
A=\pm
\begin{pmatrix}
I & O\\
O & I
\end{pmatrix}\quad\text{or}\quad
\pm
\begin{pmatrix}
\mathcal{T}_{\theta_a} & O\\
O & \mathcal{T}_{-\theta_a}
\end{pmatrix}.
\end{align*}
\end{itemize}
In particular, if $a\neq 0$, then $\left|\mathfrak{Sym}^{id}(S_g)\right|\leq 4$.
\end{lemma}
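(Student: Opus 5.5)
The plan is to turn the invariance of $Q_1(a)$ and $Q_2(a,r_0)$ into conditions on the blocks $A_1,A_2$ of (\ref{eqn441}), working in complex notation. Identify ${Q_3}^{\perp}$ and $Q_3$ each with $\mathbb{C}$, so a point of $\mathbb{R}^4$ is a pair $(z,w)\in\mathbb{C}^2$ and $A(z,w)=(A_1z,A_2w)$. A direct check on the basis vectors $1$ and $i$ gives
\begin{align*}
\mathcal{S}_\lambda z=e^{i\lambda}z,\qquad \mathcal{T}_\lambda z=e^{-i\lambda}\bar z .
\end{align*}
Write $m_a$ for multiplication by $a$ and $C$ for complex conjugation, so that $C=\mathcal{T}_0$. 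In this notation $Q_1(a)$ is the graph of $m_a$ and $Q_2(a,r_0)$ is the graph of $m_a+r_0^{-2}C$. Since $(A,b)\in\mathfrak{Sym}^{id}(S_g)$, the matrix $A$ maps each of these graphs onto itself.

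\textbf{Step 1: invariance of the two graphs.} $A$ maps the graph of a real-linear map $L$ onto itself exactly when $A_2L=LA_1$. Applied to $Q_1(a)$ and $Q_2(a,r_0)$ this gives, for all $z\in\mathbb{C}$,
\begin{align*}
A_2(az)=aA_1z,\qquad A_2\left(az+r_0^{-2}\bar z\right)=aA_1z+r_0^{-2}\,\overline{A_1z}.
\end{align*}
Subtracting and using $r_0^{-2}\neq0$ yields $A_2C=CA_1$, that is, $A_2=CA_1C$. This uses only $Q_2$ and $Q_3$, so it also covers $a=0$, where the first identity is automatic.

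\textbf{Step 2: the case $a=0$.} Compute $CA_1C$ in the two possible forms of $A_1$. For a rotation, $C\mathcal{S}_\lambda C z=\overline{e^{i\lambda}\bar z}=e^{-i\lambda}z$, so $A_2=\mathcal{S}_{-\lambda}$. For a reflection, $C\mathcal{T}_\lambda C z=\overline{e^{-i\lambda}z}=e^{i\lambda}\bar z$, so $A_2=\mathcal{T}_{-\lambda}$. These are exactly the forms in (1).

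\textbf{Step 3: the case $a\neq0$.} Insert these forms into the $Q_1$ condition $A_2m_a=m_aA_1$.
\begin{itemize}
\item[(i)] For $A_1=\mathcal{S}_\lambda$ we need $e^{-i\lambda}az=ae^{i\lambda}z$. Since $a\neq0$ this means $e^{2i\lambda}=1$, so $A=\pm I$.
\item[(ii)] For $A_1=\mathcal{T}_\lambda$ we need $e^{i\lambda}\,\overline{az}=ae^{-i\lambda}\bar z$, that is, $e^{2i\lambda}=a/\bar a=e^{2i\theta_a}$. Hence $\lambda\in\{\theta_a,\theta_a+\pi\}$. Since $\mathcal{T}_{\theta_a+\pi}=-\mathcal{T}_{\theta_a}$ and $\mathcal{T}_{-\theta_a-\pi}=-\mathcal{T}_{-\theta_a}$, this gives $A=\pm\operatorname{diag}(\mathcal{T}_{\theta_a},\mathcal{T}_{-\theta_a})$.
\end{itemize}

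\textbf{Step 4: the bound.} By Lemma \ref{Lem41}, the translation part $b$ is determined by the linear part $A$. The map $(A,b)\mapsto A$ on $\mathfrak{Sym}^{id}(S_g)$ is therefore injective, and Step 3 leaves at most four possible values of $A$. So $\left|\mathfrak{Sym}^{id}(S_g)\right|\leq4$ when $a\neq0$.

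The main obstacle is only bookkeeping: keeping the identifications $\mathcal{T}_\lambda z=e^{-i\lambda}\bar z$ and the sign conventions consistent. The structural point is that subtracting the $Q_1$ condition from the $Q_2$ condition isolates the anti-linear part, which forces $A_2=CA_1C$ regardless of $a$.
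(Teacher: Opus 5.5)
Your proof is correct and follows the paper's argument. The paper likewise turns the invariance of $Q_1(a)$ and $Q_2(a,r_0)$ into intertwining relations for the blocks $A_1,A_2$, subtracts them to isolate $A_2\mathcal{T}_0=\mathcal{T}_0A_1$, and then does the same case analysis. The only differences are your complex notation ($\mathcal{S}_\lambda z=e^{i\lambda}z$, $\mathcal{T}_\lambda z=e^{-i\lambda}\bar z$) and that, for $a\neq0$, you apply $A_2=\mathcal{T}_0A_1\mathcal{T}_0$ before the $Q_1$ relation rather than after it.

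One small quibble: your aside that Step 1 ``uses only $Q_2$ and $Q_3$'' is literally true only when $a=0$, since the subtraction uses the $Q_1$ identity. Both identities hold for every element of $\mathfrak{Sym}^{id}(S_g)$, so the derivation itself is valid in all cases.
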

\begin{proof}
Since $Q_1(a)$ is an $A$-invariant subspace, the following relations hold for some $\alpha, \beta, \gamma, \omega\in\mathbb{R}$:
\begin{align*}
A\begin{pmatrix} 1\\0\\|a|\cos\theta_a\\|a|\sin\theta_a\end{pmatrix}=\alpha\begin{pmatrix} 1\\0\\|a|\cos\theta_a\\|a|\sin\theta_a\end{pmatrix}+\beta\begin{pmatrix} 0\\1\\-|a|\sin\theta_a\\|a|\cos\theta_a\end{pmatrix},
\end{align*}
and
\begin{align*}
A\begin{pmatrix} 0\\1\\-|a|\sin\theta_a\\|a|\cos\theta_a\end{pmatrix}=\gamma\begin{pmatrix} 1\\0\\|a|\cos\theta_a\\|a|\sin\theta_a\end{pmatrix}+\omega\begin{pmatrix} 0\\1\\-|a|\sin\theta_a\\|a|\cos\theta_a\end{pmatrix}.
\end{align*}
The two vectors above form a basis of $Q_1(a)$, as noted at the end of Section \ref{subsec22}. Using (\ref{eqn441}), these relations are equivalent to
\begin{align}\label{rel1}
|a|A_2\mathcal{S}_{\theta_a}=|a|\mathcal{S}_{\theta_a}A_1.
\end{align}

Similarly, the $A$-invariance of $Q_2(a, r_0)$ gives, for some $\tilde{\alpha}, \tilde{\beta}, \tilde{\gamma}, \tilde{\omega}\in\mathbb{R}$,
\begin{align*}
A\begin{pmatrix} 1\\0\\|a|\cos\theta_a+\frac{1}{{r_0}^2}\\|a|\sin\theta_a\end{pmatrix}=\tilde{\alpha}\begin{pmatrix} 1\\0\\|a|\cos\theta_a+\frac{1}{{r_0}^2}\\|a|\sin\theta_a\end{pmatrix}+\tilde{\beta}\begin{pmatrix} 0\\1\\-|a|\sin\theta_a\\|a|\cos\theta_a-\frac{1}{{r_0}^2}\end{pmatrix},
\end{align*}
and
\begin{align*}
A\begin{pmatrix} 0\\1\\-|a|\sin\theta_a\\|a|\cos\theta_a-\frac{1}{{r_0}^2}\end{pmatrix}=\tilde{\gamma}\begin{pmatrix} 1\\0\\|a|\cos\theta_a+\frac{1}{{r_0}^2}\\|a|\sin\theta_a\end{pmatrix}+\tilde{\omega}\begin{pmatrix} 0\\1\\-|a|\sin\theta_a\\|a|\cos\theta_a-\frac{1}{{r_0}^2}\end{pmatrix}.
\end{align*}
The two vectors above also form a basis of $Q_2(a, r_0)$. Using (\ref{eqn441}), these relations are equivalent to 
\begin{align}\label{rel2}
A_2\left(|a|\mathcal{S}_{\theta_a}+\frac{1}{{r_0}^2}\mathcal{T}_{0}\right)=\left(|a|\mathcal{S}_{\theta_a}+\frac{1}{{r_0}^2}\mathcal{T}_{0}\right)A_1.
\end{align}
Subtracting (\ref{rel1}) from (\ref{rel2}) gives
\begin{align}\label{rel3}
A_2\mathcal{T}_{0}=\mathcal{T}_{0}A_1.
\end{align}
Thus, if $(A, b)\in\mathfrak{Sym}^{id}(S_g)$, then $A_1$ and $A_2$ must satisfy (\ref{rel1}) and (\ref{rel3}).

When $a=0$, (\ref{rel1}) is automatically satisfied, leaving only (\ref{rel3}) to be imposed. Substituting $A_1=\mathcal{S}_{\lambda}$ into (\ref{rel3}), we obtain
\begin{align*}
A_2=\mathcal{T}_0 \mathcal{S}_\lambda \mathcal{T}_0^{-1}=\mathcal{T}_{0}\mathcal{S}_{\lambda}\mathcal{T}_{0}=\mathcal{T}_{0}\mathcal{T}_{-\lambda}=\mathcal{S}_{-\lambda}.
\end{align*}
Similarly, if $A_1=\mathcal{T}_{\lambda}$, then (\ref{rel3}) yields
\begin{align*}
A_2=\mathcal{T}_0 \mathcal{T}_\lambda \mathcal{T}_0^{-1}=\mathcal{T}_{0}\mathcal{T}_{\lambda}\mathcal{T}_{0}=\mathcal{T}_{0}\mathcal{S}_{-\lambda}=\mathcal{T}_{-\lambda}.
\end{align*}
Therefore, when $a=0$, $A$ has one of the two forms stated in the lemma.

When $a\neq 0$, (\ref{rel1}) gives 
\begin{align*}
A_2\mathcal{S}_{\theta_a}=\mathcal{S}_{\theta_a}A_1. 
\end{align*}
If $A_1=\mathcal{S}_{\lambda}$, then
\begin{align*}
A_2=\mathcal{S}_{\theta_a}\mathcal{S}_{\lambda}\mathcal{S}_{-\theta_a}=\mathcal{S}_{\lambda}.
\end{align*}
If $A_1=\mathcal{T}_{\lambda}$, then
\begin{align*}
A_2=\mathcal{S}_{\theta_a}\mathcal{T}_{\lambda}\mathcal{S}_{-\theta_a}=\mathcal{S}_{\theta_a}\mathcal{T}_{\lambda-\theta_a}=\mathcal{T}_{\lambda-2\theta_a}.
\end{align*}

Substituting $(A_1, A_2)=(\mathcal{S}_{\lambda}, \mathcal{S}_{\lambda})$ into (\ref{rel3}) yields 
\begin{align*}
\mathcal{T}_{-\lambda}=\mathcal{S}_{\lambda}\mathcal{T}_{0}=\mathcal{T}_{0}\mathcal{S}_{\lambda}=\mathcal{T}_{\lambda}.
\end{align*}
Hence $\lambda=n\pi$ for some $n\in\mathbb{Z}$, and therefore
\begin{align*}
A=\pm
\begin{pmatrix}
I & O\\
O & I
\end{pmatrix}.
\end{align*}
Similarly, substituting $(A_1, A_2)=(\mathcal{T}_{\lambda}, \mathcal{T}_{\lambda-2\theta_a})$ into (\ref{rel3}) gives
\begin{align*}
\mathcal{S}_{2\theta_a-\lambda}=\mathcal{T}_{\lambda-2\theta_a}\mathcal{T}_{0}=\mathcal{T}_{0}\mathcal{T}_{\lambda}=\mathcal{S}_{\lambda}.
\end{align*}
Hence $\lambda=\theta_a-n\pi$ for some $n\in\mathbb{Z}$, and thus
\begin{align*}
A=\pm
\begin{pmatrix}
\mathcal{T}_{\theta_a} & O\\
O & \mathcal{T}_{-\theta_a}
\end{pmatrix}.
\end{align*}
This proves the lemma.
\end{proof}
\begin{rmk}\label{formAAA}\normalfont
When $a=0$, any symmetry $(A,b)\in\mathfrak{Sym}(S_g)\setminus\mathfrak{Sym}^{id}(S_g)$ corresponding to the non-identity permutation exchanging $Q_1$ and $Q_3$ has a linear part $A$ obtained by left-multiplying one of the matrices in the lemma by 
\begin{align*}
\begin{pmatrix}
O& I\\ I& O
\end{pmatrix}.
\end{align*}
In other words, if $(A,b)\in\mathfrak{Sym}(S_g)\setminus\mathfrak{Sym}^{id}(S_g)$, then $A$ has one of the forms
\begin{align*}
A=\begin{pmatrix}
O & \mathcal{S}_{-\lambda}\\
\mathcal{S}_{\lambda} & O
\end{pmatrix}\quad\text{or}\quad
\begin{pmatrix}
O & \mathcal{T}_{-\lambda}\\
\mathcal{T}_{\lambda} & O
\end{pmatrix},\quad\lambda\in\mathbb{R}.
\end{align*}
\end{rmk}
The following proposition follows directly from Lemma \ref{Lem44} and Lemma \ref{formA}.
\begin{prop}\label{propprop46}
If $a\neq 0$, then $\left|\mathfrak{Sym}(S_g)\right|\leq 8$.
\end{prop}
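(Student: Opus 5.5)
The plan is to combine Lemma \ref{Lem44} with the last assertion of Lemma \ref{formA}; no further geometry is needed.

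First, with $a\neq 0$, I will bound the subgroup $\mathfrak{Sym}^{id}(S_g)$. Consider the homomorphism $\mathfrak{Sym}^{id}(S_g)\to O(4)$ sending $(A,b)\mapsto A$. By Lemma \ref{Lem41}, the translation part is uniquely determined by the linear part, so this map is injective. By Lemma \ref{formA}(2), its image lies in the four-element set
\begin{align*}
\left\{\pm\begin{pmatrix} I & O\\ O & I\end{pmatrix},\ \pm\begin{pmatrix}\mathcal{T}_{\theta_a} & O\\ O & \mathcal{T}_{-\theta_a}\end{pmatrix}\right\}.
\end{align*}
Hence $|\mathfrak{Sym}^{id}(S_g)|\leq 4$, which is exactly the ``in particular'' clause of Lemma \ref{formA}.

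Second, Lemma \ref{Lem44} says $\mathfrak{Sym}^{id}(S_g)$ has index at most $2$ in $\mathfrak{Sym}(S_g)$. The reason is that the map sending a symmetry to the permutation its linear part induces on $\{Q_1,Q_2,Q_3\}$ is a homomorphism. Its image is contained in $\{\mathrm{id},(Q_1\,Q_3)\}$, and its kernel is $\mathfrak{Sym}^{id}(S_g)$. Therefore $|\mathfrak{Sym}(S_g)|=[\mathfrak{Sym}(S_g):\mathfrak{Sym}^{id}(S_g)]\cdot|\mathfrak{Sym}^{id}(S_g)|\leq 2\cdot 4=8$.

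The one point to check, and the only place I expect any care is needed, is that $\mathfrak{Sym}(S_g)$ is finite, so that the index formula is used correctly. This is not a real obstacle: the image of the permutation homomorphism has at most two elements and its kernel has at most four, so the group has at most eight elements directly, without assuming finiteness in advance.
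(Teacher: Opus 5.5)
Your proposal is correct and matches the paper's argument: the paper deduces the bound directly from Lemma \ref{Lem44} (index at most $2$) and the final clause of Lemma \ref{formA}. That clause gives $|\mathfrak{Sym}^{id}(S_g)|\leq 4$ for $a\neq 0$, via the injectivity $(A,b)\mapsto A$ from Lemma \ref{Lem41}, exactly as you spell out; your observation that the count $2\cdot 4$ works without assuming finiteness beforehand is a valid minor refinement.
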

It is worth noting that the Costa--Hoffman--Meeks surface of genus $g$ in $\mathbb{R}^3$ has $4(g+1)$ symmetries, which exceeds $8$ for $g\geq 2$. Given our interest in cases with large symmetry groups, such as the Costa--Hoffman--Meeks surfaces, we restrict our attention to the case $a=0$ and $r_0=1$ for the remainder of the paper. In this setting, the model configuration that we aim to desingularize is the union of the Lagrangian catenoid $\Sigma_{LC}$ and the center plane $\Pi_c$ containing the waist circle.

\section{Structure of the Underlying Riemann Surface}\label{RH}
\setcounter{equation}{0}
Let $S_g\subset\mathbb{R}^4$ be a complete, oriented, embedded minimal surface with finite total curvature and genus $g\geq 1$. Suppose that $S_g$ has at least $4(g+1)$ symmetries. As mentioned at the end of the previous section, we focus on the case $a=0$ and $r_0=1$, so that $S_g$ has three embedded planar ends parallel to $Q_1=Q_1(0)$, $Q_2=Q_2(0, 1)$, and $Q_3$. 

Suppose that $S_g$ is represented by an embedding $X:\Sigma_g\setminus\{q_1,q_2,q_3\}\to\mathbb{R}^4$ via the generalized Weierstrass representation (\ref{gwr}), where each point $q_j$ corresponds to an embedded planar end of $S_g$ parallel to $Q_j$. We study the structure of the underlying Riemann surface $\Sigma_g$ by analyzing orientation-preserving symmetries of $S_g$.

\subsection{Orientation-preserving symmetries}
We observed in Lemma \ref{formA} that when $a=0$, if $(A, b)\in\mathfrak{Sym}^{id}(S_g)$, then $A$ takes one of the following forms:
\begin{align*}
A=\begin{pmatrix}
\mathcal{S}_{\lambda} & O\\
O & \mathcal{S}_{-\lambda}
\end{pmatrix}\quad\text{or}\quad 
\begin{pmatrix}
\mathcal{T}_{\lambda} & O\\
O & \mathcal{T}_{-\lambda}
\end{pmatrix},\quad\lambda\in\mathbb{R}.
\end{align*}
Let $\mathcal{H}\subseteq\mathfrak{Sym}^{id}(S_g)$ be the subgroup of elements that preserve the orientation. In other words, it consists of all elements $(A, b)\in\mathfrak{Sym}^{id}(S_g)$ where $A$ takes the first form above. Additionally, let $\mathcal{G}\subseteq\mathfrak{Sym}(S_g)$ denote the set of all orientation-preserving symmetries of $S_g$. The following lemma holds as in \cite{HM2}.
\begin{lemma}\label{Lem51}
$\mathcal{H}\subseteq \mathcal{G}\subseteq \mathfrak{Sym}(S_g)$. If $\mathcal{G}\neq\mathfrak{Sym}(S_g)$, then $\mathcal{G}$ has index $2$ in $\mathfrak{Sym}(S_g)$, and if $\mathcal{H}\neq \mathcal{G}$, then $\mathcal{H}$ has index $2$ in $\mathcal{G}$.
\end{lemma}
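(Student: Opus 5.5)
The plan is to obtain both index statements as kernels of homomorphisms onto $\{\pm1\}$. The key observation is that an orientation of $S_g$ is preserved or reversed by a symmetry $(A,b)$ in a way that depends only on the symmetry, and this is multiplicative. Concretely, since $S_g$ is connected and oriented, every $R=(A,b)\in\mathfrak{Sym}(S_g)$ restricts to a diffeomorphism $R|_{S_g}:S_g\to S_g$. This map either preserves or reverses the orientation everywhere, because the sign of the Jacobian determinant with respect to the given orientation is continuous and never zero on a connected surface. Let $\epsilon(R)\in\{\pm1\}$ record this sign. Since $(R_1\circ R_2)|_{S_g}=R_1|_{S_g}\circ R_2|_{S_g}$, the map $\epsilon:\mathfrak{Sym}(S_g)\to\{\pm1\}$ is a group homomorphism with $\ker\epsilon=\mathcal{G}$. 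Hence $\mathcal{G}$ is a subgroup, and if $\mathcal{G}\neq\mathfrak{Sym}(S_g)$ then $\epsilon$ is surjective, so $[\mathfrak{Sym}(S_g):\mathcal{G}]=2$.

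For the inclusions, $\mathcal{H}\subseteq\mathcal{G}$ holds by the definition of $\mathcal{H}$ as the orientation-preserving elements of $\mathfrak{Sym}^{id}(S_g)$, and $\mathcal{G}\subseteq\mathfrak{Sym}(S_g)$ is trivial. For the second index statement, observe that $\mathcal{H}=\mathcal{G}\cap\mathfrak{Sym}^{id}(S_g)$. By Lemma \ref{Lem44}, $\mathfrak{Sym}^{id}(S_g)$ is the kernel of the homomorphism $\pi:\mathfrak{Sym}(S_g)\to S_2\cong\{\pm1\}$ sending a symmetry to the permutation of $\{Q_1,Q_2,Q_3\}$ induced by its linear part; only the identity and the swap of $Q_1$ and $Q_3$ can occur. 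Restricting $\pi$ to $\mathcal{G}$ gives a homomorphism with kernel $\mathcal{H}$. If $\mathcal{H}\neq\mathcal{G}$, this restriction is onto, so $[\mathcal{G}:\mathcal{H}]=2$.

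The only point needing care is the identification of $\mathcal{H}$: it should be exactly the elements of $\mathfrak{Sym}^{id}(S_g)$ that preserve the orientation of $S_g$. The text equates this with $A$ having the block form built from $\mathcal{S}_{\lambda}$ and $\mathcal{S}_{-\lambda}$. If that identification needs justification, I would check it at an end. Since $q_1$ is fixed and $A$ preserves $Q_1$, the induced map near the end $q_1$ is asymptotically the restriction of $A$ to the plane $Q_1$, and its orientation character there determines $\epsilon$.

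In the $\mathcal{S}$-form, $A$ acts on $Q_1=\{(z,0)\}$ as a rotation of $z$. In the $\mathcal{T}$-form, $\mathcal{T}_\lambda$ has determinant $-1$, so $A$ acts on $Q_1$ as a reflection. Therefore the $\mathcal{S}$-forms are exactly the orientation-preserving elements, and no step here is a genuine obstacle.
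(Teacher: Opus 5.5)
Your proof is correct and follows the paper's argument. The paper likewise notes $\mathcal{H}=\mathcal{G}\cap\mathfrak{Sym}^{id}(S_g)$, uses Lemma \ref{Lem44} for the index of $\mathcal{H}$ in $\mathcal{G}$, and uses that a composition of two orientation-reversing symmetries preserves orientation for the index of $\mathcal{G}$ in $\mathfrak{Sym}(S_g)$; these are exactly your two homomorphisms to $\{\pm1\}$. Your check that the $\mathcal{S}$-forms are precisely the orientation-preserving elements, via $\det(A|_{Q_1})$ at the end $q_1$, supplies the detail the paper calls straightforward, and it is exact rather than asymptotic because orthogonal projection onto $Q_1$ commutes with $A$.
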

\begin{proof}
Using the above representation of $A$ when $(A,b)\in\mathcal{H}$, it is straightforward to verify that $\mathcal{H}=\mathcal{G}\cap\mathfrak{Sym}^{id}(S_g)$. By Lemma \ref{Lem44}, $\mathcal{H}$ is a subgroup of $\mathcal{G}$ of index at most $2$. Furthermore, since the composition of two orientation-reversing symmetries is orientation-preserving, it follows that $\mathcal{G}$ is a subgroup of $\mathfrak{Sym}(S_g)$ of index at most $2$.
\end{proof}
\begin{rmk}\normalfont
In \cite{HM2}, the unit normal to the surface at each end is assumed to be parallel to the $x_3$-axis. Under this assumption, $\text{H}$ is defined as the group of all symmetries that are rotations about the $x_3$-axis, while $\text{G}$ denotes the group of orientation-preserving symmetries. Both $\text{H}$ in \cite{HM2} and $\mathcal{H}$ in our setting fix all three ends of the surface. See \cite[Theorem 6.1]{HM2} for further details.
\end{rmk}
We now prove that each element of $\mathcal{H}$ induces a biholomorphism on $\Sigma_g$. Let $(A, b)\in\mathcal{H}$. Since $S_g$ is given by an embedding $X:\Sigma_g\setminus\{q_1, q_2, q_3\}\to\mathbb{R}^4$, the action of $(A, b)$ on $S_g$ induces a diffeomorphism
\begin{align*}
\mu_{(A, b)}: \Sigma_g\setminus\{q_1,q_2,q_3\}\to\Sigma_g\setminus\{q_1,q_2,q_3\}
\end{align*}
such that $(A, b)\circ X=X\circ \mu_{(A, b)}$. As $(A, b)\in\mathcal{H}$ fixes all three ends of $S_g$, this diffeomorphism extends to a homeomorphism $\tilde{\mu}_{(A,b)}$ of $\Sigma_g$ by setting
\begin{align*}
\tilde{\mu}_{(A,b)}(p)=
\begin{cases}
\mu_{(A,b)}(p) &\text{if}\ p\in\Sigma_g\setminus\{q_1,q_2,q_3\}\\
p &\text{if}\ p\in\{q_1,q_2,q_3\}
\end{cases}.
\end{align*}
\begin{lemma}
The map $\tilde{\mu}_{(A, b)}: \Sigma_g\to\Sigma_g$ is a biholomorphism.
\end{lemma}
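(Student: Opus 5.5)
First I will show that $\mu_{(A,b)}$ is holomorphic on $\Sigma_g\setminus\{q_1,q_2,q_3\}$. Away from the ends, $X$ is a conformal embedding, so the complex structure on $\Sigma_g\setminus\{q_1,q_2,q_3\}$ is the one induced by the metric $X^*\langle\cdot,\cdot\rangle$ together with the orientation. The map $(A,b)$ is an isometry of $\mathbb{R}^4$, and $(A,b)\circ X=X\circ\mu_{(A,b)}$. Hence $\mu_{(A,b)}$ is an isometry of the induced metric, and in particular it is conformal or anticonformal.

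The point to check is orientation. The orientation of $S_g$ is read from the tangent planes. Since $(A,b)\in\mathcal{H}\subseteq\mathcal{G}$, the differential of $(A,b)$ carries oriented tangent planes of $S_g$ to oriented tangent planes. Equivalently, $\mu_{(A,b)}$ preserves the orientation of $\Sigma_g\setminus\{q_1,q_2,q_3\}$.

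This may need care in codimension two, because "orientation-preserving" has to be interpreted through the tangent-plane orientation rather than $\det A$. I would argue as follows. By definition, $\mathcal{G}$ is the set of symmetries preserving the orientation of $S_g$. Each element of $\mathcal{H}$ has the block form with $\mathcal{S}_\lambda,\mathcal{S}_{-\lambda}$, and $\mathcal{H}$ was identified in the paper with the orientation-preserving elements of $\mathfrak{Sym}^{id}(S_g)$. So this step is essentially given. If needed, it can also be checked directly: the induced map on the Grassmannian fixes the asymptotic oriented planes $Q_j$ with their orientation, since $\mathcal{S}_{\lambda}$ rotates $Q_3^\perp$ and $Q_3$ without flipping them. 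An orientation-preserving conformal diffeomorphism between Riemann surfaces is holomorphic, so $\mu_{(A,b)}$ is holomorphic on the punctured surface.

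Next I would extend across the punctures. The homeomorphism $\tilde\mu_{(A,b)}$ is continuous at each $q_j$, and it is holomorphic on a punctured neighborhood of $q_j$. In a local coordinate at $q_j$ it is therefore a bounded holomorphic function on a punctured disc, mapping into a coordinate disc around $q_j$ because it fixes $q_j$ continuously. By Riemann's removable singularity theorem it extends holomorphically, and the extension agrees with $\tilde\mu_{(A,b)}$ by continuity.

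To see that $\tilde\mu_{(A,b)}$ is continuous at $q_j$ in the first place, I would use the following. Neighborhoods of $q_j$ correspond to the end of $S_g$ asymptotic to $Q_j$, namely points of large norm near that plane. The map $(A,b)$ maps the end asymptotic to $Q_j$ to itself, since $AQ_j=Q_j$ for elements of $\mathfrak{Sym}^{id}(S_g)$ and the three planes are distinct. It also sends points of large norm to points of large norm.

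Finally, $\tilde\mu_{(A,b)}$ is bijective with inverse $\tilde\mu_{(A,b)^{-1}}$, which is holomorphic by the same argument. Therefore $\tilde\mu_{(A,b)}$ is a biholomorphism. The main obstacle is the orientation step, which I would handle via the explicit block form of $A$ as described above.
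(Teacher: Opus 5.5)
Your proof is correct, but it reaches holomorphicity by a different mechanism than the paper.

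\textbf{The paper's route.} The paper works through the generalized Gauss map. From the block form $A=\mathcal{S}_\lambda\oplus\mathcal{S}_{-\lambda}$ it reads off $(G_1\circ\mu_{(A,b)},G_2\circ\mu_{(A,b)})=(G_1,e^{2i\lambda}G_2)$. The chain rule then gives $\bar\partial\mu_{(A,b)}=0$ wherever $\partial G_1$ or $\partial G_2$ is nonzero. Finally it removes, by Riemann's theorem, the finitely many common critical points $\mathcal{F}$ together with the three punctures. This last step is where it needs the Gauss map to be nonconstant.

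\textbf{Your route.} You use that the complex structure is the conformal class of the induced metric together with the orientation. So an orientation-preserving isometry is holomorphic on the whole punctured surface at once. The only removable singularities are then $q_1,q_2,q_3$. You also justify the continuity of $\tilde{\mu}_{(A,b)}$ at the ends, which the paper simply asserts.

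\textbf{Common input.} Both arguments rest on the same fact: elements of $\mathcal{H}$ preserve the orientation of $S_g$. In the paper this is hidden in the transformation law of $(G_1,G_2)$, which presupposes conformality; an anticonformal $\mu$ would give $\Phi\circ\mu=\overline{A\Phi}$ instead.

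\textbf{Completing your direct check.} If you want the direct orientation check to be self-contained, finish it with exactly this observation. $\Phi$ extends continuously to $q_j$ with value the oriented plane $Q_j$, and $A$ fixes $Q_j$ with its orientation. Letting $p\to q_j$ in $\Phi(\mu(p))=\overline{A\Phi(p)}$ would then force $\Phi(q_j)=\overline{\Phi(q_j)}$, which is impossible. Note that this step uses the continuity of $\tilde{\mu}_{(A,b)}$ at $q_j$, which you only establish later in your write-up, so it should come after that.

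\textbf{Comparison.} The paper's route yields the explicit transformation rule for $(G_1,G_2)$, which matches its later use of $\nu_g^*\Phi=A_{\mathcal{H}}\Phi$. Yours is more intrinsic and avoids the exceptional set $\mathcal{F}$ entirely.
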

\begin{proof}
Suppose that $A$ has the form
\begin{align*}
A=\begin{pmatrix}
\mathcal{S}_{\lambda} & O\\
O & \mathcal{S}_{-\lambda}
\end{pmatrix}
\end{align*}
for some $\lambda\in\mathbb{R}$, with respect to the standard basis of $\mathbb{R}^4$. The generalized Gauss map $(G_1,G_2):\Sigma_g\setminus\{q_1,q_2,q_3\}\to\left(\mathbb{C}\cup\{\infty\}\right)^2$ satisfies
\begin{align*}
\left(G_1\circ\mu_{(A,b)}(p), G_2\circ\mu_{(A,b)}(p)\right)=\left(G_1(p), e^{2i\lambda}G_2(p)\right)
\end{align*}
for all $p\in\Sigma_g\setminus\{q_1,q_2,q_3\}$. Since $G_1$ and $G_2$ are holomorphic, we derive
\begin{align*}
0=\frac{\partial}{\partial\bar{z}}\left(G_j\circ\mu_{(A,b)}\right)(z)=\frac{\partial G_j}{\partial w}\left(\mu_{(A,b)}(z)\right)\cdot\frac{\partial}{\partial\bar{z}}\mu_{(A,b)}(z),
\end{align*}
where $z$ and $w$ are local complex coordinates of $\Sigma_g$ near $p$ and $\mu_{(A,b)}(p)$, respectively. Thus,
\begin{align*}
\bar{\partial}\mu_{(A,b)}=0
\end{align*}
on $\mu_{(A,b)}^{-1}(\mathcal{F})\cap\Sigma_g\setminus\{q_1,q_2,q_3\}$, where $\mathcal{F}$ is defined as
\begin{align*}
\mathcal{F}:=\left\{p\in\Sigma_g\setminus\{q_1,q_2,q_3\}\ \left|\right.\ \partial G_1(p)=\partial G_2(p)=0\right\}.
\end{align*}

The set $\mathcal{F}$ consists of isolated points since $G_1$ and $G_2$ extend meromorphically to $\Sigma_g$. Otherwise, $G_1$ and $G_2$ would both be constant, contradicting the assumption that $S_g$ is not a plane. Consequently, $\mu_{(A,b)}$ is holomorphic on $\Sigma_g$ except at finitely many points. Since $\mu_{(A, b)}$ extends to a homeomorphism $\tilde{\mu}_{(A, b)}$, these points are removable singularities by Riemann's theorem. Therefore $\tilde{\mu}_{(A, b)}$ is a biholomorphism. This completes the proof.
\end{proof}
\begin{rmk}\normalfont
Although the definition of $\mu_{(A, b)}$ depends on the choice of a conformal harmonic embedding $X:\Sigma_g\setminus\{q_1, q_2, q_3\}\to\mathbb{R}^4$, once $X$ is fixed, the correspondence $(A, b)\mapsto \tilde{\mu}_{(A, b)}$ is one-to-one. This follows directly from the definition of $\mu$. In particular, if $\tilde{\mu}_{(A_1, b_1)}=\tilde{\mu}_{(A_2, b_2)}$, then $A_1=A_2$ and $b_1=b_2$.
\end{rmk}

\subsection{Order of the symmetry group $\mathfrak{Sym}(S_g)$}
By the definition of $\mathcal{H}$ and Lemma \ref{Lem41}, $\mathcal{H}$ can be regarded as a subgroup of the circle group $S^1$ via the map $(A, b)\in\mathcal{H}\mapsto e^{i\lambda}\in S^1$. The biholomorphisms induced by $\mathcal{H}$ fix three points $q_1$, $q_2$, and $q_3$. It is known that the set of biholomorphisms of $\Sigma_g$ fixing three points is finite. Consequently, $\mathcal{H}$ is a finite group. Furthermore, as every finite subgroup of the circle group is cyclic, it follows that $\mathcal{H}$ is cyclic.

Let $(A_{\mathcal{H}}, b_{\mathcal{H}})\in\mathcal{H}$ be a generator corresponding to $e^{i\frac{2\pi}{|\mathcal{H}|}}\in S^1$. By the assumption $\left|\mathfrak{Sym}(S_g)\right|\geq 4(g+1)$ and Lemma \ref{Lem51},
\begin{align*}
|\mathcal{H}|\geq g+1\geq 2.
\end{align*}
Thus, $(A_{\mathcal{H}}, b_{\mathcal{H}})$ is not the identity and acts freely on $\mathbb{R}^4\setminus\{p_0\}$, where $p_0:=(I-A_{\mathcal{H}})^{-1}b_{\mathcal{H}}$. For simplicity, we denote $\tilde{\mu}_{\mathcal{H}}:=\tilde{\mu}_{(A_{\mathcal{H}}, b_{\mathcal{H}})}$. Following \cite{HM2}, consider the quotient map
\begin{align*}
\mathcal{Q}_{\mathcal{H}}: \Sigma_g\to\Sigma_g/\left\langle\tilde{\mu}_{\mathcal{H}}\right\rangle.
\end{align*}
The branch points of $\mathcal{Q}_{\mathcal{H}}$ are precisely the fixed points of $\tilde{\mu}_{\mathcal{H}}$ given by $\{q_1, q_2, q_3\}\cup X^{-1}(p_0)$, where $X^{-1}(p_0)$ may possibly be empty. Denote the number of branch points by $n$. Then $n\geq 3$, and each branch point has a branching order $|\mathcal{H}|-1$.

By the Riemann-Hurwitz formula \cite{D}, we have
\begin{align*}
|\mathcal{H}|\chi\left(\Sigma_g/\left\langle\tilde{\mu}_{\mathcal{H}}\right\rangle\right)=\chi\left(\Sigma_g\right)+n(|\mathcal{H}|-1)\geq 2(1-g)+3g=2+g>0.
\end{align*}
This implies that $\Sigma_g/\left\langle\tilde{\mu}_{\mathcal{H}}\right\rangle$ is a sphere $S^2$ with Euler characteristic $\chi\left(\Sigma_g/\left\langle\tilde{\mu}_{\mathcal{H}}\right\rangle\right)=2$. Substituting this value into the above formula yields
\begin{align*}
|\mathcal{H}|=1+\frac{2g}{n-2}.
\end{align*}
Since $|\mathcal{H}|\geq g+1$, we obtain $n\leq 4$. Therefore, $n=3$ or $4$, yielding $|\mathcal{H}|=2g+1$ or $|\mathcal{H}|=g+1$, respectively. Furthermore, since the number of symmetries satisfies 
\begin{align*}
\left|\mathfrak{Sym}(S_g)\right|\geq 4(g+1)>2(2g+1),
\end{align*}
it follows that both inclusions $\mathcal{H}\subseteq\mathcal{G}$ and $\mathcal{G}\subseteq\mathfrak{Sym}(S_g)$ in Lemma \ref{Lem51} have index $2$. Consequently, under the assumption $\left|\mathfrak{Sym}(S_g)\right|\geq 4(g+1)$, the total number of symmetries must be either $4(g+1)$ or $4(2g+1)$. 

Combining these observations, we obtain the following proposition.
\begin{prop}\label{prop55}
If $\left|\mathfrak{Sym}(S_g)\right|\geq 4(g+1)$, then the quotient map $\mathcal{Q}_{\mathcal{H}}:\Sigma_g\to\Sigma_g/\left\langle\tilde{\mu}_{\mathcal{H}}\right\rangle$ is a $|\mathcal{H}|$-fold cyclic branched covering over $S^2$, satisfying one of the following:
\begin{itemize}
\item[(1)] $|\mathcal{H}|=2g+1$ with branch points $\{q_1, q_2, q_3\}$.
\item[(2)] $|\mathcal{H}|=g+1$ with branch points $\{q_0, q_1, q_2, q_3\}$, where $q_0\in\Sigma_g$ is the unique point such that $X(q_0)=p_0\left(=(I-A_{\mathcal{H}})^{-1}b_{\mathcal{H}}\right)$.
\end{itemize}
In both cases, the total number of symmetries is given by $\left|\mathfrak{Sym}(S_g)\right|=4|\mathcal{H}|$.
\end{prop}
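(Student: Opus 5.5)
The plan is to assemble the counting argument carried out just before the statement into a proof, checking each link in order. First, I would note that $\mathcal{H}$ is a finite cyclic group. By Lemma \ref{Lem41} the map $(A,b)\mapsto e^{i\lambda}$ is injective, and the induced biholomorphisms $\tilde{\mu}_{(A,b)}$ fix $q_1,q_2,q_3$. For $g\geq 2$ the automorphism group of $\Sigma_g$ is finite. For $g=1$, finiteness follows because the automorphisms fixing a point of a torus form a finite group. Cyclicity then comes from being a finite subgroup of $S^1$.

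Next, from $|\mathfrak{Sym}(S_g)|\geq 4(g+1)$ and Lemma \ref{Lem51} (each inclusion has index at most $2$), I would get $|\mathcal{H}|\geq g+1\geq 2$, so the generator $(A_{\mathcal{H}},b_{\mathcal{H}})$ is nontrivial. Its linear part is $\mathrm{diag}(\mathcal{S}_\lambda,\mathcal{S}_{-\lambda})$ with $\lambda\notin 2\pi\mathbb{Z}$, so $I-A_{\mathcal{H}}$ is invertible. The affine map therefore has the unique fixed point $p_0$.

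The key step is identifying the fixed points of $\tilde{\mu}_{\mathcal{H}}$ and of all its nontrivial powers, since that set is the branch locus of $\mathcal{Q}_{\mathcal{H}}$. The three ends are fixed by construction. Any other fixed point $p$ satisfies $X(p)=p_0$, and by embeddedness there is at most one such point $q_0$. The same reasoning applies to every nontrivial power of the generator, because each is again nontrivial of the same form. Hence every branch point is totally ramified with order $|\mathcal{H}|-1$, and the number of branch points is $n\in\{3,4\}$ a priori. Riemann--Hurwitz then gives $|\mathcal{H}|\chi(\Sigma_g/\langle\tilde{\mu}_{\mathcal{H}}\rangle)\geq 2+g>0$, so the quotient is $S^2$ and $|\mathcal{H}|=1+2g/(n-2)$. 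Thus $n=3$ gives $|\mathcal{H}|=2g+1$ and $n=4$ gives $|\mathcal{H}|=g+1$, matching cases (1) and (2). In case (2) the fourth branch point is exactly $q_0$ with $X(q_0)=p_0$.

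Finally, since $|\mathcal{H}|\leq 2g+1$ and $4(g+1)>2(2g+1)$, the bound $|\mathfrak{Sym}(S_g)|\leq 4|\mathcal{H}|$ together with $|\mathfrak{Sym}(S_g)|\geq 4(g+1)$ forces both indices in Lemma \ref{Lem51} to equal $2$. In case (2) this is immediate: $|\mathfrak{Sym}|\leq 4(g+1)$, so equality holds. In case (1), $4(g+1)>2(2g+1)$ rules out index product $\leq 2$, so the product is $4$ and $|\mathfrak{Sym}(S_g)|=4|\mathcal{H}|$. The main obstacle is the fixed-point analysis for powers of the generator, which ensures the covering is cyclic with uniform branching order. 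I would handle it through the invertibility of $I-A$ for every nontrivial element of $\mathcal{H}$, as above.
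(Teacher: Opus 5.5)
Your proposal is correct and follows the paper's argument essentially step for step: finiteness and cyclicity of $\mathcal{H}$, the bound $|\mathcal{H}|\geq g+1$ from Lemma \ref{Lem51}, Riemann--Hurwitz to force the quotient to be $S^2$ with $|\mathcal{H}|=1+2g/(n-2)$, and the inequality $4(g+1)>2(2g+1)$ to make both indices equal to $2$. The differences are minor: you obtain $n\leq 4$ directly from embeddedness, whereas the paper derives it from $|\mathcal{H}|\geq g+1$. You also check explicitly that every nontrivial power of the generator has the same fixed points, which justifies the uniform branching order that the paper leaves implicit.
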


\subsection{Cyclic coverings}
Let $\mathcal{B}$ denote the set of branch points of $\mathcal{Q}_{\mathcal{H}}$. By restricting $\mathcal{Q}_{\mathcal{H}}$ to $\Sigma_g\setminus\mathcal{B}$, we obtain an unbranched cyclic covering over $S^2\setminus\mathcal{Q}_{\mathcal{H}}(\mathcal{B})$. This cyclic covering is determined by the lifting behavior of loops in $\pi_1\left(S^2\setminus\mathcal{Q}_{\mathcal{H}}\right)$ under $\mathcal{Q}_{\mathcal{H}}$. More precisely, for each $q_j\in\mathcal{B}$, let $\gamma_{q_j}(t)$ be a loop in $S^2\setminus\mathcal{Q}_{\mathcal{H}}(\mathcal{B})$ that encircles $\mathcal{Q}_{\mathcal{H}}(q_j)$ counterclockwise exactly once, and let $\tilde{\gamma}_{q_j}(t)$ denote its lift. The endpoints of $\tilde{\gamma}_{q_j}(t)$ differ by $\left(\tilde{\mu}_{\mathcal{H}}\right)^{N_{q_j}}$, where $0\leq N_{q_j}<|\mathcal{H}|$. Since the loops $\gamma_{q_j}$ generate $\pi_1\left(S^2\setminus\mathcal{Q}_{\mathcal{H}}\right)$, the integers $N_{q_j}$ determine the kernel of the homomorphism
\begin{align*}
\pi_1\left(S^2\setminus\mathcal{Q}_{\mathcal{H}}\right)\to \mathbb{Z}/|\mathcal{H}|\mathbb{Z}.
\end{align*}
Cyclic coverings over $S^2\setminus\mathcal{Q}_{\mathcal{H}}(\mathcal{B})$ with the same kernel are equivalent. Therefore, the integers $N_{q_j}$ characterize the equivalence class of cyclic coverings.

In \cite{HM2}, the integers $N_{q_j}$ were computed geometrically using a generator that rotates clockwise by $\frac{2\pi}{|\mathcal{H}|}$ about the $x_3$-axis. In the codimension $1$ setting, an embedding divides $\mathbb{R}^3$ into two regions, with all ends being parallel. This property uniquely determines both the orientation of each end and its rotation under the generator. By contrast, in higher codimension, the orientations of the ends are no longer uniquely determined from the embeddedness. Nevertheless, the integers $N_{q_j}$ can still be computed for each possible configuration of orientations, allowing the explicit construction of an equivalent cyclic covering in each case as in \cite{HM2}.

We now explain how the integers $N_{q_j}$ are calculated in our case using the image of the generalized Gauss map. First, consider the case where $q\in\Sigma_g\setminus\{q_1, q_2, q_3\}$ is a branch point. This implies that the tangent plane $T_{X(q)}S_g$ is $A_{\mathcal{H}}$-invariant. Recall that the generalized Gauss map is given by $[2X_z]=[X_x-iX_y]\in\mathbb{P}^3$, where $z=x+iy$ is a local complex coordinate of $\Sigma_g$. Hence, if the image of the generalized Gauss map at $q$ is $[v_1+iv_2]\in\mathbb{P}^3$ for some $v_1, v_2\in\mathbb{R}^4$, then the oriented orthonormal basis of $T_{X(q)}S_g$ can be chosen as 
\begin{align*}
\mathfrak{B}=\left\{\frac{v_1}{|v_1|}, -\frac{v_2}{|v_2|}\right\}.
\end{align*}
Since $T_{X(q)}S_g$ is $A_{\mathcal{H}}$-invariant, the matrix representation of $A_{\mathcal{H}}|_{T_{X(q)}S_g}$ with respect to $\mathfrak{B}$ is given by
\begin{align*}
[A_{\mathcal{H}}|_{T_{X(q)}S_g}]_{\mathfrak{B}}=\mathcal{S}_{\pm\frac{2\pi}{|\mathcal{H}|}}.
\end{align*}
Then the integer $N_q$ is determined as
\begin{align*}
N_q=
\begin{cases}
1\ &\text{if}\ [A_{\mathcal{H}}|_{T_{X(q)}S_g}]_{\mathfrak{B}}=\mathcal{S}_{\frac{2\pi}{|\mathcal{H}|}},\\
|\mathcal{H}|-1\ &\text{if}\ [A_{\mathcal{H}}|_{T_{X(q)}S_g}]_{\mathfrak{B}}=\mathcal{S}_{-\frac{2\pi}{|\mathcal{H}|}}.
\end{cases}
\end{align*}

Next, let $q\in\Sigma_g$ be a branch point corresponding to an end, i.e., $q\in\{q_1, q_2, q_3\}$. When the generalized Gauss map at $q$ is given by $[v_1+iv_2]\in\mathbb{P}^3$, let us choose an oriented orthonormal basis in the same manner as described above. In this case, a reversal in orientation occurs when considering the rotation with respect to $A_{\mathcal{H}}$. Therefore, the above values should be reversed:
\begin{align*}
N_q=
\begin{cases}
|\mathcal{H}|-1\ &\text{if}\ [A_{\mathcal{H}}|_{V_{\mathfrak{B}}}]_{\mathfrak{B}}=\mathcal{S}_{\frac{2\pi}{|\mathcal{H}|}},\\
1\ &\text{if}\ [A_{\mathcal{H}}|_{V_{\mathfrak{B}}}]_{\mathfrak{B}}=\mathcal{S}_{-\frac{2\pi}{|\mathcal{H}|}},
\end{cases}
\end{align*}
where $V_{\mathfrak{B}}$ denotes an $\mathbb{R}$-vector space generated by $\mathfrak{B}$.

Since $S_g$ has three embedded planar ends parallel to $Q_1(0)$, $Q_2(0, 1)$, and $Q_3$, we may write the images of the generalized Gauss map at $q_1$, $q_2$, and $q_3$ as
\begin{align}\label{GMI}
\begin{cases}
\Phi(q_1)=[1, \sigma_1i, 0, 0]\in\mathbb{P}^3,\quad \Phi(q_2)=[1, \sigma_2i, 1, -\sigma_2i]\in\mathbb{P}^3,\\ 
\Phi(q_3)=[0, 0, 1, -\sigma_3i]\in\mathbb{P}^3,
\end{cases}
\end{align}
for some $\sigma_1, \sigma_2, \sigma_3\in\{-1,1\}$. Based on the above argument, we compute the following result.
\begin{lemma}\label{Lem56}
For $1\leq j\leq 3$, we have $N_{q_j}\equiv\sigma_j$ \textup{(mod} $|\mathcal{H}|)$.
\end{lemma}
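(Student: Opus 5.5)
The plan is a direct computation using the recipe for ends stated just before (\ref{GMI}). By Lemma \ref{formA} with $a=0$ and the choice of generator corresponding to $e^{i\frac{2\pi}{|\mathcal{H}|}}$, the linear part of $(A_{\mathcal{H}},b_{\mathcal{H}})$ is
\begin{align*}
A_{\mathcal{H}}=\begin{pmatrix}\mathcal{S}_{\lambda} & O\\ O & \mathcal{S}_{-\lambda}\end{pmatrix},\qquad \lambda=\frac{2\pi}{|\mathcal{H}|}.
\end{align*}
For each end $q_j$ I will do three things:
\begin{itemize}
\item[(a)] split the representative of $\Phi(q_j)$ given in (\ref{GMI}) into $v_1+iv_2$ with $v_1,v_2\in\mathbb{R}^4$;
\item[(b)] form the oriented orthonormal basis $\mathfrak{B}=\{v_1/|v_1|,-v_2/|v_2|\}$ and check that $V_{\mathfrak{B}}$ is $A_{\mathcal{H}}$-invariant (it is one of $Q_1,Q_2,Q_3$, so Lemma \ref{formA} already gives this);
\item[(c)] compute the first column of $[A_{\mathcal{H}}|_{V_{\mathfrak{B}}}]_{\mathfrak{B}}$, which determines $\mu$ with $[A_{\mathcal{H}}|_{V_{\mathfrak{B}}}]_{\mathfrak{B}}=\mathcal{S}_{\mu}$.
\end{itemize}
I then read off $N_{q_j}$ from the reversed rule for ends.

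For $q_1$: here $v_1=e_1$ and $v_2=\sigma_1e_2$, so $\mathfrak{B}=\{e_1,-\sigma_1e_2\}$. Then
\begin{align*}
A_{\mathcal{H}}e_1=\cos\lambda\, e_1+\sin\lambda\, e_2=\cos\lambda\, e_1-\sigma_1\sin\lambda\,(-\sigma_1e_2),
\end{align*}
so $\mu=-\sigma_1\lambda$.

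For $q_3$: here $v_1=e_3$ and $v_2=-\sigma_3e_4$, so $\mathfrak{B}=\{e_3,\sigma_3e_4\}$. The lower block $\mathcal{S}_{-\lambda}$ gives
\begin{align*}
A_{\mathcal{H}}e_3=\cos\lambda\, e_3-\sin\lambda\, e_4=\cos\lambda\, e_3-\sigma_3\sin\lambda\,(\sigma_3e_4),
\end{align*}
so again $\mu=-\sigma_3\lambda$.

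For $q_2$: here $v_1=(1,0,1,0)$ and $v_2=\sigma_2(0,1,0,-1)$. Put $u=v_1/\sqrt2$ and $w=-\sigma_2(0,1,0,-1)/\sqrt2$. Then
\begin{align*}
A_{\mathcal{H}}u=\tfrac{1}{\sqrt2}(\cos\lambda,\sin\lambda,\cos\lambda,-\sin\lambda)=\cos\lambda\, u-\sigma_2\sin\lambda\, w,
\end{align*}
so $\mu=-\sigma_2\lambda$.

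In all three cases $[A_{\mathcal{H}}|_{V_{\mathfrak{B}}}]_{\mathfrak{B}}=\mathcal{S}_{-\sigma_j\lambda}$. By the reversed rule for ends, $\sigma_j=1$ gives $\mathcal{S}_{-\lambda}$ and hence $N_{q_j}=1$. Likewise $\sigma_j=-1$ gives $\mathcal{S}_{\lambda}$ and hence $N_{q_j}=|\mathcal{H}|-1\equiv-1$. In both cases $N_{q_j}\equiv\sigma_j \pmod{|\mathcal{H}|}$.

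The computation itself is routine. The only real pitfall is bookkeeping of signs: the minus sign in $-v_2/|v_2|$, the reversed convention at ends, and the fact that the $Q_3$-block rotates by $-\lambda$ rather than $\lambda$. I will double-check these against the $q_1$ case, where everything is in the $(x_1,x_2)$-plane and the sign conventions are most transparent.
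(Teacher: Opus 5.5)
Your proposal is correct and is essentially the paper's proof: you use the same oriented basis $\{v_1/|v_1|,-v_2/|v_2|\}$, get $[A_{\mathcal{H}}|_{V_{\mathfrak{B}}}]_{\mathfrak{B}}=\mathcal{S}_{-\sigma_j\frac{2\pi}{|\mathcal{H}|}}$, and apply the reversed rule at ends, and your explicit $q_2$ and $q_3$ computations (which the paper leaves as ``similar'') are right. Reading off $\mu$ from the first column alone is legitimate because the restriction of $A_{\mathcal{H}}$ to each $Q_j$ has determinant $1$ (it is $\mathcal{S}_{\pm\frac{2\pi}{|\mathcal{H}|}}$ in the obvious bases), so in any orthonormal basis it is some rotation $\mathcal{S}_\mu$ and not a reflection $\mathcal{T}_\mu$.
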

\begin{proof}
Let us first compute $N_{q_1}$. As explained above, we choose an oriented orthonormal basis $\mathfrak{B}=\{w_1, w_2\}$, where
\begin{align*}
w_1:=
\begin{pmatrix}
1\\ 0\\ 0\\ 0
\end{pmatrix},\quad 
w_2:=
\begin{pmatrix}
0\\ -\sigma_1\\ 0\\ 0
\end{pmatrix}.
\end{align*}
From these, we compute
\begin{align*}
\begin{cases}
A_{\mathcal{H}}w_1=\cos\left(\frac{2\pi}{|\mathcal{H}|}\right)w_1-\sigma_1\sin\left(\frac{2\pi}{|\mathcal{H}|}\right)w_2,\\
A_{\mathcal{H}}w_2=\sigma_1\sin\left(\frac{2\pi}{|\mathcal{H}|}\right)w_1+\cos\left(\frac{2\pi}{|\mathcal{H}|}\right)w_2.
\end{cases}
\end{align*}
This implies that the matrix representation of $A_{\mathcal{H}}|_{V_{\mathfrak{B}}}$ with respect to $\mathfrak{B}$ is $\mathcal{S}_{-\sigma_1\frac{2\pi}{|\mathcal{H}|}}$. Since $q_1$ corresponds to an end, we have
\begin{align*}
N_{q_1}\equiv\sigma_1\ (\text{mod}\ |\mathcal{H}|).
\end{align*}
The remaining values can be computed in a similar way, completing the proof.
\end{proof}
The product of the loops $[\gamma_q]\in\pi_1\left(S^2\setminus\mathcal{Q}_{\mathcal{H}}(\mathcal{B})\right)$, taken over all branch points $q\in\mathcal{B}$, is null-homotopic. Hence, the following lemma holds.
\begin{lemma}\label{Lem57}
$\sum_{q\in\mathcal{B}}N_q\equiv 0$ \textup{(mod} $|\mathcal{H}|)$.
\end{lemma}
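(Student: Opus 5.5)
The plan is to read Lemma \ref{Lem57} as the standard relation among the local monodromies of the unbranched cyclic covering
\begin{align*}
\mathcal{Q}_{\mathcal{H}}:\Sigma_g\setminus\mathcal{B}\to S^2\setminus\mathcal{Q}_{\mathcal{H}}(\mathcal{B}),
\end{align*}
and then use that the monodromy representation is a group homomorphism into an abelian group.

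First I set up the monodromy. I fix a base point $x_0\in S^2\setminus\mathcal{Q}_{\mathcal{H}}(\mathcal{B})$ and a lift $\tilde{x}_0$. Every loop $\gamma$ based at $x_0$ lifts uniquely to a path starting at $\tilde{x}_0$. Its endpoint is $(\tilde{\mu}_{\mathcal{H}})^{N(\gamma)}(\tilde{x}_0)$ for a well-defined class $N(\gamma)\in\mathbb{Z}/|\mathcal{H}|\mathbb{Z}$. This is well defined because the deck group is $\langle\tilde{\mu}_{\mathcal{H}}\rangle$ and it acts simply transitively on fibers.

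Next I check that $\gamma\mapsto N(\gamma)$ is a homomorphism $\pi_1(S^2\setminus\mathcal{Q}_{\mathcal{H}}(\mathcal{B}),x_0)\to\mathbb{Z}/|\mathcal{H}|\mathbb{Z}$. Take two loops $\gamma,\delta$. The lift of $\gamma\cdot\delta$ is the lift of $\gamma$ followed by $(\tilde{\mu}_{\mathcal{H}})^{N(\gamma)}$ applied to the lift of $\delta$. This is because deck transformations commute with $\mathcal{Q}_{\mathcal{H}}$ and map lifts to lifts. So the endpoint is $(\tilde{\mu}_{\mathcal{H}})^{N(\gamma)+N(\delta)}(\tilde{x}_0)$. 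Homotopy invariance follows from the homotopy lifting property. Since the target group is abelian, $N(\gamma)$ does not depend on the choice of base point along a connecting path. It also does not depend on the choice of lift. Hence the numbers $N_q$ defined via the loops $\gamma_q$ agree with $N([\gamma_q])$ after conjugating each $\gamma_q$ by a path to $x_0$.

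Then I use the presentation of the fundamental group of a punctured sphere. Let $\mathcal{Q}_{\mathcal{H}}(\mathcal{B})$ consist of $n$ points. One can choose the loops $\gamma_q$, conjugated to $x_0$ and ordered cyclically around $x_0$, so that
\begin{align*}
\prod_{q\in\mathcal{B}}[\gamma_q]=1
\end{align*}
in $\pi_1$. Each $\gamma_q$ encircles $\mathcal{Q}_{\mathcal{H}}(q)$ once counterclockwise. The relation holds because the product bounds the complement of small discs, which is a disc on $S^2$. Applying the homomorphism $N$ gives $\sum_{q\in\mathcal{B}}N_q\equiv0$ (mod $|\mathcal{H}|$). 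A different choice of representatives changes each $N_q$ only by conjugation, which does nothing in an abelian group, so the sum is unaffected.

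The only delicate point is this last one: the individual loops $\gamma_q$ in the paper were chosen independently, without reference to a common base point. The abelian target removes the issue, since conjugate loops have equal $N$, so I expect no real obstacle.
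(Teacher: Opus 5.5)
Your proposal is correct and is the same argument as the paper's, which simply asserts that the product of the loops $[\gamma_q]$ is null-homotopic and applies the monodromy homomorphism to $\mathbb{Z}/|\mathcal{H}|\mathbb{Z}$; you additionally supply the routine checks the paper leaves implicit, namely additivity of $N$ and its independence of base point and lift, which follow because the deck group is abelian. One wording fix: if read literally, your justification of $\prod[\gamma_q]=1$ is off, since the complement of the small discs is a sphere with $|\mathcal{B}|$ holes rather than a disc; the correct statement is that the product of the standard based loops is homotopic to a loop bounding a puncture-free disc, or, since the target is abelian, that the boundary circles of that holed sphere sum to zero in $H_1$.
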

For genus $g\geq 2$, Proposition \ref{prop55}, Lemma \ref{Lem56}, and Lemma \ref{Lem57} imply the following lemma.
\begin{lemma}\label{Lem58}
If $\left|\mathfrak{Sym}(S_g)\right|\geq 4(g+1)$ and $g\geq 2$, then $|\mathcal{H}|=g+1$.
\end{lemma}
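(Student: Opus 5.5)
By Proposition \ref{prop55}, under the hypothesis $\left|\mathfrak{Sym}(S_g)\right|\geq 4(g+1)$ there are only two possibilities: either $|\mathcal{H}|=2g+1$ with branch set $\mathcal{B}=\{q_1,q_2,q_3\}$, or $|\mathcal{H}|=g+1$ with $\mathcal{B}=\{q_0,q_1,q_2,q_3\}$. The plan is to rule out the first case when $g\geq2$. We do this by combining the local rotation numbers of Lemma \ref{Lem56} with the global relation of Lemma \ref{Lem57}.

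Assume for contradiction that $|\mathcal{H}|=2g+1$ and $\mathcal{B}=\{q_1,q_2,q_3\}$. Lemma \ref{Lem56} gives $N_{q_j}\equiv\sigma_j \pmod{2g+1}$ with $\sigma_j\in\{-1,1\}$. Lemma \ref{Lem57}, whose sum now runs over only the three ends, then yields
\begin{align*}
\sigma_1+\sigma_2+\sigma_3\equiv 0\pmod{2g+1}.
\end{align*}
Since each $\sigma_j=\pm1$, the sum $\sigma_1+\sigma_2+\sigma_3$ is odd and lies in $\{-3,-1,1,3\}$. It is therefore nonzero, with absolute value at most $3$. For $g\geq2$ we have $2g+1\geq5>3$, so no nonzero integer of absolute value at most $3$ is divisible by $2g+1$. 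This is a contradiction, so case (1) of Proposition \ref{prop55} cannot occur.

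It remains only to note that the alternative is case (2), so $|\mathcal{H}|=g+1$. The one point to check is that in case (1) the set $\mathcal{B}$ contains no fixed points other than the ends, so that Lemma \ref{Lem57} involves exactly three terms. This is precisely the content of case (1) in Proposition \ref{prop55}. Note that for $g=1$ the argument fails, since $2g+1=3$ divides $\pm3$; this is consistent with the genus-one examples with $12$ symmetries. There is no real obstacle here: the proof is a parity-and-size argument once Lemmas \ref{Lem56} and \ref{Lem57} are in place.
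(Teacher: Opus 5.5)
Your proposal is correct and follows essentially the same route as the paper. You assume $|\mathcal{H}|=2g+1$, so that $\mathcal{B}=\{q_1,q_2,q_3\}$ by Proposition \ref{prop55}, and combine Lemmas \ref{Lem56} and \ref{Lem57} to get $\sigma_1+\sigma_2+\sigma_3\equiv 0 \pmod{2g+1}$; this contradicts $1\leq|\sigma_1+\sigma_2+\sigma_3|\leq 3<2g+1$ for $g\geq 2$, exactly as in the paper's proof.
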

\begin{proof}
By Proposition \ref{prop55}, if $\left|\mathfrak{Sym}(S_g)\right|\geq 4(g+1)$, then $|\mathcal{H}|$ is either $g+1$ or $2g+1$. Suppose that $|\mathcal{H}|=2g+1$. Then Proposition \ref{prop55} implies that the branch points of $\mathcal{Q}_{\mathcal{H}}$ are given by $\mathcal{B}=\{q_1, q_2, q_3\}$.

By applying Lemma \ref{Lem56} and Lemma \ref{Lem57}, we have
\begin{align*}
\sum_{q\in\mathcal{B}}N_q\equiv \sigma_1+\sigma_2+\sigma_3\equiv 0\ (\text{mod}\ 2g+1).
\end{align*}
Since $\sigma_j\in\{-1,1\}$ for all $1\leq j\leq 3$, this implies that $1\leq |\sigma_1+\sigma_2+\sigma_3|\leq 3$. The above congruence cannot hold for these values since $2g+1>3$ for $g\geq 2$. Therefore, $|\mathcal{H}|=g+1$.
\end{proof}
We conclude this subsection by constructing cyclic coverings equivalent to $\mathcal{Q}_{\mathcal{H}}$ under the assumption that $\left|\mathfrak{Sym}(S_g)\right|\geq 4(g+1)$ and $g\geq 2$.
\begin{prop}\label{prop59}
Suppose that $\left|\mathfrak{Sym}(S_g)\right|\geq4(g+1)$ and $g\geq 2$. Let $S_g$ be oriented such that the generalized Gauss map at $q_1\in\Sigma_g$ is $[1, i, 0, 0]\in\mathbb{P}^3$. Then the quotient map $\mathcal{Q}_{\mathcal{H}}:\Sigma_g\to\Sigma_g/\left\langle\tilde{\mu}_{\mathcal{H}}\right\rangle$ is equivalent to one of the following maps $\overline{\mathcal{C}}_g\to\overline{\mathcal{C}}_g/\langle\nu_g\rangle$, where $\nu_g(z,w)=\left(z, e^{i\frac{2\pi}{g+1}}w\right)$:
\begin{itemize}
\item[(1)] For $(N_{q_0}, N_{q_2}, N_{q_3})=(g, 1, g)$,
\begin{align*}
\overline{\mathcal{C}}_g:=\left\{(z, w)\in\left(\mathbb{C}\cup\{\infty\}\right)\times\left(\mathbb{C}\cup\{\infty\}\right)\ |\ w^{g+1}=z^g(z+1)^g(z-1)\right\}.
\end{align*}
\item[(2)] For $(N_{q_0}, N_{q_2}, N_{q_3})=(1, g, g)$,
\begin{align*}
\overline{\mathcal{C}}_g:=\left\{(z, w)\in\left(\mathbb{C}\cup\{\infty\}\right)\times\left(\mathbb{C}\cup\{\infty\}\right)\ |\ w^{g+1}=z(z+1)^g(z-1)\right\}.
\end{align*}
\item[(3)] For $(N_{q_0}, N_{q_2}, N_{q_3})=(g, g, 1)$,
\begin{align*}
\overline{\mathcal{C}}_g:=\left\{(z, w)\in\left(\mathbb{C}\cup\{\infty\}\right)\times\left(\mathbb{C}\cup\{\infty\}\right)\ |\ w^{g+1}=z^g(z+1)(z-1)\right\}.
\end{align*}
\item[(4)] For $g=3$ and $(N_{q_0}, N_{q_2}, N_{q_3})=(1, 1, 1)$,
\begin{align*}
\overline{\mathcal{C}}_g:=\left\{(z, w)\in\left(\mathbb{C}\cup\{\infty\}\right)\times\left(\mathbb{C}\cup\{\infty\}\right)\ |\ w^4=z(z+1)(z-1)\right\}.
\end{align*}
\end{itemize}
In each case, $q_0$, $q_1$, $q_2$, and $q_3$ correspond to $(z,w)=(0,0)$, $(1,0)$, $\infty$, and $(-1,0)$, respectively.
\end{prop}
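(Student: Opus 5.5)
The plan is to determine the rotation data $(N_{q_0},N_{q_1},N_{q_2},N_{q_3})$ purely arithmetically. Then I would realize each admissible tuple by an explicit cyclic cover of $\mathbb{P}^1$. Finally I would use the additional symmetries in $\mathfrak{Sym}(S_g)\setminus\mathcal{H}$ to fix the positions of the four branch values.

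\textbf{Step 1: the admissible data.} By Lemma \ref{Lem58}, $|\mathcal{H}|=g+1$, so by Proposition \ref{prop55} the branch set is $\mathcal{B}=\{q_0,q_1,q_2,q_3\}$. The chosen orientation gives $\sigma_1=1$ in (\ref{GMI}), so Lemma \ref{Lem56} yields $N_{q_1}=1$, and $N_{q_2},N_{q_3}\in\{1,g\}$. Since $q_0$ is an interior fixed point, the tangent plane $T_{X(q_0)}S_g$ is $A_{\mathcal{H}}$-invariant and $A_{\mathcal{H}}$ acts on it by $\mathcal{S}_{\pm 2\pi/(g+1)}$, so also $N_{q_0}\in\{1,g\}$. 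Writing $N_q\equiv\epsilon_q\in\{\pm1\}$, Lemma \ref{Lem57} gives
\begin{align*}
1+\epsilon_{q_0}+\epsilon_{q_2}+\epsilon_{q_3}\equiv 0\ (\text{mod}\ g+1),
\end{align*}
where the left-hand side lies in $\{-2,0,2,4\}$. Because $g+1\geq 3$, the values $\pm2$ are impossible. The value $0$ means exactly one of $\epsilon_{q_0},\epsilon_{q_2},\epsilon_{q_3}$ equals $+1$, which gives the three tuples in cases (1)--(3). The value $4$ forces $g+1=4$, which is case (4).

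\textbf{Step 2: the models.} For the curve $w^{g+1}=\prod_j (z-z_j)^{m_j}$, analytic continuation of $w$ along a small counterclockwise loop about $z_j$ multiplies $w$ by $e^{2\pi i m_j/(g+1)}$, i.e.\ it acts by $\nu_g^{m_j}$. The monodromy at $\infty$ is $\nu_g^{-\sum m_j}$.
\begin{itemize}
\item[] I would place $q_0,q_1,q_3$ over $z=0,1,-1$ with exponents $m=N_{q_0},N_{q_1},N_{q_3}$, and check that the induced exponent at $\infty$, namely $-(N_{q_0}+1+N_{q_3})$ mod $g+1$, equals $N_{q_2}$. For example, in case (1) it is $-(2g+1)\equiv 1$.
\item[] All exponents are prime to $g+1$, so every branch point is totally ramified. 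Riemann--Hurwitz then gives genus $g$ for the normalization $\overline{\mathcal{C}}_g$.
\end{itemize}
Cyclic covers of the four-punctured sphere with the same monodromy kernel are equivalent, as explained before Lemma \ref{Lem56}. Hence $\mathcal{Q}_{\mathcal{H}}$ is equivalent to $\overline{\mathcal{C}}_g\to\overline{\mathcal{C}}_g/\langle\nu_g\rangle$, with $\tilde{\mu}_{\mathcal{H}}$ corresponding to $\nu_g$. To make this identification valid, I would choose the generator orientation consistently with the clockwise/counterclockwise conventions used in defining $N_q$.

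\textbf{Step 3: the cross-ratio (main obstacle).} A Möbius map can send three of the four branch values to $0,\infty,1$, but the cross-ratio of the fourth is a priori a free modulus. To pin it at $-1$, I would use $\mathcal{G}\setminus\mathcal{H}$, which is nonempty by Proposition \ref{prop55}. By Remark \ref{formAAA}, its elements swap $Q_1$ and $Q_3$ and fix $Q_2$, so they exchange $q_1\leftrightarrow q_3$ and fix $q_2$. Since $\mathcal{H}$ is normal of index $2$ and has the unique fixed point $p_0$ off the ends, these elements also fix $q_0$. Such an element normalizes $\langle\tilde{\mu}_{\mathcal{H}}\rangle$, so it descends to a biholomorphic involution of the quotient sphere fixing the images of $q_0,q_2$ and swapping those of $q_1,q_3$. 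After normalizing $q_0\mapsto 0$ and $q_2\mapsto\infty$, this involution is $z\mapsto -z$. Sending $q_1\mapsto 1$ then forces $q_3\mapsto -1$, which gives the stated positions. The delicate points are checking that the induced map on $\Sigma_g$ is holomorphic, as in the lemma for $\mathcal{H}$, and that it genuinely fixes $q_0$.
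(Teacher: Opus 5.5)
Your proposal is correct. Steps 1 and 2 follow the paper's argument: you enumerate the admissible triples from Lemma \ref{Lem56}, Lemma \ref{Lem57} and $N_{q_0}\equiv\pm1$, and then match the monodromy exponents of $w^{g+1}=z^{N_{q_0}}(z+1)^{N_{q_3}}(z-1)$, including the exponent at $\infty$.

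The only real difference is in Step 3. The paper, following Hoffman--Meeks, first shows $\mathfrak{Sym}(S_g)/\mathcal{H}\cong\mathbb{Z}_2\times\mathbb{Z}_2$ (every square lies in $\mathcal{H}$). It then realizes the two orientation-reversing cosets as anticonformal involutions $z\mapsto e^{i\theta}\bar z$ of the quotient sphere, that is, reflections in two orthogonal lines through $0$. Their product is the $\pi$-rotation, which gives $\mathcal{Q}_{\mathcal{H}}(q_3)=-\mathcal{Q}_{\mathcal{H}}(q_1)$.

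You instead use the orientation-preserving coset $\mathcal{G}\setminus\mathcal{H}$ directly. It descends to a M\"obius involution fixing $0$ and $\infty$ and exchanging the distinct points $1$ and $\mathcal{Q}_{\mathcal{H}}(q_3)$, so it must be $z\mapsto -z$. This is shorter and needs only $[\mathcal{G}:\mathcal{H}]=2$ from Proposition \ref{prop55}. It avoids the Klein-four structure, the anticonformal maps, and the orthogonality step. The paper's route additionally identifies the orientation-reversing symmetries as reflections, which the proposition does not need.

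The two points you flag as delicate are routine:
\begin{itemize}
\item An element of $\mathcal{G}$ induces an orientation-preserving isometry of the induced conformal metric. It is therefore holomorphic on $\Sigma_g\setminus\{q_1,q_2,q_3\}$ and extends over the punctures.
\item It fixes $q_0$ because conjugation by it permutes the nontrivial elements of $\mathcal{H}$. Each of these has $p_0$ as its unique fixed point in $\mathbb{R}^4$.
\end{itemize}
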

\begin{proof}
By Proposition \ref{prop55} and Lemma \ref{Lem58}, it follows that $|\mathcal{H}|=g+1$, and the set of branch points of $\mathcal{Q}_{\mathcal{H}}$ is $\mathcal{B}=\{q_0, q_1, q_2, q_3\}$. Let us set $\mathcal{Q}_{\mathcal{H}}(q_0)=0$, $\mathcal{Q}_{\mathcal{H}}(q_1)=1$, and $\mathcal{Q}_{\mathcal{H}}(q_2)=\infty$. Here we have identified $\Sigma_g/\left\langle\tilde{\mu}_{\mathcal{H}}\right\rangle$ with the Riemann sphere $\mathbb{C}\cup\{\infty\}$.

Using the same argument as in \cite{HM2}, we deduce that $\mathcal{Q}_{\mathcal{H}}(q_3)=-1$. For the sake of completeness, we briefly outline the argument. By Lemma \ref{Lem51} and Proposition \ref{prop55}, the group $\mathfrak{Sym}(S_g)/\mathcal{H}$ has order $4$ and acts on $\Sigma_g/\left\langle\tilde{\mu}_{\mathcal{H}}\right\rangle\simeq\mathbb{C}\cup\{\infty\}$. Moreover, if $(A, b)\in\mathfrak{Sym}(S_g)$, then $(A,b)\circ(A,b)$ preserves orientation and fixes all ends. This implies that $(A,b)\circ(A,b)\in\mathcal{H}$, and hence $\mathfrak{Sym}(S_g)/\mathcal{H}$ is isomorphic to $\mathbb{Z}_2\times\mathbb{Z}_2$. 

Each orientation-reversing element in this group corresponds to an anticonformal involution of $\mathbb{C}$ fixing $0\in\mathbb{C}$, necessarily of the form $e^{i\theta}\overline{z}$. These two elements correspond to reflections across two orthogonal lines. Hence, $\mathcal{Q}_{\mathcal{H}}(q_3)$ must be the $\pi$-rotation of $\mathcal{Q}_{\mathcal{H}}(q_1)$, that is, $\mathcal{Q}_{\mathcal{H}}(q_3)=-\mathcal{Q}_{\mathcal{H}}(q_1)=-1$. For further details, refer to \cite[Section 6]{HM2}.

On the other hand, by Lemma \ref{Lem57}, the following congruence holds:
\begin{align*}
N_{q_0}+N_{q_1}+N_{q_2}+N_{q_3}\equiv 0\ (\text{mod}\ g+1).
\end{align*}
Assuming that the generalized Gauss map at $q_1$ is $[1, i, 0, 0]\in\mathbb{P}^3$, i.e., $\sigma_1=1$ in (\ref{GMI}), it follows from Lemma \ref{Lem56} that $N_{q_1}=1$. Substituting this into the congruence gives
\begin{align*}
N_{q_0}+1+N_{q_2}+N_{q_3}\equiv 0\ (\text{mod}\ g+1).
\end{align*}
Since $N_{q_j}\equiv \pm 1(\text{mod}\ g+1)$ for all $j$, the possible triples $(N_{q_0}, N_{q_2}, N_{q_3})$ for $g\geq 2$ are:
\begin{align*}
(N_{q_0}, N_{q_2}, N_{q_3})=(g, 1, g),\ (1, g, g),\ (g, g, 1),\ (1, 1, 1).
\end{align*}
The first three cases hold for any $g\geq 2$, while the last case holds only for $g=3$. We note that these numbers are computed from the geometry of $S_g$.

For each of the triples obtained above, consider the Riemann surface $\mathcal{C}_g$ defined by
\begin{align*}
\mathcal{C}_g:=\left\{(z, w)\in\left(\mathbb{C}\setminus\{-1,0,1\}\right)\times\left(\mathbb{C}\setminus\{0\}\right)\ |\ w^{g+1}=z^{N_{q_0}}(z+1)^{N_{q_3}}(z-1)\right\}.
\end{align*}
Define an automorphism $\nu_g:\mathcal{C}_g\to\mathcal{C}_g$ by $\nu_g(z,w)=\left(z, e^{i\frac{2\pi}{g+1}}w\right)$. The quotient map $\mathcal{C}_g\to\mathcal{C}_g/\langle\nu_g\rangle$ is then given by the $z$-projection map
\begin{align*}
\left\{(z, w)\in\left(\mathbb{C}\setminus\{-1,0,1\}\right)\times\left(\mathbb{C}\setminus\{0\}\right)\ |\ w^{g+1}=z^{N_{q_0}}(z+1)^{N_{q_3}}(z-1)\right\}\\
\stackrel{z}{\longrightarrow}\ \mathbb{C}\setminus\{-1,0,1\}.
\end{align*}
This provides a cyclic covering over $\mathbb{C}\setminus\{-1,0,1\}$. Moreover, the automorphism $\nu_g$ extends in a straightforward manner to the compactified Riemann surface $\overline{\mathcal{C}}_g$, where
\begin{align*}
\overline{\mathcal{C}}_g:=\left\{(z, w)\in\left(\mathbb{C}\cup\{\infty\}\right)\times\left(\mathbb{C}\cup\{\infty\}\right)\ |\ w^{g+1}=z^{N_{q_0}}(z+1)^{N_{q_3}}(z-1)\right\}.
\end{align*}
The covering map also naturally extends to $\overline{\mathcal{C}}_g$ via the $z$-projection. This yields a branched cyclic covering over $\mathbb{C}\cup\{\infty\}$.

The integers $N_{\bullet}$ at the branch points $(0,0), (1,0), \infty, (-1,0)\in\overline{\mathcal{C}}_g$, which determine the lifting behavior of the covering map $\overline{\mathcal{C}}_g\to\mathbb{C}\cup\{\infty\}$, are calculated directly from the Riemann surface equation with respect to $\nu_g$ as follows:
\begin{align*}
N_{(0,0)}=N_{q_0},\quad N_{(1,0)}=N_{q_1},\quad N_{\infty}=N_{q_2},\quad N_{(-1,0)}=N_{q_3}.
\end{align*}
Thus, the constructed cyclic covering is equivalent to $\mathcal{Q}_{\mathcal{H}}$ for the given $(N_{q_0}, N_{q_2}, N_{q_3})$, completing the proof.
\end{proof}

\section{Nonexistence for Genus $g\geq2$ with $\left|\mathfrak{Sym}(S_g)\right|\geq4(g+1)$}\label{sec8}
\setcounter{equation}{0}
In this section, we prove the following nonexistence theorem.
\begin{thm}\label{Thm61}
There is no complete, oriented, embedded minimal surface $S_g\subset\mathbb{R}^4$ with finite total curvature and genus $g\geq 2$ such that $S_g$ has three embedded planar ends whose asymptotic planes are parallel to the triple $(Q_1(0), Q_2(0, 1), Q_3)$, corresponding to those of the union of the Lagrangian catenoid $\Sigma_{LC}$ and the center plane $\Pi_c$, and satisfies $\left|\mathfrak{Sym}(S_g)\right|\geq4(g+1)$.
\end{thm}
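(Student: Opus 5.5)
Argue by contradiction: suppose such an $S_g$ exists, and orient it so that $\sigma_1=1$. By Proposition \ref{prop59}, $\Sigma_g$ is then biholomorphic to one of the curves $\overline{\mathcal{C}}_g$ in cases (1)--(4). The generator $\tilde{\mu}_{\mathcal{H}}$ corresponds to $\nu_g$, and $q_0,q_1,q_2,q_3$ correspond to $(0,0),(1,0),\infty,(-1,0)$. The plan is to write down the Weierstrass data explicitly on each $\overline{\mathcal{C}}_g$ and show that the period, order, and residue conditions are incompatible.

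\textbf{Step 1: pin down the data by equivariance.} Let $A_{\mathcal{H}}=\mathrm{diag}(\mathcal{S}_\lambda,\mathcal{S}_{-\lambda})$ with $\lambda=2\pi/(g+1)$. The combinations $\phi_1\mp i\phi_2$ and $\phi_3\mp i\phi_4$ are eigenvectors of $A_{\mathcal{H}}$ with eigenvalues $e^{\pm i\lambda}$. Since $X\circ\tilde{\mu}_{\mathcal{H}}=A_{\mathcal{H}}X+b_{\mathcal{H}}$, each satisfies $\nu_g^*\psi=e^{\pm i\lambda}\psi$.

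\textbf{Step 2: identify the eigenforms.} On $\overline{\mathcal{C}}_g$, the meromorphic $1$-forms in a fixed eigenspace of $\nu_g$ are of the form $R(z)\,w^{-k}dz$ (or $w^{k}$), with $R$ rational. I will combine this with Proposition \ref{prop27}. Each $\phi_j$ has poles only at $q_1,q_2,q_3$, of order exactly $2$ where nonzero, and has zero residues. The ends have the prescribed Gauss map values (\ref{GMI}) with $\sigma_1=1$. I will also use the values of $\sigma_2,\sigma_3$ forced by Lemma \ref{Lem56} in each case: $N_{q_2}=1\Leftrightarrow\sigma_2=1$ and $N_{q_2}=g\Leftrightarrow\sigma_2=-1$.

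For example, $\phi_1-i\phi_2$ must vanish at the ends $q_3$ (and $q_1$ appropriately, since $\Phi(q_1)=[1,i,0,0]$ gives $\phi_1+i\phi_2$ of lower order there). Similarly $\phi_3\pm i\phi_4$ must vanish suitably at $q_1$, and $\phi_1\mp i\phi_2$ at $q_3$. Computing local orders at the branch points using $w\sim (z-z_0)^{N/(g+1)}$, these vanishing and pole conditions determine each of the four combinations up to a few constants. Concretely, I expect each to be $c\,z^{a}(z+1)^{b}(z-1)^{c'}w^{-k}dz$ with exponents forced by the divisor. I will also count zeros: by the conformality condition $(\phi_1-i\phi_2)(\phi_1+i\phi_2)+(\phi_3-i\phi_4)(\phi_3+i\phi_4)=0$, the total divisor must have degree $4g-4+6$, the degree of $K\otimes[2q_1+2q_2+2q_3]$ counted appropriately.

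\textbf{Step 3: derive the contradiction.} Next I impose conformality, $\sum\phi_j^2=0$, which becomes the product relation above, together with the residue conditions at the ends and the vanishing of real periods. The period condition can be checked on the lifts of loops around $0$ and $\pm1$; by $\nu_g$-equivariance these reduce to finitely many beta-type integrals $\int_{-1}^{1}$ and $\int_{0}^{1}$. I expect the contradiction to come mostly from divisor and degree bookkeeping. In cases (1)--(3), the exponents dictated by $N_{q_j}\in\{1,g\}$ should make the pole order of some combination at $q_2=\infty$, or at $q_3$, incompatible with the order-$2$, zero-residue condition for every $g\ge2$. Alternatively, the conformality relation should force one combination to vanish identically, which would make $S_g$ lie in a plane, or make the Gauss map degenerate against $\Phi(q_2)=[1,\sigma_2 i,1,-\sigma_2 i]$. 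The exceptional case (4), with $g=3$ and $w^4=z(z^2-1)$, is handled by the same computation with explicit exponents. If the divisor argument does not close it, I fall back on the residue condition at $\infty$ or on a reflection-symmetry period check.

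\textbf{Main obstacle.} The main obstacle is Step 2/3: carrying out the local order computation at each branch point across all four cases without missing the possibility that a combination such as $\phi_1+i\phi_2$ is holomorphic rather than polar at some end. I expect the embeddedness (multiplicity-one) condition of Proposition \ref{PropOrdRes} to be exactly what rules these out.
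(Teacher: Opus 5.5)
\textbf{There is a genuine gap.} The contradiction you expect, from local divisor bookkeeping, does not exist, and cases (3) and (4) are left without a working argument.

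\textbf{Local conditions cannot exclude any case.} The integers $N_{q_j}$ in Proposition~\ref{prop59} are computed from the end values $\sigma_j$ (Lemma~\ref{Lem56}), so they are compatible with them by construction. The forms $\eta_j$ of Lemma~\ref{Lem63} have double poles with zero residue. They also lie in exactly the $\nu_g$-eigenspaces that $A_{\mathcal H}$ demands of $\phi_1\mp i\phi_2$ and $\phi_3\mp i\phi_4$. Hence equivariant data with the prescribed orders, residues and end Gauss maps exist in every case (Lemma~\ref{Lem68}). Two of your fallbacks therefore do nothing. The residue at $\infty$ vanishes by construction. The multiplicity-one condition of Proposition~\ref{PropOrdRes} is already built into the ansatz rather than being what excludes anything.

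\textbf{Conformality only handles cases (1) and (2).} Imposing $\sum\phi_j^2\equiv0$ forces $\beta\gamma=0$, resp.\ $\alpha\beta=0$, there; that part of your ``alternative'' is right. It does not rule out the other two cases:
\begin{itemize}
\item[(3)] it is satisfied by $a_0=-b_0=\frac{\alpha+\gamma}{2}$, $a_1=-b_1=-\beta$, with $\alpha,\beta,\gamma$ free;
\item[(4)] it is satisfied by $h_{10}=-h_{11}=h_{30}=h_{31}$ with $h_{10}(2\beta-\alpha+\gamma)=0$.
\end{itemize}

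\textbf{Case (3) needs the real period problem.} This case lives on the Costa--Hoffman--Meeks Riemann surface, and you only say the periods ``can be checked''. Note that $\nu_g$-equivariance yields no new period conditions: since $A_{\mathcal H}$ is real, the conditions on $\nu_g\circ\sigma$ follow from those on $\sigma$. What is needed is the extra automorphism $\lambda(z,w)=(-z,\delta w)$. The conditions on $\tilde c$ and $\lambda\circ\tilde c$ first force $\alpha+\gamma=0$. The two remaining equations then force $\overline{\beta}/\alpha$ to be both positive and negative. This comes from comparing signs of explicit integrals over $[0,1]$, with $\eta_1$ first corrected by the exact form $(g+1)\,d(z/w)$.

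\textbf{Case (4) needs a global topological input absent from your plan.} Here $g=3$.
\begin{itemize}
\item[] Gauss--Bonnet gives $\deg G_1+\deg G_2=10$.
\item[] Embeddedness gives $\bigl|\frac{1}{2\pi}\int K^{\perp}dA\bigr|=6$, hence $|\deg G_1-\deg G_2|=6$. This integral is twice the self-intersection of the compactification $\overline{S_3}\subset\mathbb{S}^4$. By embeddedness that self-intersection is concentrated at the triple point $\infty$, and the end orientations give value $6$.
\item[] Together these force $(\deg G_1,\deg G_2)\in\{(2,8),(8,2)\}$.
\item[] Direct computation contradicts this. If $h_{10}=0$, one of $G_1,G_2$ is constant. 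If $h_{10}\neq0$, then $1/G_1=\frac{z+1}{z-1}$, which has degree $4$ on $\Sigma_3$.
\end{itemize}
Your ``reflection-symmetry period check'' is not specified, and you give no reason it would close this case. As it stands, cases (3) and (4) are not handled.
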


The idea of the proof is as follows. Suppose that a surface $S_g$ satisfying the given conditions exists. Since $S_g$ has finite total curvature, it can be represented by an embedding $X:\Sigma_g\setminus\{q_1, q_2, q_3\}\to\mathbb{R}^4$ via the generalized Weierstrass representation (\ref{gwr}). Here we assume that each point $q_j$ corresponds to an embedded planar end of $S_g$ parallel to $Q_j$. By adjusting the orientation, we may assume that the generalized Gauss map at $q_1$ is given by $\Phi(q_1)=[1,i,0,0]\in\mathbb{P}^3$. 

Proposition \ref{prop59} then provides explicit candidates for the underlying Riemann surface $\Sigma_g$. On each candidate, the relevant holomorphic and meromorphic differentials can be determined explicitly, allowing us to construct the Weierstrass data in the form of Proposition \ref{prop27}. We then analyze these Weierstrass data one by one.

Each of the following subsections addresses a specific part of the proof.
In Subsection \ref{subsec61}, we first specify the differentials and express the Weierstrass data in terms of the generalized Gauss map. Then, in Subsection \ref{subsec64}, we further simplify the expressions using symmetry. Once the simplified data are obtained, we examine whether the necessary conditions are satisfied: in Subsection \ref{subsec62}, we check whether the squared sum of the Weierstrass data vanishes, and in Subsection \ref{subsec63}, we compute the real periods. Finally, in Subsection \ref{subsec65}, we complete the proof by calculating the degree of the generalized Gauss map for the remaining cases.

\subsection{Explicit construction of the Weierstrass data}\label{subsec61}
As in Section \ref{RH}, let $(A_{\mathcal{H}}, b_{\mathcal{H}})$ be a generator of $\mathcal{H}\subset\mathfrak{Sym}(S_g)$ such that $A_{\mathcal{H}}\in O(4)$ is given by
\begin{align*}
A_{\mathcal{H}}=\begin{pmatrix}
\mathcal{S}_{\frac{2\pi}{|\mathcal{H}|}} & O\\
O & \mathcal{S}_{-\frac{2\pi}{|\mathcal{H}|}}
\end{pmatrix}
\end{align*}
with respect to the standard basis of $\mathbb{R}^4$, and let $\tilde{\mu}_{\mathcal{H}}$ be the biholomorphism of $\Sigma_g$ induced by $(A_{\mathcal{H}}, b_{\mathcal{H}})$. Consider the quotient map $\mathcal{Q}_{\mathcal{H}}:\Sigma_g\to\Sigma_g/\left\langle\tilde{\mu}_{\mathcal{H}}\right\rangle$ as in \cite{HM2}. 

Since $\left|\mathfrak{Sym}(S_g)\right|\geq 4(g+1)$ and $g\geq 2$, we have $|\mathcal{H}|=g+1$ from Lemma \ref{Lem58}. Then, by Proposition \ref{prop55}, the set of branch points of $\mathcal{Q}_{\mathcal{H}}$ is $\{q_0, q_1, q_2, q_3\}$, where $q_0\in\Sigma_g$ is the unique point such that $X(q_0)=(I-A_{\mathcal{H}})^{-1}b_{\mathcal{H}}$. In Proposition \ref{prop59}, we constructed cyclic coverings $\overline{\mathcal{C}}_g\to\overline{\mathcal{C}}_g/\langle\nu_g\rangle$, each of which is equivalent to the quotient map $\mathcal{Q}_{\mathcal{H}}:\Sigma_g\to\Sigma_g/\left\langle\tilde{\mu}_{\mathcal{H}}\right\rangle$ in the corresponding case. 

We now identify $\mathcal{Q}_{\mathcal{H}}$ with the corresponding cyclic covering in each case. Under this identification, the points $q_0$, $q_1$, $q_2$, and $q_3$ correspond to $(z,w)=(0,0)$, $(1,0)$, $\infty$, and $(-1,0)$, respectively.

Recall that the Weierstrass data were described in terms of the generalized Gauss map in Proposition \ref{prop27}, where they were expressed using the meromorphic $1$-forms
\begin{align*}
\eta_j\in H^0(\Sigma_g, K\otimes [2q_j])\setminus H^0(\Sigma_g,K),\quad j=1,2,3,
\end{align*}
and holomorphic $1$-forms. The following lemmas determine these differentials on each Riemann surface constructed in Proposition \ref{prop59}.
\begin{lemma} Using the same numbering as in Proposition \ref{prop59}, holomorphic differentials on each Riemann surface can be described as follows:
\begin{itemize}
\item[(1)] On $\Sigma_g=\left\{(z, w)\in\left(\mathbb{C}\cup\{\infty\}\right)^2\ |\ w^{g+1}=z^g(z+1)^g(z-1)\right\}$,
\begin{align*}
\Omega\in H^0(\Sigma_g, K)\quad \text{if and only if}\quad \Omega=h\left(\frac{z(z+1)}{w}\right)\frac{dz}{w}
\end{align*}
for some polynomial $h(t)$ of degree at most $g-1$.\\
\item[(2)] On $\Sigma_g=\left\{(z, w)\in\left(\mathbb{C}\cup\{\infty\}\right)^2\ |\ w^{g+1}=z(z+1)^g(z-1)\right\}$,
\begin{align*}
\Omega\in H^0(\Sigma_g, K)\quad \text{if and only if}\quad \Omega=h\left(\frac{z+1}{w}\right)\frac{dz}{w}
\end{align*}
for some polynomial $h(t)$ of degree at most $g-1$.\\
\item[(3)] On $\Sigma_g=\left\{(z, w)\in\left(\mathbb{C}\cup\{\infty\}\right)^2\ |\ w^{g+1}=z^g(z+1)(z-1)\right\}$,
\begin{align*}
\Omega\in H^0(\Sigma_g, K)\quad \text{if and only if}\quad \Omega=h\left(\frac{z}{w}\right)\frac{dz}{w}
\end{align*}
for some polynomial $h(t)$ of degree at most $g-1$.\\
\item[(4)] On $\Sigma_3=\left\{(z, w)\in\left(\mathbb{C}\cup\{\infty\}\right)^2\ |\ w^4=z(z+1)(z-1)\right\}$,
\begin{align*}
\Omega\in H^0(\Sigma_3, K)\quad \text{if and only if}\quad \Omega=\left(h_0+h_1z+h_2w\right)\frac{dz}{w^3}
\end{align*}
for some $h_0, h_1,h_2\in\mathbb{C}$.
\end{itemize}
\end{lemma}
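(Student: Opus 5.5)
In each case the plan is to exhibit $g$ linearly independent holomorphic $1$-forms of the stated shape and then invoke $\dim_{\mathbb{C}}H^0(\Sigma_g,K)=g$ from the Riemann--Roch computation in Section \ref{wdepe}. The forms are all of the type $z^a(z+1)^b w^{-c}\,dz$, so everything reduces to a divisor computation at the four branch points $(0,0)$, $(1,0)$, $(-1,0)$, $\infty$. At a generic point $dz/w$ is clearly holomorphic, so only these four points need checking.

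First I record local data. Every branch point is totally ramified of index $g+1$, since all the exponents $N_{q_j}$ are prime to $g+1$. Take a local parameter $t$ at a point over $z=c$, so that $z-c\sim t^{g+1}$ and $dz\sim t^{g}dt$. If $w^{g+1}\sim (z-c)^m$ there, then $\operatorname{ord} w=m$. At $\infty$ we have $z\sim t^{-(g+1)}$ and $\operatorname{ord}dz=-(g+2)$. In case (1) the total degree is $2g+1$, so $\operatorname{ord}_\infty w=-(2g+1)$; in cases (2), (3) and (4) the degree is $g+2$ or $3$, giving $\operatorname{ord}_\infty w=-(g+2)$ or $-3$ respectively.

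Now consider case (1) and put $u=z(z+1)/w$. Then $u^{g+1}=z(z+1)/(z-1)$, so $u$ is a degree-one function of $z$'s cover in the sense that $\operatorname{ord}u$ equals $1$ at $(0,0)$ and $(-1,0)$, $-1$ at $(1,0)$, and $-1$ at $\infty$. For $dz/w$ the orders are $g-g=0$ at $(0,0)$ and $(-1,0)$, $g-1$ at $(1,0)$, and $-(g+2)+(2g+1)=g-1$ at $\infty$. Hence $u^k\,dz/w$ has order $\geq g-1-k\geq0$ at $(1,0)$ and $\infty$ for $0\le k\le g-1$, and order $k\ge 0$ elsewhere, so it is holomorphic. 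These $g$ forms have distinct orders at $(0,0)$, which makes them independent, and dimension count gives equality.

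Cases (2) and (3) are handled by the same bookkeeping, using $u=(z+1)/w$ and $u=z/w$ respectively. In each case I check that $dz/w$ vanishes to order $g-1$ exactly at the two points where $u$ has a simple pole, and that $u$ has simple zeros at the remaining points. For case (4), where $g=3$, I compute the orders of $dz/w^3$: $3-3=0$ at each finite branch point and $-5+9=4$ at $\infty$. Since $z$ and $w$ have poles of orders $4$ and $3$ at $\infty$ and are holomorphic elsewhere, the three forms $dz/w^3$, $z\,dz/w^3$, $w\,dz/w^3$ are holomorphic. They are independent because their orders at $\infty$ are $4,0,1$.

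The main obstacle I expect is simply getting the orders at $\infty$ right in each case. This amounts to the exponent arithmetic together with checking total ramification there, which follows from $\gcd(\deg,g+1)=1$: the degrees are $2g+1$, $g+2$ and $3$, each coprime to $g+1$ in its relevant case. Everything else is routine.
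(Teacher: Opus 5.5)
Your proposal is correct and follows essentially the same route as the paper. In each case you compute divisors at the four branch points, exhibit $g$ holomorphic forms whose distinct vanishing orders make them independent, and conclude from $\dim_{\mathbb{C}}H^0(\Sigma_g,K)=g$. The only variation is in case $(4)$, where you use the basis $dz/w^3$, $z\,dz/w^3$, $dz/w^2$ directly instead of the paper's $w\,dz/(z(z+1))$, $w\,dz/(z(z-1))$, $dz/w^2$. These span the same space, since $w\,dz/(z(z+1))=(z-1)\,dz/w^3$ and $w\,dz/(z(z-1))=(z+1)\,dz/w^3$.
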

\begin{proof}
In case $(1)$, the function $\frac{z(z+1)}{w}$ has simple zeros at $(0,0)$ and $(-1,0)$, and simple poles at $(1,0)$ and $\infty$. Moreover, the differential $\frac{dz}{w}$ is holomorphic and has zeros of order $g-1$ at $(1,0)$ and $\infty$. Thus, for every $0\leq l\leq g-1$, the differential
\begin{align*}
\left(\frac{z(z+1)}{w}\right)^l\frac{dz}{w}
\end{align*}
is holomorphic with zeros of order $l$ at $(0,0)$ and $(-1,0)$, and of order $g-1-l$ at $(1,0)$ and $\infty$. Since these differentials are linearly independent over $\mathbb{C}$ and $\dim_{\mathbb{C}}H^0(\Sigma_g, K)=g$, they form a basis for $H^0(\Sigma_g, K)$. Hence, every holomorphic differential can be written in the form 
\begin{align*}
\Omega=h\left(\frac{z(z+1)}{w}\right)\frac{dz}{w}
\end{align*}
for some polynomial $h(t)$ of degree at most $g-1$, and conversely, every such expression defines a holomorphic differential. The proofs for cases $(2)$ and $(3)$ follow similar computations and are omitted for brevity.

In case $(4)$, we observe that
\begin{align*}
\frac{wdz}{z(z+1)}\quad \text{and}\quad \frac{wdz}{z(z-1)}
\end{align*}
are holomorphic differentials with zeros of order $4$ at $(1,0)$ and $(-1,0)$, respectively. Moreover, the differential
\begin{align*}
\frac{dz}{w^2}
\end{align*}
is holomorphic with simple zeros at $(0,0)$, $(-1,0)$, $(1,0)$, and $\infty$. To verify their linear independence, assume that
\begin{align*}
a_1\frac{wdz}{z(z+1)}+a_2\frac{wdz}{z(z-1)}+a_3\frac{dz}{w^2}=\left(a_1(z-1)+a_2(z+1)+a_3w\right)\frac{dz}{w^3}\equiv0
\end{align*}
for some $a_1,a_2,a_3\in\mathbb{C}$. This implies that $a_1(z-1)+a_2(z+1)+a_3w\equiv 0$, and consequently, $a_1=a_2=a_3=0$. As $\dim_{\mathbb{C}}H^0(\Sigma_3, K)=3$, these differentials form a basis. Thus, every holomorphic differential can be written as a linear combination
\begin{align*}
a_1\frac{wdz}{z(z+1)}+a_2\frac{wdz}{z(z-1)}+a_3\frac{dz}{w^2}=\left(a_1(z-1)+a_2(z+1)+a_3w\right)\frac{dz}{w^3}
\end{align*}
for some $a_1,a_2,a_3\in\mathbb{C}$, which can be rewritten as 
\begin{align*}
\left(h_0+h_1z+h_2w\right)\frac{dz}{w^3}
\end{align*}
by letting $h_0=a_2-a_1$, $h_1=a_2+a_1$, and $h_2=a_3$.
\end{proof}

\begin{lemma}\label{Lem63}
Following the same order as in Proposition \ref{prop59}, meromorphic differentials 
$\eta_j\in H^0(\Sigma_g, K\otimes [2q_j])\setminus H^0(\Sigma_g,K)\ (j=1,2,3)$
on each Riemann surface can be chosen as follows:
\begin{itemize}
\item[(1)] On $\Sigma_g=\left\{(z, w)\in\left(\mathbb{C}\cup\{\infty\}\right)^2\ |\ w^{g+1}=z^g(z+1)^g(z-1)\right\}$,
\begin{align*}
\eta_1=\frac{dz}{w(z-1)},\quad \eta_2=\frac{(z-1)dz}{w},\quad \eta_3=\frac{wdz}{(z+1)^2(z-1)}.
\end{align*}
\item[(2)] On $\Sigma_g=\left\{(z, w)\in\left(\mathbb{C}\cup\{\infty\}\right)^2\ |\ w^{g+1}=z(z+1)^g(z-1)\right\}$,
\begin{align*}
\eta_1=\frac{dz}{w(z-1)},\quad \eta_2=\frac{wdz}{z(z-1)},\quad \eta_3=\frac{wdz}{z(z+1)^2(z-1)}.
\end{align*}
\item[(3)] On $\Sigma_g=\left\{(z, w)\in\left(\mathbb{C}\cup\{\infty\}\right)^2\ |\ w^{g+1}=z^g(z+1)(z-1)\right\}$,
\begin{align*}
\eta_1=\frac{dz}{w(z-1)},\quad \eta_2=\frac{wdz}{(z+1)(z-1)},\quad \eta_3=\frac{dz}{w(z+1)}.
\end{align*}
\item[(4)] On $\Sigma_3=\left\{(z, w)\in\left(\mathbb{C}\cup\{\infty\}\right)^2\ |\ w^4=z(z+1)(z-1)\right\}$,
\begin{align*}
\eta_1=\frac{dz}{w(z-1)},\quad \eta_2=\frac{dz}{w},\quad \eta_3=\frac{dz}{w(z+1)}.
\end{align*}
\end{itemize}
\end{lemma}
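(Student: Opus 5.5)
The plan is to verify, for each listed form in each of the four cases, that $\eta_j$ is holomorphic away from $q_j$ and has a pole of order exactly $2$ at $q_j$. Then $\eta_j\in H^0(\Sigma_g,K\otimes[2q_j])$ while $\eta_j\notin H^0(\Sigma_g,K)$, which is the required membership. Since everything reduces to computing orders of $z$, $z\pm1$, $w$ and $dz$ at the four branch points $(0,0)$, $(1,0)$, $\infty$, $(-1,0)$, I will first tabulate these divisors. At unbranched points, $z$ is a local coordinate and $w\neq0$, so the listed forms are visibly holomorphic there. Each branch point is totally ramified of order $g+1$ (order $4$ in case (4)).

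For a branch point lying over $z=c$, I will use a local parameter $t$ with $z-c\sim t^{g+1}$. This gives $\textup{ord}(z-c)=g+1$ and $\textup{ord}(dz)=g$. The order of $w$ is read off from $w^{g+1}=z^{N_{q_0}}(z+1)^{N_{q_3}}(z-1)$: it equals the exponent of the corresponding factor. At $\infty$, we have $\textup{ord}(z)=-(g+1)$ and $\textup{ord}(dz)=-(g+2)$. The order of $w$ there is minus the total degree of the right-hand side; this degree is $2g+1$, $g+2$, $g+2$ and $3$ in cases (1)--(4), respectively. With these conventions (in case (1), for instance, $\textup{ord}\,w=g$ at $(0,0)$ and $(-1,0)$, $1$ at $(1,0)$, and $-(2g+1)$ at $\infty$), I will then compute the divisor of each listed $\eta_j$ as a sum of these orders.

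As a sample check in case (1), take $\eta_1=dz/(w(z-1))$. Its orders are $g-g-0=0$ at $(0,0)$ and $(-1,0)$, $g-1-(g+1)=-2$ at $(1,0)$, and $-(g+2)+(2g+1)+(g+1)=2g$ at $\infty$. So $\eta_1$ has a double pole exactly at $q_1$, as required. For $\eta_2=(z-1)\,dz/w$, the orders are $0$ at $(0,0)$ and $(-1,0)$, $g+1+g-1=2g$ at $(1,0)$, and $-(g+1)-(g+2)+(2g+1)=-2$ at $\infty$. The remaining forms in all four cases are handled identically. In case (4), the order of $w$ is $1$ at each finite branch point and $-3$ at $\infty$, while $dz$ has order $3$ at the finite branch points and $-5$ at $\infty$. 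For $\eta_2=dz/w$ this gives orders $2$ at the finite branch points and $-2$ at $\infty$, as expected.

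A useful consistency check is that every divisor obtained must have degree $2g-2$. The residue at $q_j$ does not need separate verification, because it vanishes by the residue theorem (\ref{ResEta}). The main obstacle is purely bookkeeping: getting the orders of $w$ and $dz$ at $\infty$ right in each case, and confirming that no $\eta_j$ picks up an unintended pole at another branch point. I will therefore organize the verification as a table of divisors for $z$, $z\pm1$, $w$ and $dz$ in each case, and read off each $\eta_j$ from that table.
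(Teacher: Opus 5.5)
Your proposal is correct and follows the same route as the paper. The paper likewise reads off the divisors of the $\eta_j$ from the orders of $z$, $z\pm1$, $w$ and $dz$ at the four totally ramified points; in case (1) it gets, for example, $(\eta_1)=2g\cdot\infty-2\cdot(1,0)$ and $(\eta_2)=2g\cdot(1,0)-2\cdot\infty$, exactly as you do, and it treats the remaining cases as similar computations.
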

\begin{proof}
In case $(1)$, as divisors on $\Sigma_g$, we may write
\begin{align*}
(\eta_1)&=\left(\frac{dz}{w(z-1)}\right)=2g\cdot\infty-2\cdot(1,0),\\
(\eta_2)&=\left(\frac{(z-1)dz}{w}\right)=2g\cdot(1,0)-2\cdot\infty,\\
(\eta_3)&=\left(\frac{wdz}{(z+1)^2(z-1)}\right)=2g\cdot(0,0)-2\cdot(-1,0).
\end{align*}
As the points $q_1$, $q_2$, and $q_3$ correspond to $(1,0)$, $\infty$, and $(-1,0)$, respectively, it follows that the differentials above have double poles precisely at these points, confirming that they are the desired differentials. The remaining cases can be verified through similar calculations, completing the proof.
\end{proof}
Recall that in Proposition \ref{prop59}, the values $N_{q_2}$ and $N_{q_3}$ are also given. These values are related to the image of the generalized Gauss map through (\ref{GMI}) and Lemma \ref{Lem56}. Using the differentials determined above along with the information on the generalized Gauss map, we can apply Proposition \ref{prop27} to express the Weierstrass data as follows.
\begin{lemma}\label{Lem64}
Let $\eta_1$, $\eta_2$, and $\eta_3$ be chosen as in Lemma \ref{Lem63}. In the same order, the Weierstrass data can be expressed as follows:
\begin{itemize}
\item[(1)] On $\Sigma_g=\left\{(z, w)\in\left(\mathbb{C}\cup\{\infty\}\right)^2\ |\ w^{g+1}=z^g(z+1)^g(z-1)\right\}$,
\begin{align*}
&\phi_1=h_1\left(\frac{z(z+1)}{w}\right)\frac{dz}{w}+\alpha\eta_1+\beta\eta_2+0,\\
&\phi_2=h_2\left(\frac{z(z+1)}{w}\right)\frac{dz}{w}+i\alpha\eta_1+i\beta\eta_2+0,\\
&\phi_3=h_3\left(\frac{z(z+1)}{w}\right)\frac{dz}{w}+0+\beta\eta_2+\gamma\eta_3,\\
&\phi_4=h_4\left(\frac{z(z+1)}{w}\right)\frac{dz}{w}+0-i\beta\eta_2+i\gamma\eta_3,
\end{align*}
for some polynomials $h_1(t)$, $h_2(t)$, $h_3(t)$, and $h_4(t)$ of degree at most $g-1$ and nonzero complex numbers $\alpha$, $\beta$, and $\gamma$.\\
\item[(2)] On $\Sigma_g=\left\{(z, w)\in\left(\mathbb{C}\cup\{\infty\}\right)^2\ |\ w^{g+1}=z(z+1)^g(z-1)\right\}$,
\begin{align*}
&\phi_1=h_1\left(\frac{z+1}{w}\right)\frac{dz}{w}+\alpha\eta_1+\beta\eta_2+0,\\
&\phi_2=h_2\left(\frac{z+1}{w}\right)\frac{dz}{w}+i\alpha\eta_1-i\beta\eta_2+0,\\
&\phi_3=h_3\left(\frac{z+1}{w}\right)\frac{dz}{w}+0+\beta\eta_2+\gamma\eta_3,\\
&\phi_4=h_4\left(\frac{z+1}{w}\right)\frac{dz}{w}+0+i\beta\eta_2+i\gamma\eta_3,
\end{align*}
for some polynomials $h_1(t)$, $h_2(t)$, $h_3(t)$, and $h_4(t)$ of degree at most $g-1$ and nonzero complex numbers $\alpha$, $\beta$, and $\gamma$.\\
\item[(3)] On $\Sigma_g=\left\{(z, w)\in\left(\mathbb{C}\cup\{\infty\}\right)^2\ |\ w^{g+1}=z^g(z+1)(z-1)\right\}$,
\begin{align*}
&\phi_1=h_1\left(\frac{z}{w}\right)\frac{dz}{w}+\alpha\eta_1+\beta\eta_2+0,\\
&\phi_2=h_2\left(\frac{z}{w}\right)\frac{dz}{w}+i\alpha\eta_1-i\beta\eta_2+0,\\
&\phi_3=h_3\left(\frac{z}{w}\right)\frac{dz}{w}+0+\beta\eta_2+\gamma\eta_3,\\
&\phi_4=h_4\left(\frac{z}{w}\right)\frac{dz}{w}+0+i\beta\eta_2-i\gamma\eta_3,
\end{align*}
for some polynomials $h_1(t)$, $h_2(t)$, $h_3(t)$, and $h_4(t)$ of degree at most $g-1$ and nonzero complex numbers $\alpha$, $\beta$, and $\gamma$.\\
\item[(4)] On $\Sigma_3=\left\{(z, w)\in\left(\mathbb{C}\cup\{\infty\}\right)^2\ |\ w^4=z(z+1)(z-1)\right\}$,
\begin{align*}
&\phi_1=\left(h_{10}+h_{11}z+h_{12}w\right)\frac{dz}{w^3}+\alpha\eta_1+\beta\eta_2+0,\\
&\phi_2=\left(h_{20}+h_{21}z+h_{22}w\right)\frac{dz}{w^3}+i\alpha\eta_1+i\beta\eta_2+0,\\
&\phi_3=\left(h_{30}+h_{31}z+h_{32}w\right)\frac{dz}{w^3}+0+\beta\eta_2+\gamma\eta_3,\\
&\phi_4=\left(h_{40}+h_{41}z+h_{42}w\right)\frac{dz}{w^3}+0-i\beta\eta_2-i\gamma\eta_3,
\end{align*}
for some complex numbers $h_{ab}$ $(1\leq a\leq4, 0\leq b\leq2)$ and nonzero complex numbers $\alpha$, $\beta$, and $\gamma$.
\end{itemize}
\end{lemma}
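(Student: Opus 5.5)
The plan is to obtain the lemma directly from Proposition~\ref{prop27}, feeding in three ingredients. The first is the explicit basis of $H^0(\Sigma_g,K)$ from the preceding lemma on holomorphic differentials. The second is the differentials $\eta_1,\eta_2,\eta_3$ of Lemma~\ref{Lem63}. The third is the values of the generalized Gauss map at the ends, which are read off from the integers $N_{q_j}$ of Proposition~\ref{prop59}.

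First, in each case, the holomorphic $1$-forms $\omega_1,\dots,\omega_4$ in Proposition~\ref{prop27} lie in $H^0(\Sigma_g,K)$. By the preceding lemma, they can therefore be written as $h_j(\cdot)\,\frac{dz}{w}$ with $\deg h_j\leq g-1$, with argument $\frac{z(z+1)}{w}$, $\frac{z+1}{w}$, $\frac{z}{w}$ in cases (1)--(3). In case (4) they take the form $(h_{j0}+h_{j1}z+h_{j2}w)\frac{dz}{w^3}$. Proposition~\ref{prop27} holds for any choice of $\eta_j$ satisfying (\ref{ETA}), so the forms of Lemma~\ref{Lem63} may be used.

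Second, I would determine the signs $\sigma_j$ in (\ref{GMI}). The normalization $\Phi(q_1)=[1,i,0,0]$ gives $\sigma_1=1$. By Lemma~\ref{Lem56}, $N_{q_j}\equiv\sigma_j \pmod{g+1}$. Since $g+1\geq 3$, the residues $1$ and $g\equiv -1$ are distinct, so $N_{q_j}=1$ forces $\sigma_j=1$ and $N_{q_j}=g$ forces $\sigma_j=-1$. Reading off the triples from Proposition~\ref{prop59} gives:
\begin{align*}
&\text{case (1): } (\sigma_2,\sigma_3)=(1,-1), &&\text{case (2): } (\sigma_2,\sigma_3)=(-1,-1),\\
&\text{case (3): } (\sigma_2,\sigma_3)=(-1,1), &&\text{case (4): } (\sigma_2,\sigma_3)=(1,1).
\end{align*}

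Third, I would substitute $\Phi(q_1)=[1,i,0,0]$, $\Phi(q_2)=[1,\sigma_2 i,1,-\sigma_2 i]$ and $\Phi(q_3)=[0,0,1,-\sigma_3 i]$ into Proposition~\ref{prop27}. This gives:
\begin{itemize}
\item the coefficient of $\eta_1$ in $(\phi_1,\phi_2,\phi_3,\phi_4)$ is $(\alpha, i\alpha, 0, 0)$;
\item the coefficient of $\eta_2$ is $(\beta, \sigma_2 i\beta, \beta, -\sigma_2 i\beta)$;
\item the coefficient of $\eta_3$ is $(0,0,\gamma,-\sigma_3 i\gamma)$.
\end{itemize}
Inserting the signs above reproduces the four displayed systems case by case. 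For example, in case (1) one gets $+i\beta$ in $\phi_2$, $-i\beta$ in $\phi_4$, and $+i\gamma$ in $\phi_4$. Nonvanishing of $\alpha,\beta,\gamma$ is part of Proposition~\ref{prop27}.

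The only delicate point is the bookkeeping between the geometric $N_{q_j}$ and the model covering. One must make sure that the identification in Proposition~\ref{prop59}, in particular $N_\infty=N_{q_2}$ with respect to $\nu_g$, matches the sign convention of Lemma~\ref{Lem56}, so that each sign $\sigma_j$ lands in the right slot. Since Proposition~\ref{prop59} asserts exactly this correspondence, the rest is direct substitution.
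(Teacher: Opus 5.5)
Your proposal is correct and follows the same route as the paper. The paper likewise obtains the lemma by substituting three ingredients into Proposition~\ref{prop27}: the holomorphic-differential basis, the forms $\eta_j$ of Lemma~\ref{Lem63}, and the signs $\sigma_j$ read off from the $N_{q_j}$ of Proposition~\ref{prop59} via (\ref{GMI}) and Lemma~\ref{Lem56}; your sign table $(\sigma_2,\sigma_3)=(1,-1),(-1,-1),(-1,1),(1,1)$ for cases (1)--(4) is exactly what produces the displayed coefficients.
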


\subsection{Symmetry reduction}\label{subsec64}
We note that in all four cases of Lemma \ref{Lem63}, $\eta_1$ can be chosen in the same form. This observation leads to the following lemma.
\begin{lemma}
The Weierstrass data $\phi_1$, $\phi_2$, $\phi_3$, and $\phi_4$ satisfy 
\begin{align}\label{PullBack}
\begin{pmatrix}
\nu_g^*\phi_1\\ \nu_g^*\phi_2\\ \nu_g^*\phi_3\\ \nu_g^*\phi_4
\end{pmatrix}=A_{\mathcal{H}}\begin{pmatrix}
\phi_1\\ \phi_2\\ \phi_3\\ \phi_4
\end{pmatrix},
\end{align}
where $\nu_g(z,w)=\left(z, e^{i\frac{2\pi}{g+1}}w\right)$ as defined in Proposition \ref{prop59}.
\end{lemma}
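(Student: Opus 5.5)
The plan is to derive (\ref{PullBack}) from the equivariance $(A_{\mathcal{H}}, b_{\mathcal{H}})\circ X=X\circ\tilde{\mu}_{\mathcal{H}}$, and then to check that under the identification of Proposition \ref{prop59} the biholomorphism $\tilde{\mu}_{\mathcal{H}}$ corresponds to $\nu_g$ itself and not to some other generator $\nu_g^k$.

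\emph{Step 1: equivariance of the Weierstrass data.} Write $\Phi=(\phi_1,\phi_2,\phi_3,\phi_4)^T=2X_z\,dz=2\partial X$. Since $X\circ\tilde{\mu}_{\mathcal{H}}=A_{\mathcal{H}}X+b_{\mathcal{H}}$ on $\Sigma_g\setminus\{q_1,q_2,q_3\}$ and $\tilde{\mu}_{\mathcal{H}}$ is holomorphic, applying $\partial$ and pulling back gives
\begin{align*}
\tilde{\mu}_{\mathcal{H}}^*\Phi=2\partial(X\circ\tilde{\mu}_{\mathcal{H}})=A_{\mathcal{H}}\,\Phi .
\end{align*}
The constant $b_{\mathcal{H}}$ drops out under differentiation. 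This identity of meromorphic $1$-forms then extends across the punctures.

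\emph{Step 2: identify $\tilde{\mu}_{\mathcal{H}}$ with $\nu_g$.} The coverings are equivalent, and both $\tilde{\mu}_{\mathcal{H}}$ and $\nu_g$ generate the deck group. Hence, under the identification, $\tilde{\mu}_{\mathcal{H}}=\nu_g^k$ for some $k$ coprime to $g+1$. It remains to show $k\equiv 1 \pmod{g+1}$.

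\begin{itemize}
\item In all four cases of Lemma \ref{Lem63} we have $\eta_1=\frac{dz}{w(z-1)}$. Therefore $\nu_g^*\eta_1=e^{-i\frac{2\pi}{g+1}}\eta_1$, and so $(\nu_g^k)^*\eta_1=e^{-ik\theta}\eta_1$ with $\theta=\frac{2\pi}{g+1}$.
\item By Lemma \ref{Lem64}, the only term with a pole at $q_1=(1,0)$ is $\eta_1$. It enters $\phi_1$ and $\phi_2$ with coefficients $\alpha$ and $i\alpha$, and is absent from $\phi_3$ and $\phi_4$.
\item The other terms ($\eta_2$, $\eta_3$ and the holomorphic forms) are built from $z$, $w$, $dz$, so each pulls back to a multiple of itself. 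None of them has a pole at $q_1$. Since $\nu_g$ fixes $q_1$, the double-pole part of $(\nu_g^k)^*\phi_1$ at $q_1$ is $\alpha e^{-ik\theta}\eta_1$.
\item On the other side, the first row of $A_{\mathcal{H}}$ is $(\cos\theta,-\sin\theta,0,0)$. So the first component of $A_{\mathcal{H}}\Phi$ has double-pole part $(\cos\theta-i\sin\theta)\alpha\eta_1=e^{-i\theta}\alpha\eta_1$ at $q_1$.
\end{itemize}

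Since $\alpha\neq0$, comparing the two sides gives $e^{-ik\theta}=e^{-i\theta}$, hence $k\equiv1$. With $\tilde{\mu}_{\mathcal{H}}=\nu_g$, Step 1 becomes exactly (\ref{PullBack}).

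\emph{Expected obstacle.} The main subtlety is Step 2: making sure the identification in Proposition \ref{prop59} does not secretly match $\tilde{\mu}_{\mathcal{H}}$ with a different power of $\nu_g$. The matching of the integers $N_\bullet$ should already force this, but the leading-coefficient comparison at $q_1$ gives a self-contained check. It uses only the normalization $\Phi(q_1)=[1,i,0,0]$ and the common form of $\eta_1$, which is presumably why the statement stresses that $\eta_1$ is the same in all four cases.
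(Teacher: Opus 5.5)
Your proof is correct, and it reorganizes the paper's argument in a useful way.

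\textbf{The paper's route.} The paper works projectively. It starts from the equivariance $\nu_g^*\Phi=A_{\mathcal{H}}\Phi$ of the generalized Gauss map in $\mathbb{P}^3$. This only yields $(\nu_g^*\phi_j)_j=f\,A_{\mathcal{H}}(\phi_j)_j$ for some function $f$. The paper then compares the $\eta_1$-coefficients, using $\nu_g^*\eta_1=e^{-i\frac{2\pi}{g+1}}\eta_1$ and $\alpha\neq0$, to conclude $f\equiv1$. Throughout, it takes for granted that the identification of Proposition~\ref{prop59} carries $\tilde{\mu}_{\mathcal{H}}$ to $\nu_g$ itself.

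\textbf{Your route.} You differentiate $X\circ\tilde{\mu}_{\mathcal{H}}=A_{\mathcal{H}}X+b_{\mathcal{H}}$. This gives the exact, non-projective identity $\tilde{\mu}_{\mathcal{H}}^*\Phi=A_{\mathcal{H}}\Phi$, with no scalar factor to normalize. You then spend the same $\eta_1$ computation on the point the paper leaves implicit: that $\tilde{\mu}_{\mathcal{H}}$ corresponds to $\nu_g$ and not to another generator $\nu_g^k$.

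\textbf{What each buys.} Your version is more rigorous. The paper's comparison of ``coefficients of $\eta_1$'' only makes sense once $f$ is known to be constant, and that is exactly what your differentiation supplies. Your generator check also uses only $\Phi(q_1)=[1,i,0,0]$ and the common form of $\eta_1$, not the conventions behind the integers $N_\bullet$. The paper's version is shorter and stays in the Gauss-map language.

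\textbf{A small inaccuracy.} A general holomorphic form such as $h\left(\frac{z(z+1)}{w}\right)\frac{dz}{w}$ does not pull back under $\nu_g$ to a multiple of itself; only the monomials do. This is harmless. You only need that a form regular at $q_1$ stays regular at $q_1$ after pullback by a biholomorphism fixing $q_1$.
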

\begin{proof}
Since the symmetry of $S_g$ induces a symmetry of the generalized Gauss map, we have 
\begin{align*}
\nu_g^*\Phi=A_{\mathcal{H}}\Phi.
\end{align*}
This implies that
\begin{align*}
\begin{pmatrix}
\nu_g^*\phi_1\\ \nu_g^*\phi_2\\ \nu_g^*\phi_3\\ \nu_g^*\phi_4
\end{pmatrix}=fA_{\mathcal{H}}\begin{pmatrix}
\phi_1\\ \phi_2\\ \phi_3\\ \phi_4
\end{pmatrix}=f\begin{pmatrix}
\cos\left(\frac{2\pi}{g+1}\right)\phi_1-\sin\left(\frac{2\pi}{g+1}\right)\phi_2\\ \sin\left(\frac{2\pi}{g+1}\right)\phi_1+\cos\left(\frac{2\pi}{g+1}\right)\phi_2\\ \cos\left(\frac{2\pi}{g+1}\right)\phi_3+\sin\left(\frac{2\pi}{g+1}\right)\phi_4\\ -\sin\left(\frac{2\pi}{g+1}\right)\phi_3+\cos\left(\frac{2\pi}{g+1}\right)\phi_4
\end{pmatrix}
\end{align*}
for some function $f$ on $\Sigma_g$. On the other hand, we have
\begin{align*}
\nu_g^*\eta_1=\nu_g^*\left(\frac{dz}{w(z-1)}\right)=e^{-i\frac{2\pi}{g+1}}\eta_1.
\end{align*}
Using the expressions in Lemma \ref{Lem64} and comparing the coefficients of $\eta_1$ in the first components of the two sides, 
\begin{align*}
\nu_g^*\phi_1\quad \text{and}\quad f\left(\cos\left(\frac{2\pi}{g+1}\right)\phi_1-\sin\left(\frac{2\pi}{g+1}\right)\phi_2\right),
\end{align*}
we obtain
\begin{align*}
\alpha e^{-i\frac{2\pi}{g+1}}=f\alpha\left(\cos\left(\frac{2\pi}{g+1}\right)-i\sin\left(\frac{2\pi}{g+1}\right)\right)=f\alpha e^{-i\frac{2\pi}{g+1}}.
\end{align*}
Since $\alpha\neq 0$, it follows that $f\equiv1$. This completes the proof.
\end{proof}
Using the above lemma, we can further simplify the holomorphic differential parts of the Weierstrass data in Lemma \ref{Lem64}.
\begin{lemma}\label{Lem66}
In cases $(1)$, $(2)$, and $(3)$ of Lemma \ref{Lem64}, the polynomials $h_1, h_2, h_3, h_4$ must have the following forms in order to satisfy (\ref{PullBack}):
\begin{align*}
&h_1(t)=a_0+a_1t^{g-1},\quad h_2(t)=ia_0-ia_1t^{g-1},\\
&h_3(t)=b_0+b_1t^{g-1},\quad h_4(t)=-ib_0+ib_1t^{g-1}
\end{align*}
for some $a_0, a_1, b_0, b_1\in\mathbb{C}$.
\end{lemma}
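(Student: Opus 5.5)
The plan is to diagonalize the action of $\nu_g^*$ on both sides of (\ref{PullBack}), and then compare eigen-components on the holomorphic part of the Weierstrass data. Throughout, set $\theta:=\frac{2\pi}{g+1}$ and $\varepsilon:=e^{i\theta}$.

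\textbf{Step 1: eigen-decomposition of the holomorphic differentials.} In each of cases (1)--(3), let $t$ denote the function $\frac{z(z+1)}{w}$, $\frac{z+1}{w}$, or $\frac{z}{w}$, respectively. Since $\nu_g(z,w)=(z,\varepsilon w)$, we have
\begin{align*}
\nu_g^*t=\varepsilon^{-1}t,\qquad \nu_g^*\left(\frac{dz}{w}\right)=\varepsilon^{-1}\frac{dz}{w}.
\end{align*}
Hence the basis element $t^l\frac{dz}{w}$, for $0\leq l\leq g-1$, is an eigenvector of $\nu_g^*$ with eigenvalue $\varepsilon^{-(l+1)}$. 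Because $l+1$ ranges over $1,\dots,g$, these eigenvalues are pairwise distinct modulo $g+1$. In particular, $\varepsilon^{-1}$ occurs only for $l=0$, and $\varepsilon=\varepsilon^{-g}$ occurs only for $l=g-1$; this uses $g\geq 2$.

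\textbf{Step 2: split (\ref{PullBack}) into holomorphic and polar parts.} The pullback $\nu_g^*$ preserves $H^0(\Sigma_g,K)$. Since $\nu_g$ fixes each $q_j$, the explicit forms in Lemma \ref{Lem63} show that each $\eta_j$ is also an eigenvector of $\nu_g^*$. Because the decomposition (\ref{Decomp1}) is direct, (\ref{PullBack}) holds separately for the holomorphic parts $\omega_k=h_k(t)\frac{dz}{w}$. This separation is the only point needing care, and it follows from the decomposition being direct.

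\textbf{Step 3: diagonalize $A_{\mathcal{H}}$.} Using the block form of $A_{\mathcal{H}}$, a direct computation gives
\begin{align*}
\nu_g^*(\omega_1-i\omega_2)&=\varepsilon^{-1}(\omega_1-i\omega_2), & \nu_g^*(\omega_1+i\omega_2)&=\varepsilon(\omega_1+i\omega_2),\\
\nu_g^*(\omega_3+i\omega_4)&=\varepsilon^{-1}(\omega_3+i\omega_4), & \nu_g^*(\omega_3-i\omega_4)&=\varepsilon(\omega_3-i\omega_4).
\end{align*}

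\textbf{Step 4: read off the polynomials.} By Step 1, the $\varepsilon^{-1}$-eigenspace is spanned by $\frac{dz}{w}$ and the $\varepsilon$-eigenspace by $t^{g-1}\frac{dz}{w}$. Therefore
\begin{align*}
h_1-ih_2=2a_0,\quad h_1+ih_2=2a_1t^{g-1},\quad h_3+ih_4=2b_0,\quad h_3-ih_4=2b_1t^{g-1}
\end{align*}
for some constants $a_0,a_1,b_0,b_1\in\mathbb{C}$. Solving these linear equations gives $h_1=a_0+a_1t^{g-1}$, $h_2=ia_0-ia_1t^{g-1}$, $h_3=b_0+b_1t^{g-1}$, and $h_4=-ib_0+ib_1t^{g-1}$, which is the claimed form.
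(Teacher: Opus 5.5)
Your proof is correct and is essentially the paper's argument: the paper also passes to $h_1\pm ih_2$ and $h_3\pm ih_4$ and solves the resulting scaling identities $\rho^{-1}P(t/\rho)=\rho^{\pm1}P(t)$ coefficientwise, which is exactly your comparison of the eigenvalues $\varepsilon^{-(l+1)}$ on the basis $t^l\frac{dz}{w}$. Your Step 2 restricts (\ref{PullBack}) to the holomorphic parts using the eigenvector property of the $\eta_j$ and the directness of (\ref{Decomp1}), which makes explicit a step the paper leaves implicit.
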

\begin{proof}
Since the proof is nearly identical in all cases, we provide the argument only for case $(1)$. Substituting the Weierstrass data $\phi_j$ in case $(1)$ of Lemma \ref{Lem64} into (\ref{PullBack}), we have
\begin{align*}
&\begin{pmatrix}
\nu_g^*\phi_1\\ \nu_g^*\phi_2\\ \nu_g^*\phi_3\\ \nu_g^*\phi_4
\end{pmatrix}-A_{\mathcal{H}}\begin{pmatrix}
\phi_1\\ \phi_2\\ \phi_3\\ \phi_4
\end{pmatrix}\\
&=\begin{pmatrix}
h_1\left(\frac{z(z+1)}{\rho w}\right)\frac{dz}{\rho w}-\cos\left(\frac{2\pi}{g+1}\right)h_1\left(\frac{z(z+1)}{w}\right)\frac{dz}{w}+\sin\left(\frac{2\pi}{g+1}\right)h_2\left(\frac{z(z+1)}{w}\right)\frac{dz}{w}\\ 
h_2\left(\frac{z(z+1)}{\rho w}\right)\frac{dz}{\rho w}-\sin\left(\frac{2\pi}{g+1}\right)h_1\left(\frac{z(z+1)}{w}\right)\frac{dz}{w}-\cos\left(\frac{2\pi}{g+1}\right)h_2\left(\frac{z(z+1)}{w}\right)\frac{dz}{w}\\ 
h_3\left(\frac{z(z+1)}{\rho w}\right)\frac{dz}{\rho w}-\cos\left(\frac{2\pi}{g+1}\right)h_3\left(\frac{z(z+1)}{w}\right)\frac{dz}{w}-\sin\left(\frac{2\pi}{g+1}\right)h_4\left(\frac{z(z+1)}{w}\right)\frac{dz}{w}\\
h_4\left(\frac{z(z+1)}{\rho w}\right)\frac{dz}{\rho w}+\sin\left(\frac{2\pi}{g+1}\right)h_3\left(\frac{z(z+1)}{w}\right)\frac{dz}{w}-\cos\left(\frac{2\pi}{g+1}\right)h_4\left(\frac{z(z+1)}{w}\right)\frac{dz}{w}
\end{pmatrix}\\
&=0
\end{align*}
for all $(z,w)\in\Sigma_g$, where $\rho=e^{i\frac{2\pi}{g+1}}$. Since $\frac{z(z+1)}{w}$ can take infinitely many values, the polynomials $h_1$, $h_2$, $h_3$, and $h_4$ must satisfy the following identities:
\begin{align*}
\frac{1}{\rho} h_1\left(\frac{t}{\rho}\right)&=\cos\left(\frac{2\pi}{g+1}\right)h_1(t)-\sin\left(\frac{2\pi}{g+1}\right)h_2(t),\\
\frac{1}{\rho} h_2\left(\frac{t}{\rho}\right)&=\sin\left(\frac{2\pi}{g+1}\right)h_1(t)+\cos\left(\frac{2\pi}{g+1}\right)h_2(t),\\
\frac{1}{\rho} h_3\left(\frac{t}{\rho}\right)&=\cos\left(\frac{2\pi}{g+1}\right)h_3(t)+\sin\left(\frac{2\pi}{g+1}\right)h_4(t),\\
\frac{1}{\rho} h_4\left(\frac{t}{\rho}\right)&=-\sin\left(\frac{2\pi}{g+1}\right)h_3(t)+\cos\left(\frac{2\pi}{g+1}\right)h_4(t).
\end{align*}

Define $A_1(t):=h_1(t)+i h_2(t)$ and $A_2(t):=h_1(t)-i h_2(t)$. Then the first two identities imply that
\begin{align*}
\frac{1}{\rho}A_1\left(\frac{t}{\rho}\right)=\rho A_1(t),\quad \frac{1}{\rho}A_2\left(\frac{t}{\rho}\right)=\frac{1}{\rho} A_2(t).
\end{align*}
The only polynomials of degree at most $g-1$ satisfying these identities are 
\begin{align*}
A_1(t)=2a_1t^{g-1},\quad A_2(t)=2a_0
\end{align*}
for some $a_0, a_1\in\mathbb{C}$. This leads to the expressions
\begin{align*}
h_1(t)=a_0+a_1t^{g-1},\quad h_2(t)=ia_0-ia_1t^{g-1}.
\end{align*}

Similarly, define $B_1(t):=h_3(t)+i h_4(t)$ and $B_2(t):=h_3(t)-i h_4(t)$. The last two identities yield
\begin{align*}
\frac{1}{\rho}B_1\left(\frac{t}{\rho}\right)=\frac{1}{\rho} B_1(t),\quad \frac{1}{\rho}B_2\left(\frac{t}{\rho}\right)=\rho B_2(t).
\end{align*}
Thus, we obtain
\begin{align*}
B_1(t)=2b_0,\quad B_2(t)=2b_1t^{g-1}
\end{align*}
for some $b_0, b_1\in\mathbb{C}$, which implies that
\begin{align*}
h_3(t)=b_0+b_1t^{g-1},\quad h_4(t)=-ib_0+ib_1t^{g-1}.
\end{align*}
\end{proof}

\begin{lemma}\label{Lem67}
In case $(4)$ of Lemma \ref{Lem64}, the coefficients $h_{ab}$ must satisfy the following relations for (\ref{PullBack}) to hold:
\begin{align*}
&h_{20}=-ih_{10},\quad h_{21}=-ih_{11},\quad h_{12}=h_{22}=0,\\
&h_{40}=ih_{30},\quad h_{41}=ih_{31},\quad h_{32}=h_{42}=0.
\end{align*}
\end{lemma}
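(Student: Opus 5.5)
The plan is to substitute the case $(4)$ data of Lemma \ref{Lem64} into (\ref{PullBack}) and compare coefficients. For $g=3$ we have $\rho=e^{i\frac{2\pi}{4}}=i$, so $A_{\mathcal{H}}$ has $\cos\left(\frac{2\pi}{4}\right)=0$ and $\sin\left(\frac{2\pi}{4}\right)=1$. Then (\ref{PullBack}) reads
\begin{align*}
\nu_3^*\phi_1=-\phi_2,\quad \nu_3^*\phi_2=\phi_1,\quad \nu_3^*\phi_3=\phi_4,\quad \nu_3^*\phi_4=-\phi_3.
\end{align*}

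The first step is to split each $\phi_a$ into eigencomponents of $\nu_3^*$. Since $\nu_3(z,w)=(z,iw)$, we compute
\begin{align*}
\nu_3^*\left(\frac{dz}{w^3}\right)=i^{-3}\frac{dz}{w^3}=i\,\frac{dz}{w^3},\quad \nu_3^*\left(\frac{z\,dz}{w^3}\right)=i\,\frac{z\,dz}{w^3},\quad \nu_3^*\left(\frac{w\,dz}{w^3}\right)=i^{-2}\frac{w\,dz}{w^3}=-\frac{w\,dz}{w^3}.
\end{align*}
All three $\eta_j$ of Lemma \ref{Lem63}(4) have the form $\frac{dz}{w}\cdot(\text{rational in }z)$, so they lie in the $i^{-1}=-i$ eigenspace. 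The eigenvalues $i$, $-1$, $-i$ are distinct, and $\frac{dz}{w^3}$, $\frac{z\,dz}{w^3}$, $\frac{dz}{w^2}$, $\eta_1$, $\eta_2$, $\eta_3$ are linearly independent by (\ref{Decomp1}). Hence (\ref{PullBack}) decouples into separate identities for the coefficients of each of these six differentials. The $\eta$-part is automatically consistent, as already seen in the proof of (\ref{PullBack}) via $f\equiv1$.

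The second step is to read off the holomorphic coefficients. For the coefficient of $\frac{dz}{w^3}$ (eigenvalue $i$), the first two equations give $ih_{10}=-h_{20}$ and $ih_{20}=h_{10}$, which together say exactly $h_{20}=-ih_{10}$. The last two give $ih_{30}=h_{40}$ and $ih_{40}=-h_{30}$, i.e. $h_{40}=ih_{30}$. The same computation for $\frac{z\,dz}{w^3}$ yields $h_{21}=-ih_{11}$ and $h_{41}=ih_{31}$. For the coefficient of $\frac{dz}{w^2}$ (eigenvalue $-1$), we get $-h_{12}=-h_{22}$ and $-h_{22}=h_{12}$, forcing $h_{12}=h_{22}=0$. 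Likewise $-h_{32}=h_{42}$ and $-h_{42}=-h_{32}$ force $h_{32}=h_{42}=0$.

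There is no real obstacle in this argument; it is a finite linear computation. The only point needing care is the justification that coefficients may be compared separately. This rests on the eigenvalue bookkeeping above: the exponent of $w$ in each differential determines its eigenvalue, and the eigenvalues for the holomorphic and $\eta$ parts never coincide. One should also double-check the sign conventions of $A_{\mathcal{H}}$ in rows three and four, which involve $\mathcal{S}_{-\frac{2\pi}{g+1}}$.
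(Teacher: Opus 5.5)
Your proposal is correct and takes essentially the same approach as the paper. You substitute the case $(4)$ data into (\ref{PullBack}) with $\nu_3(z,w)=(z,iw)$, note that the $\eta$-terms cancel, and compare the coefficients of $\frac{dz}{w^3}$, $\frac{z\,dz}{w^3}$, $\frac{dz}{w^2}$; the paper does the same by writing out $\nu_3^*\phi-A_{\mathcal{H}}\phi$ as one vector and setting each coefficient to zero, and your eigenvector bookkeeping is just a tidier way of organizing that comparison.
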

\begin{proof}
In this case, we have $\nu_3(z,w)=(z,iw)$ as $g=3$. Substituting the Weierstrass data $\phi_j$ from case $(4)$ of Lemma \ref{Lem64} into (\ref{PullBack}), we obtain
\begin{align*}
\begin{pmatrix}
\nu_3^*\phi_1\\ \nu_3^*\phi_2\\ \nu_3^*\phi_3\\ \nu_3^*\phi_4
\end{pmatrix}-A_{\mathcal{H}}\begin{pmatrix}
\phi_1\\ \phi_2\\ \phi_3\\ \phi_4
\end{pmatrix}=\begin{pmatrix}
\left[(ih_{10}+h_{20})+(ih_{11}+h_{21})z-(h_{12}-h_{22})w\right]\frac{dz}{w^3}\\
\left[(ih_{20}-h_{10})+(ih_{21}-h_{11})z-(h_{22}+h_{12})w\right]\frac{dz}{w^3}\\
\left[(ih_{30}-h_{40})+(ih_{31}-h_{41})z-(h_{32}+h_{42})w\right]\frac{dz}{w^3}\\
\left[(ih_{40}+h_{30})+(ih_{41}+h_{31})z-(h_{42}-h_{32})w\right]\frac{dz}{w^3}
\end{pmatrix}=0.
\end{align*}
Since this identity must hold for all $(z,w)\in\Sigma_3$, each coefficient must vanish. This yields the following relations:
\begin{align*}
&h_{20}=-ih_{10},\quad h_{21}=-ih_{11},\quad h_{12}=h_{22}=0,\\
&h_{40}=ih_{30},\quad h_{41}=ih_{31},\quad h_{32}=h_{42}=0.
\end{align*}
This completes the proof.
\end{proof}

Combining Lemmas \ref{Lem66} and \ref{Lem67}, we obtain the following simplified expressions for the Weierstrass data in Lemma \ref{Lem64}.
\begin{lemma}\label{Lem68}
Let $\eta_1$, $\eta_2$, and $\eta_3$ be chosen as in Lemma \ref{Lem63}. Using the same numbering as in Proposition \ref{prop59}, the Weierstrass data can be expressed as follows:
\begin{itemize}
\item[(1)] On $\Sigma_g=\left\{(z, w)\in\left(\mathbb{C}\cup\{\infty\}\right)^2\ |\ w^{g+1}=z^g(z+1)^g(z-1)\right\}$,
\begin{align*}
&\phi_1=\left(a_0+a_1\left(\frac{z(z+1)}{w}\right)^{g-1}\right)\frac{dz}{w}+\alpha\eta_1+\beta\eta_2+0,\\
&\phi_2=\left(ia_0-ia_1\left(\frac{z(z+1)}{w}\right)^{g-1}\right)\frac{dz}{w}+i\alpha\eta_1+i\beta\eta_2+0,\\
&\phi_3=\left(b_0+b_1\left(\frac{z(z+1)}{w}\right)^{g-1}\right)\frac{dz}{w}+0+\beta\eta_2+\gamma\eta_3,\\
&\phi_4=\left(-ib_0+ib_1\left(\frac{z(z+1)}{w}\right)^{g-1}\right)\frac{dz}{w}+0-i\beta\eta_2+i\gamma\eta_3,
\end{align*}
for some $a_0, a_1, b_0, b_1\in\mathbb{C}$ and nonzero complex numbers $\alpha$, $\beta$, and $\gamma$.\\
\item[(2)] On $\Sigma_g=\left\{(z, w)\in\left(\mathbb{C}\cup\{\infty\}\right)^2\ |\ w^{g+1}=z(z+1)^g(z-1)\right\}$,
\begin{align*}
&\phi_1=\left(a_0+a_1\left(\frac{z+1}{w}\right)^{g-1}\right)\frac{dz}{w}+\alpha\eta_1+\beta\eta_2+0,\\
&\phi_2=\left(ia_0-ia_1\left(\frac{z+1}{w}\right)^{g-1}\right)\frac{dz}{w}+i\alpha\eta_1-i\beta\eta_2+0,\\
&\phi_3=\left(b_0+b_1\left(\frac{z+1}{w}\right)^{g-1}\right)\frac{dz}{w}+0+\beta\eta_2+\gamma\eta_3,\\
&\phi_4=\left(-ib_0+ib_1\left(\frac{z+1}{w}\right)^{g-1}\right)\frac{dz}{w}+0+i\beta\eta_2+i\gamma\eta_3,
\end{align*}
for some $a_0, a_1, b_0, b_1\in\mathbb{C}$ and nonzero complex numbers $\alpha$, $\beta$, and $\gamma$.\\
\item[(3)] On $\Sigma_g=\left\{(z, w)\in\left(\mathbb{C}\cup\{\infty\}\right)^2\ |\ w^{g+1}=z^g(z+1)(z-1)\right\}$,
\begin{align*}
&\phi_1=\left(a_0+a_1\left(\frac{z}{w}\right)^{g-1}\right)\frac{dz}{w}+\alpha\eta_1+\beta\eta_2+0,\\
&\phi_2=\left(ia_0-ia_1\left(\frac{z}{w}\right)^{g-1}\right)\frac{dz}{w}+i\alpha\eta_1-i\beta\eta_2+0,\\
&\phi_3=\left(b_0+b_1\left(\frac{z}{w}\right)^{g-1}\right)\frac{dz}{w}+0+\beta\eta_2+\gamma\eta_3,\\
&\phi_4=\left(-ib_0+ib_1\left(\frac{z}{w}\right)^{g-1}\right)\frac{dz}{w}+0+i\beta\eta_2-i\gamma\eta_3,
\end{align*}
for some $a_0, a_1, b_0, b_1\in\mathbb{C}$ and nonzero complex numbers $\alpha$, $\beta$, and $\gamma$.\\
\item[(4)] On $\Sigma_3=\left\{(z, w)\in\left(\mathbb{C}\cup\{\infty\}\right)^2\ |\ w^4=z(z+1)(z-1)\right\}$,
\begin{align*}
&\phi_1=\left(h_{10}+h_{11}z\right)\frac{dz}{w^3}+\alpha\eta_1+\beta\eta_2+0,\\
&\phi_2=\left(-ih_{10}-ih_{11}z\right)\frac{dz}{w^3}+i\alpha\eta_1+i\beta\eta_2+0,\\
&\phi_3=\left(h_{30}+h_{31}z\right)\frac{dz}{w^3}+0+\beta\eta_2+\gamma\eta_3,\\
&\phi_4=\left(ih_{30}+ih_{31}z\right)\frac{dz}{w^3}+0-i\beta\eta_2-i\gamma\eta_3,
\end{align*}
for some $h_{10}, h_{11}, h_{30}, h_{31}\in\mathbb{C}$ and nonzero complex numbers $\alpha$, $\beta$, and $\gamma$.
\end{itemize}
\end{lemma}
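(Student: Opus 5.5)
This lemma is a direct assembly of earlier results, so the plan is to substitute. I start from the general form of the Weierstrass data in Lemma \ref{Lem64}, case by case, in the same numbering as Proposition \ref{prop59}.

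\emph{Cases $(1)$--$(3)$.} Lemma \ref{Lem66} shows that compatibility with (\ref{PullBack}) forces
\begin{align*}
h_1(t)=a_0+a_1t^{g-1},\quad h_2(t)=ia_0-ia_1t^{g-1},\quad h_3(t)=b_0+b_1t^{g-1},\quad h_4(t)=-ib_0+ib_1t^{g-1}.
\end{align*}
I substitute $t=\frac{z(z+1)}{w}$, $\frac{z+1}{w}$, or $\frac{z}{w}$, respectively, into the holomorphic parts $h_j(t)\frac{dz}{w}$ of Lemma \ref{Lem64}. The meromorphic parts, namely the coefficients of $\eta_1,\eta_2,\eta_3$ together with their signs, are carried over unchanged from Lemma \ref{Lem64}. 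Those signs were already fixed by the $\sigma_j$ through (\ref{GMI}) and Lemma \ref{Lem56}, using the values $N_{q_2},N_{q_3}$ prescribed in Proposition \ref{prop59}.

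The one point to check is that the proof of Lemma \ref{Lem66}, which was written out only for case $(1)$, transfers to cases $(2)$ and $(3)$. For this I verify two facts in each case:
\begin{itemize}
\item[(i)] the relevant variable $t$ satisfies $\nu_g^*t=\rho^{-1}t$, where $\rho=e^{i\frac{2\pi}{g+1}}$;
\item[(ii)] $\nu_g^*\frac{dz}{w}=\rho^{-1}\frac{dz}{w}$.
\end{itemize}
Both hold, because each $t$ is a function of $z$ divided by $w$. With these in hand, the functional equations for $A_1,A_2,B_1,B_2$ are identical to those in case $(1)$, and so give the same monomial solutions.

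\emph{Case $(4)$.} Lemma \ref{Lem67} gives
\begin{align*}
h_{12}=h_{22}=h_{32}=h_{42}=0,\quad h_{20}=-ih_{10},\quad h_{21}=-ih_{11},\quad h_{40}=ih_{30},\quad h_{41}=ih_{31}.
\end{align*}
Substituting these into case $(4)$ of Lemma \ref{Lem64} yields the stated expressions directly.

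The main obstacle is only the bookkeeping of signs. I must make sure that (\ref{PullBack}) imposes no further constraint on $\alpha,\beta,\gamma$ beyond the sign patterns already recorded. To do this, I check that each $\eta_j$ transforms under $\nu_g^*$ by the power of $\rho$ that matches the action of $A_{\mathcal{H}}$ on the corresponding Gauss map vector $\Phi(q_j)$. This check is consistent with the values $N_{q_j}$ computed in Proposition \ref{prop59}.
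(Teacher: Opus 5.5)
Your proposal is correct and is exactly the paper's argument: Lemma \ref{Lem68} is obtained by substituting the constraints of Lemma \ref{Lem66} (cases $(1)$--$(3)$) and Lemma \ref{Lem67} (case $(4)$) into the general form of Lemma \ref{Lem64}. Your additional checks make explicit what the paper leaves implicit, and they do hold: that $\nu_g^*t=\rho^{-1}t$ and $\nu_g^*\frac{dz}{w}=\rho^{-1}\frac{dz}{w}$ in each case, and that each $\eta_j$ transforms under $\nu_g^*$ compatibly with $A_{\mathcal{H}}$, so that (\ref{PullBack}) constrains only the holomorphic parts.
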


\subsection{Squared sum test}\label{subsec62}
We now check whether the Weierstrass data obtained in Lemma \ref{Lem68} can satisfy the condition that their squared sum vanishes identically.
\begin{lemma}\label{Lem69}
In case $(1)$ of Lemma \ref{Lem68}, the Weierstrass data cannot satisfy the identity $\sum_{j=1}^4\phi_j^2\equiv0$.
\end{lemma}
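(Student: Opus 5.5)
The plan is to factor the quadratic form $\sum_j\phi_j^2$ into products of the null combinations $\phi_1\pm i\phi_2$ and $\phi_3\pm i\phi_4$, and then read off pole orders at the ends. Since $\phi_1^2+\phi_2^2=(\phi_1+i\phi_2)(\phi_1-i\phi_2)$ and similarly for $\phi_3,\phi_4$, I first compute these four combinations from case $(1)$ of Lemma \ref{Lem68}.

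\textbf{Step 1: the null combinations.} Write $t=\frac{z(z+1)}{w}$. The symmetry reduction of Lemma \ref{Lem66} separates the coefficients cleanly, and a direct check gives
\begin{align*}
&\phi_1+i\phi_2=2a_1t^{g-1}\frac{dz}{w},\qquad \phi_1-i\phi_2=2a_0\frac{dz}{w}+2\alpha\eta_1+2\beta\eta_2,\\
&\phi_3+i\phi_4=2b_0\frac{dz}{w}+2\beta\eta_2,\qquad \phi_3-i\phi_4=2b_1t^{g-1}\frac{dz}{w}+2\gamma\eta_3.
\end{align*}
Hence
\begin{align*}
\tfrac14\sum_{j=1}^4\phi_j^2=a_1t^{g-1}\frac{dz}{w}\left(a_0\frac{dz}{w}+\alpha\eta_1+\beta\eta_2\right)+\left(b_0\frac{dz}{w}+\beta\eta_2\right)\left(b_1t^{g-1}\frac{dz}{w}+\gamma\eta_3\right).
\end{align*}

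\textbf{Step 2: local orders at $q_1=(1,0)$.} I will use the divisors already computed in the proofs of the two lemmas on differentials:
\begin{itemize}
\item $\frac{dz}{w}$ has zeros of order $g-1$ at $(1,0)$ and $\infty$, and no other zeros or poles;
\item $t$ has simple zeros at $(0,0)$ and $(-1,0)$, and simple poles at $(1,0)$ and $\infty$;
\item $(\eta_1)=2g\cdot\infty-2\cdot(1,0)$, $(\eta_2)=2g\cdot(1,0)-2\cdot\infty$, and $(\eta_3)=2g\cdot(0,0)-2\cdot(-1,0)$.
\end{itemize}
At $q_1$ this gives:
\begin{itemize}
\item $t^{g-1}\frac{dz}{w}$ has order $-(g-1)+(g-1)=0$, so it is holomorphic and nonvanishing;
\item $\eta_1$ has a double pole;
\item $\frac{dz}{w}$, $\eta_2$ and $\eta_3$ are holomorphic.
\end{itemize}
So the second product in Step 1 is holomorphic at $q_1$. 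The first product equals $a_1\alpha\, t^{g-1}\frac{dz}{w}\,\eta_1$ plus holomorphic terms, which has a pole of exact order $2$ unless $a_1\alpha=0$. Since $\alpha\neq0$, vanishing of $\sum\phi_j^2$ forces $a_1=0$.

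\textbf{Step 3: conclusion.} With $a_1=0$, the identity reduces to
\begin{align*}
\left(b_0\frac{dz}{w}+\beta\eta_2\right)\left(b_1t^{g-1}\frac{dz}{w}+\gamma\eta_3\right)\equiv0.
\end{align*}
The left side is a product of two meromorphic differentials on the connected surface $\Sigma_g$, so one factor must vanish identically. Neither can:
\begin{itemize}
\item The first factor has a double pole at $q_2=\infty$. There $\eta_2$ has a double pole while $\frac{dz}{w}$ is holomorphic, and $\beta\neq0$.
\item The second factor has a double pole at $q_3=(-1,0)$. There $\eta_3$ has a double pole while $t^{g-1}\frac{dz}{w}$ is holomorphic (indeed vanishing), and $\gamma\neq0$.
\end{itemize}
This contradiction proves the lemma.

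I expect the main obstacle to be bookkeeping rather than ideas. One must get the signs in the combinations $\phi_1\pm i\phi_2$ and $\phi_3\pm i\phi_4$ right, since a sign error would destroy the clean separation in Step 1. One must also verify the local order of $t^{g-1}\frac{dz}{w}$ at the branch points, which relies on $z-1$ and $z+1$ vanishing to order $g+1$ in the local parameters at $(1,0)$ and $(-1,0)$. I would double-check both against the divisor computations in the proof of Lemma \ref{Lem63} before writing out the final argument.
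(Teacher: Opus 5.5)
Your argument is correct: the null combinations are right and your expansion agrees term by term with the paper's. The paper's proof is more computational. It uses $w^{g+1}=z^g(z+1)^g(z-1)$ to rewrite $\sum_j\phi_j^2$ as a cubic polynomial in $z$ times $(dz)^2/\bigl(z(z+1)^2(z-1)^2\bigr)$. It then evaluates at $z=1$ to get $a_1=0$, reads off the $z^3$ coefficient to get $b_1=-\gamma$, and substitutes back to force $\beta\gamma=0$.

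Your first step, where the double pole at $q_1$ forces $a_1=0$, carries the same information as the paper's evaluation at $z=1$. Your second step replaces coefficient-matching with a structural observation. Once $\phi_1+i\phi_2\equiv0$, the identity becomes $(\phi_3+i\phi_4)(\phi_3-i\phi_4)\equiv0$. Neither factor can vanish identically, because each carries the double pole of an end ($q_2$ and $q_3$ respectively).

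This explains why the paper's polynomial, after $a_1=0$, factors as $(z-1)\bigl(b_0+\beta(z-1)\bigr)\bigl(b_1(z+1)+\gamma z\bigr)$. It also adapts directly to case $(2)$: there the pole at $q_3$ kills $b_0$, and the remaining product has factors with poles at $q_2$ and $q_1$. In exchange, the paper's route needs no local analysis at the branch points, since everything reduces to an elementary polynomial identity in $z$.
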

\begin{proof}
Using the expressions in case $(1)$ of Lemma \ref{Lem68}, together with the differentials
\begin{align*}
\eta_1=\frac{dz}{w(z-1)},\quad \eta_2=\frac{(z-1)dz}{w},\quad \eta_3=\frac{wdz}{(z+1)^2(z-1)}
\end{align*}
as given in Lemma \ref{Lem63}, we compute
\begin{align*}
&\sum_{j=1}^4\phi_j^2\\
&=4\left[(a_0a_1+b_0b_1)\left(\frac{z(z+1)}{w}\right)^{g-1}\frac{1}{w^2}+\alpha a_1\left(\frac{z(z+1)}{w}\right)^{g-1}\frac{1}{w^2(z-1)}\right.\\
&\qquad\left.+\beta(a_1+b_1)\left(\frac{z(z+1)}{w}\right)^{g-1}\frac{(z-1)}{w^2}+\gamma b_0\frac{1}{(z+1)^2(z-1)}+\beta\gamma\frac{1}{(z+1)^2}\right](dz)^2\\
&=4\left[(a_0a_1+b_0b_1)\frac{1}{z(z+1)(z-1)}+\alpha a_1\frac{1}{z(z+1)(z-1)^2}\right.\\
&\qquad\left.+\beta(a_1+b_1)\frac{1}{z(z+1)}+\gamma b_0\frac{1}{(z+1)^2(z-1)}+\beta\gamma\frac{1}{(z+1)^2}\right](dz)^2\\
&=4\left[(a_0a_1+b_0b_1)(z+1)(z-1)+\alpha a_1(z+1)+\beta(a_1+b_1)(z+1)(z-1)^2\right.\\
&\qquad\left.+\gamma b_0z(z-1)+\beta\gamma z(z-1)^2\right]\frac{(dz)^2}{z(z+1)^2(z-1)^2},
\end{align*}
where we applied the identity $w^{g+1}=z^g(z+1)^g(z-1)$ in the second equality. Since
\begin{align*}
\frac{(dz)^2}{z(z+1)^2(z-1)^2}
\end{align*}
vanishes at $(0,0)$ and $\infty$, we have
\begin{align}\label{EEE1}
(a_0a_1&+b_0b_1)(z+1)(z-1)+\alpha a_1(z+1)\nonumber\\
&+\beta(a_1+b_1)(z+1)(z-1)^2+\gamma b_0z(z-1)+\beta\gamma z(z-1)^2=0
\end{align}
for all $z\in\mathbb{C}\setminus\{0\}$, in order for the identity $\sum_{j=1}^4\phi_j^2\equiv0$ to hold.

Substituting $z=1$ into (\ref{EEE1}) gives $\alpha a_1=0$. Since $\alpha\neq0$, it follows that $a_1=0$. Next, looking at the coefficient of $z^3$, we obtain
\begin{align*}
\beta b_1+\beta\gamma=0,
\end{align*}
which implies that $b_1=-\gamma$ as $\beta\neq0$. Substituting back into (\ref{EEE1}), we now have
\begin{align*}
-(z-1)(\beta\gamma(z-1)+\gamma b_0)=0.
\end{align*}
This equation holds for all $z\in\mathbb{C}\setminus\{0\}$ if and only if $\beta\gamma=\gamma b_0=0$. This contradicts the assumption that both $\beta$ and $\gamma$ are nonzero. \end{proof}

\begin{lemma}\label{Lem610}
In case $(2)$ of Lemma \ref{Lem68}, the Weierstrass data cannot satisfy the identity $\sum_{j=1}^4\phi_j^2\equiv0$.
\end{lemma}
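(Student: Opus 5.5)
The plan is to follow the same pattern as Lemma \ref{Lem69}, but to exploit the null structure of $\mathscr{Q}_2$ by factoring instead of expanding everything. Writing $t=\frac{z+1}{w}$ and using the expressions of case $(2)$ of Lemma \ref{Lem68}, I will use the identity
\begin{align*}
\sum_{j=1}^4\phi_j^2=(\phi_1+i\phi_2)(\phi_1-i\phi_2)+(\phi_3+i\phi_4)(\phi_3-i\phi_4).
\end{align*}
I will then compute the four combinations directly from the data of case $(2)$.

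For the $\phi_3,\phi_4$ pair, I expect $\phi_3+i\phi_4=0$ identically. The holomorphic parts cancel, since $b_0+b_1t^{g-1}+i(-ib_0+ib_1t^{g-1})=b_0+b_1t^{g-1}+b_0-b_1t^{g-1}$; I need to recheck this sign. With $\phi_4=(-ib_0+ib_1t^{g-1})\frac{dz}{w}+i\beta\eta_2+i\gamma\eta_3$, one gets $i\phi_4=(b_0-b_1t^{g-1})\frac{dz}{w}-\beta\eta_2-\gamma\eta_3$. Hence $\phi_3+i\phi_4=2b_0\frac{dz}{w}$, and in particular the $\eta_2,\eta_3$ terms cancel. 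Similarly, $\phi_1+i\phi_2=2a_1t^{g-1}\frac{dz}{w}+2\beta\eta_2$ and $\phi_1-i\phi_2=2a_0\frac{dz}{w}+2\alpha\eta_1$. So the squared sum reduces to
\begin{align*}
4\left(a_1t^{g-1}\tfrac{dz}{w}+\beta\eta_2\right)\left(a_0\tfrac{dz}{w}+\alpha\eta_1\right)+(\phi_3+i\phi_4)(\phi_3-i\phi_4),
\end{align*}
and I will carry out this bookkeeping carefully, since the whole argument rests on these cancellations.

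Next I will analyze poles. The form $\eta_1=\frac{dz}{w(z-1)}$ has its only pole, a double pole, at $q_1=(1,0)$. The form $\eta_2=\frac{w\,dz}{z(z-1)}$ has its only pole, a double pole, at $q_2=\infty$. The form $\eta_3$ has its only pole at $q_3=(-1,0)$. The holomorphic parts contribute no poles. The coefficients $\alpha,\beta,\gamma$ are nonzero.

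I will then compare the most singular terms at $q_3$. The factor $\phi_3-i\phi_4$ contains $2\beta\eta_2+2\gamma\eta_3$ plus holomorphic terms, so it has a double pole at $q_3$. If $b_0\neq0$, then $\phi_3+i\phi_4=2b_0\frac{dz}{w}$ has a finite nonzero or vanishing value there, while the first product is holomorphic at $q_3$. From this I expect to force $b_0\frac{dz}{w}$ to vanish to order at least $2$ at $q_3$, or else conclude $b_0=0$. Checking the local order of $\frac{dz}{w}$ at $(-1,0)$ then pins this down.

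Once $b_0=0$, the squared sum becomes a product of two meromorphic differentials on the connected surface $\Sigma_g$. Such a product vanishes identically only if one factor does. However, $a_0\frac{dz}{w}+\alpha\eta_1$ has a genuine double pole at $q_1$, and $a_1t^{g-1}\frac{dz}{w}+\beta\eta_2$ has a genuine double pole at $q_2$, so neither factor vanishes. This gives the contradiction.

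The main obstacle I expect is the sign bookkeeping for $\phi_3\pm i\phi_4$, together with the pole-order analysis at $q_3$ that is needed to kill the cross term. If the analysis at $q_3$ becomes messy, I will fall back to the approach of Lemma \ref{Lem69}. That means expanding the squared sum and substituting $w^{g+1}=z(z+1)^g(z-1)$ to obtain a rational function of $z$ times $(dz)^2$. I would then evaluate at $z=\pm1$ and compare the leading coefficients to force $\beta\gamma=0$, which again contradicts the assumption that $\beta$ and $\gamma$ are nonzero.
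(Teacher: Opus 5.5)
Your argument is correct, and it takes a different route from the paper's. The paper expands $\sum_j\phi_j^2$ term by term and uses $w^{g+1}=z(z+1)^g(z-1)$ to reduce it to a polynomial identity in $z$. It then reads off coefficients: $z=-1$ gives $\gamma b_0=0$, the $z^3$ coefficient gives $\beta a_0=0$, and the leftover $(z+1)(\alpha\beta(z+1)+\alpha a_1)$ forces $\alpha\beta=0$.

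You instead split the quadric into null factors. Your formulas for $\phi_1\pm i\phi_2$ and $\phi_3+i\phi_4=2b_0\frac{dz}{w}$ are right, and $\phi_3-i\phi_4=2b_1t^{g-1}\frac{dz}{w}+2\beta\eta_2+2\gamma\eta_3$. The step you left tentative does close. At $(-1,0)$, take a local coordinate $s$ with $z+1=s^{g+1}$. Then $w=s^g\cdot(\text{unit})$, so $\frac{dz}{w}$ has order exactly $0$ there. Meanwhile $\phi_3-i\phi_4$ has order exactly $-2$ there, and $(\phi_1+i\phi_2)(\phi_1-i\phi_2)$ is holomorphic there. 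Hence $b_0=0$; this is the intrinsic version of the paper's evaluation at $z=-1$.

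After that, your observation replaces the paper's remaining coefficient comparisons. The squared sum is then a product of two differentials, nonzero because of their double poles at $q_1$ and $q_2$ respectively. This is cleaner, and it explains why case $(2)$ fails: the $\eta_2$ and $\eta_3$ contributions cancel out of $\phi_3+i\phi_4$.

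The paper's brute-force expansion buys uniformity. The same computation also produces the exact solvability conditions needed in cases $(3)$ and $(4)$ (Lemmas \ref{Lem611} and \ref{Lem612}), which a pure pole-order argument would not give directly.

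Your fallback is unnecessary. Note that in case $(2)$ the expansion ends in $\alpha\beta=0$, not $\beta\gamma=0$ as in case $(1)$.
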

\begin{proof}
The proof proceeds similarly to the previous lemma. In case $(2)$ of Lemma \ref{Lem68}, we compute
\begin{align*}
\sum_{j=1}^4\phi_j^2&=4\left[(a_0a_1+b_0b_1)\left(\frac{z+1}{w}\right)^{g-1}\frac{1}{w^2}+\alpha a_1\left(\frac{z+1}{w}\right)^{g-1}\frac{1}{w^2(z-1)}\right.\\
&\qquad\left.+\beta(a_0+b_0)\frac{1}{z(z-1)}+\gamma b_0\frac{1}{z(z+1)^2(z-1)}+\alpha\beta\frac{1}{z(z-1)^2}\right](dz)^2\\
&=4\left[(a_0a_1+b_0b_1)\frac{1}{z(z+1)(z-1)}+\alpha a_1\frac{1}{z(z+1)(z-1)^2}\right.\\
&\qquad\left.+\beta(a_0+b_0)\frac{1}{z(z-1)}+\gamma b_0\frac{1}{z(z+1)^2(z-1)}+\alpha\beta\frac{1}{z(z-1)^2}\right](dz)^2\\
&=4\left[(a_0a_1+b_0b_1)(z+1)(z-1)+\alpha a_1(z+1)+\beta(a_0+b_0)(z+1)^2(z-1)\right.\\
&\qquad\left.+\gamma b_0(z-1)+\alpha\beta (z+1)^2\right]\frac{(dz)^2}{z(z+1)^2(z-1)^2},
\end{align*}
where we used the identity $w^{g+1}=z(z+1)^g(z-1)$ in the second equality. As
\begin{align*}
\frac{(dz)^2}{z(z+1)^2(z-1)^2}
\end{align*}
vanishes at $(0,0)$ and $\infty$, the identity $\sum_{j=1}^4\phi_j^2\equiv0$ holds if and only if
\begin{align}\label{EEE2}
(a_0a_1&+b_0b_1)(z+1)(z-1)+\alpha a_1(z+1)\nonumber\\
&+\beta(a_0+b_0)(z+1)^2(z-1)+\gamma b_0(z-1)+\alpha\beta (z+1)^2=0
\end{align}
for all $z\in\mathbb{C}\setminus\{0\}$. 

Substituting $z=-1$ into (\ref{EEE2}) gives $\gamma b_0=0$. Since $\gamma\neq0$, we obtain $b_0=0$. Comparing the coefficient of $z^3$, we find $\beta a_0=0$, which implies that $a_0=0$ as $\beta\neq0$. Then (\ref{EEE2}) simplifies to
\begin{align*}
(z+1)(\alpha\beta(z+1)+\alpha a_1)=0.
\end{align*}
This equation holds for all $z\in\mathbb{C}\setminus\{0\}$ if and only if $\alpha\beta=\alpha a_1=0$. This contradicts the assumption that both $\alpha$ and $\beta$ are nonzero.
\end{proof}

In cases $(3)$ and $(4)$ of Lemma \ref{Lem68}, the squared sum vanishes for particular choices of the coefficients.

\begin{lemma}\label{Lem611}
The identity $\sum_{j=1}^4\phi_j^2\equiv0$ is satisfied in case $(3)$ of Lemma \ref{Lem68} if and only if
\begin{align*}
a_0=-b_0=\frac{\alpha+\gamma}{2},\quad a_1=-b_1=-\beta.
\end{align*}
\end{lemma}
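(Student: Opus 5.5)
The plan is to factor the squared sum using the null-coordinate combinations $\phi_1\pm i\phi_2$ and $\phi_3\pm i\phi_4$, since $\sum_{j=1}^4\phi_j^2=(\phi_1+i\phi_2)(\phi_1-i\phi_2)+(\phi_3+i\phi_4)(\phi_3-i\phi_4)$, as in the proofs of Lemmas \ref{Lem69} and \ref{Lem610}. Write $D:=\frac{dz}{w}$ and $T:=\left(\frac{z}{w}\right)^{g-1}\frac{dz}{w}$. From case $(3)$ of Lemma \ref{Lem68} one reads off
\begin{align*}
\phi_1+i\phi_2=2(a_1T+\beta\eta_2),\quad \phi_1-i\phi_2=2(a_0D+\alpha\eta_1),\\
\phi_3+i\phi_4=2(b_0D+\gamma\eta_3),\quad \phi_3-i\phi_4=2(b_1T+\beta\eta_2).
\end{align*}
Hence
\begin{align*}
\sum_{j=1}^4\phi_j^2=4\left[(a_1T+\beta\eta_2)(a_0D+\alpha\eta_1)+(b_1T+\beta\eta_2)(b_0D+\gamma\eta_3)\right].
\end{align*}

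Next I will use the curve equation $w^{g+1}=z^g(z+1)(z-1)$ and the differentials of Lemma \ref{Lem63}, case $(3)$. Each of the eight products is then a rational function of $z$ alone times $(dz)^2$. For example,
\begin{align*}
TD=\frac{z^{g-1}(dz)^2}{w^{g+1}}=\frac{(dz)^2}{z(z+1)(z-1)},\qquad \eta_2D=\frac{(dz)^2}{(z+1)(z-1)},\qquad \eta_2\eta_3=\frac{(dz)^2}{(z+1)^2(z-1)},
\end{align*}
and similarly for $T\eta_1$, $T\eta_3$ and $\eta_1\eta_2$. After multiplying by $z(z+1)^2(z-1)^2$, the identity $\sum_{j=1}^4\phi_j^2\equiv0$ becomes equivalent to the vanishing of the cubic polynomial
\begin{align*}
P(z)=\;&a_0a_1(z+1)(z-1)+\beta a_0z(z+1)(z-1)+\alpha a_1(z+1)+\alpha\beta z(z+1)\\
&+b_0b_1(z+1)(z-1)+\beta b_0z(z+1)(z-1)+\gamma b_1(z-1)+\beta\gamma z(z-1).
\end{align*}
This equivalence holds because $z$ is nonconstant on $\Sigma_g$.

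A cubic vanishes identically if and only if it vanishes at $z=1,-1,0$ and its $z^3$-coefficient is zero, since these four linear conditions determine the coefficients. The conditions are as follows.
\begin{itemize}
\item[(i)] At $z=1$ the condition is $2\alpha(a_1+\beta)=0$, so $a_1=-\beta$ because $\alpha\neq0$.
\item[(ii)] At $z=-1$ it is $2\gamma(\beta-b_1)=0$, so $b_1=\beta$ because $\gamma\neq0$.
\item[(iii)] The $z^3$-coefficient gives $\beta(a_0+b_0)=0$, so $b_0=-a_0$.
\item[(iv)] At $z=0$ it is $-a_0a_1+\alpha a_1-b_0b_1-\gamma b_1=0$. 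Substituting (i)--(iii) turns this into $\beta(2a_0-\alpha-\gamma)=0$, hence $a_0=\frac{\alpha+\gamma}{2}$.
\end{itemize}
Since the four conditions are equivalent to $P\equiv0$, this gives both directions of the lemma.

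The only delicate point is the bookkeeping in the factorization and in reducing each product via the curve equation. In particular, the sign pattern in $\phi_2,\phi_4$ from Lemma \ref{Lem68} must make the $a_0$ and $b_1$ terms, and the matching $\eta$-terms, pair correctly, so I will double-check each coefficient of $\phi_1\pm i\phi_2$ and $\phi_3\pm i\phi_4$ before evaluating $P$.
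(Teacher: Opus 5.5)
Your proposal is correct and is essentially the paper's proof. It uses the same reduction via $w^{g+1}=z^g(z+1)(z-1)$ to the cubic identity (\ref{EEE3}), followed by evaluation at $z=\pm1$ and the $z^3$-coefficient; the one cosmetic difference is that you extract $a_0=\frac{\alpha+\gamma}{2}$ from $z=0$ instead of substituting back and factoring out $\beta(z+1)(z-1)$, and your observation that the four linear conditions are equivalent to $P\equiv0$ also settles the converse that the paper leaves as a direct check.
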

\begin{proof}
A direct computation shows that
\begin{align*}
&\sum_{j=1}^4\phi_j^2\\
&=4\left[(a_0a_1+b_0b_1)\left(\frac{z}{w}\right)^{g-1}\frac{1}{w^2}+\alpha a_1\left(\frac{z}{w}\right)^{g-1}\frac{1}{w^2(z-1)}+\beta(a_0+b_0)\frac{1}{(z+1)(z-1)}\right.\\
&\qquad\left.+\gamma b_1\left(\frac{z}{w}\right)^{g-1}\frac{1}{w^2(z+1)}+\alpha\beta\frac{1}{(z+1)(z-1)^2}+\beta\gamma\frac{1}{(z+1)^2(z-1)}\right](dz)^2\\
&=4\left[(a_0a_1+b_0b_1)\frac{1}{z(z+1)(z-1)}+\alpha a_1\frac{1}{z(z+1)(z-1)^2}+\beta(a_0+b_0)\frac{1}{(z+1)(z-1)}\right.\\
&\qquad\left.+\gamma b_1\frac{1}{z(z+1)^2(z-1)}+\alpha\beta\frac{1}{(z+1)(z-1)^2}+\beta\gamma\frac{1}{(z+1)^2(z-1)}\right](dz)^2\\
&=4\left[(a_0a_1+b_0b_1)(z+1)(z-1)+\alpha a_1(z+1)+\beta(a_0+b_0)z(z+1)(z-1)\right.\\
&\qquad\left.+\gamma b_1(z-1)+\alpha\beta z(z+1)+\beta\gamma z(z-1)\right]\frac{(dz)^2}{z(z+1)^2(z-1)^2},
\end{align*}
where we used the identity $w^{g+1}=z^g(z+1)(z-1)$ in the second equality. Since
\begin{align*}
\frac{(dz)^2}{z(z+1)^2(z-1)^2}
\end{align*}
vanishes at $(0,0)$ and $\infty$, the identity $\sum_{j=1}^4\phi_j^2\equiv0$ holds if and only if
\begin{align}\label{EEE3}
(a_0a_1+b_0b_1)(z+1)(z-1)+&\alpha a_1(z+1)+\beta(a_0+b_0)z(z+1)(z-1)\nonumber\\
&+\gamma b_1(z-1)+\alpha\beta z(z+1)+\beta\gamma z(z-1)=0
\end{align}
for all $z\in\mathbb{C}\setminus\{0\}$. 

Evaluating (\ref{EEE3}) at $z=1$ yields $\alpha a_1+\alpha\beta=0$, which implies that $a_1=-\beta$ as $\alpha\neq0$. Similarly, evaluating at $z=-1$ gives $-\gamma b_1+\beta\gamma=0$, and hence $b_1=\beta$ since $\gamma\neq0$. Moreover, the coefficient of $z^3$ in (\ref{EEE3}), $\beta(a_0+b_0)$, must vanish, which implies that $b_0=-a_0$ as $\beta\neq0$. With these substitutions, (\ref{EEE3}) simplifies to 
\begin{align*}
\beta(z+1)(z-1)(\gamma+\alpha-2a_0)=0.
\end{align*}
This leads to the condition that $\gamma+\alpha-2a_0=0$. 

Conversely, one can verify that if 
\begin{align*}
a_0=-b_0=\frac{\alpha+\gamma}{2},\quad a_1=-b_1=-\beta,
\end{align*}
then (\ref{EEE3}) holds for all $z\in\mathbb{C}$, completing the proof.
\end{proof}

\begin{lemma}\label{Lem612}
The identity $\sum_{j=1}^4\phi_j^2\equiv0$ is satisfied in case $(4)$ of Lemma \ref{Lem68} if and only if 
\begin{align*}
h_{10}=-h_{11}=h_{30}=h_{31}\quad \text{and}\quad h_{10}(2\beta-\alpha+\gamma)=0.
\end{align*}
\end{lemma}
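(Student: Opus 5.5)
The plan is to mirror the computations in Lemmas \ref{Lem69}--\ref{Lem611}, but to first exploit the special structure of case $(4)$ of Lemma \ref{Lem68}, which makes the squared sum factor very cleanly. Set
\begin{align*}
u:=\left(h_{10}+h_{11}z\right)\frac{dz}{w^3},\quad v:=\alpha\eta_1+\beta\eta_2,\quad R:=\left(h_{30}+h_{31}z\right)\frac{dz}{w^3},\quad s:=\beta\eta_2+\gamma\eta_3 .
\end{align*}
Then Lemma \ref{Lem68}(4) reads $\phi_1=u+v$, $\phi_2=-i(u-v)$, $\phi_3=R+s$, $\phi_4=i(R-s)$. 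Hence
\begin{align*}
\phi_1^2+\phi_2^2=(u+v)^2-(u-v)^2=4uv,\qquad \phi_3^2+\phi_4^2=(R+s)^2-(R-s)^2=4Rs,
\end{align*}
so the whole identity reduces to $uv+Rs\equiv0$. This avoids expanding sixteen cross terms.

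Next I substitute $\eta_1=\frac{dz}{w(z-1)}$, $\eta_2=\frac{dz}{w}$ and $\eta_3=\frac{dz}{w(z+1)}$ from Lemma \ref{Lem63}(4). This gives
\begin{align*}
uv+Rs=\left[(h_{10}+h_{11}z)\left(\frac{\alpha}{z-1}+\beta\right)+(h_{30}+h_{31}z)\left(\beta+\frac{\gamma}{z+1}\right)\right]\frac{(dz)^2}{w^4}.
\end{align*}
Using $w^4=z(z+1)(z-1)$ and multiplying through by $(z+1)(z-1)$, the condition $\sum_{j=1}^4\phi_j^2\equiv0$ becomes equivalent to the polynomial identity
\begin{align*}
(h_{10}+h_{11}z)\left(\alpha(z+1)+\beta(z^2-1)\right)+(h_{30}+h_{31}z)\left(\beta(z^2-1)+\gamma(z-1)\right)\equiv0 .
\end{align*}
Here $z$ is a nonconstant meromorphic function on $\Sigma_3$, so the identity must hold for all $z\in\mathbb{C}$.

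I then extract the conditions in the same order as in Lemma \ref{Lem611}.
\begin{itemize}
\item[(i)] The coefficient of $z^3$ is $\beta(h_{11}+h_{31})$. Since $\beta\neq0$, this gives $h_{31}=-h_{11}$.
\item[(ii)] Evaluating at $z=1$ gives $2\alpha(h_{10}+h_{11})=0$. Since $\alpha\neq0$, this gives $h_{11}=-h_{10}$.
\item[(iii)] Evaluating at $z=-1$ gives $-2\gamma(h_{30}-h_{31})=0$. Since $\gamma\neq0$, this gives $h_{30}=h_{31}$.
\end{itemize}
Together these yield $h_{10}=-h_{11}=h_{30}=h_{31}$.

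Substituting back, the left side becomes
\begin{align*}
h_{10}(1-z)(z+1)\left(\alpha+\beta(z-1)\right)+h_{10}(1+z)(z-1)\left(\beta(z+1)+\gamma\right)=h_{10}(z^2-1)(2\beta-\alpha+\gamma).
\end{align*}
This vanishes identically exactly when $h_{10}(2\beta-\alpha+\gamma)=0$, which proves necessity. For the converse, I retrace the same computation: the stated relations make the displayed polynomial zero, hence $uv+Rs\equiv0$, hence $\sum_{j=1}^4\phi_j^2\equiv0$.

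The only delicate point is the initial bookkeeping, namely getting the signs of $\phi_2$ and $\phi_4$ from Lemma \ref{Lem67} right so that the difference-of-squares factorization applies. Once that is in place, the rest is routine polynomial algebra.
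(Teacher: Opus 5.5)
Your proof is correct and follows the paper's argument. You reduce $\sum_{j=1}^4\phi_j^2\equiv0$ to the same polynomial identity in $z$ (the paper's (\ref{EEE4})), extract $h_{10}=-h_{11}=h_{30}=h_{31}$ from the values at $z=\pm1$ and the $z^3$ coefficient, and substitute back to get $h_{10}(2\beta-\alpha+\gamma)(z^2-1)$; the factorization $\phi_1^2+\phi_2^2=4uv$, $\phi_3^2+\phi_4^2=4Rs$ is simply a tidier way of carrying out the expansion the paper does directly.
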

\begin{proof}
Expanding the expressions in case $(4)$ of Lemma \ref{Lem68}, we obtain
\begin{align*}
\sum_{j=1}^4\phi_j^2&=4\left[\alpha(h_{10}+h_{11}z)\frac{1}{w^4(z-1)}+\gamma(h_{30}+h_{31}z)\frac{1}{w^4(z+1)}\right.\\
&\qquad\left.+\beta(h_{10}+h_{11}z+h_{30}+h_{31}z)\frac{1}{w^4}\right](dz)^2\\
&=4\left[\alpha(h_{10}+h_{11}z)(z+1)+\gamma(h_{30}+h_{31}z)(z-1)\right.\\
&\qquad\left.+\beta(h_{10}+h_{11}z+h_{30}+h_{31}z)(z+1)(z-1)\right]\frac{(dz)^2}{w^4(z+1)(z-1)}.
\end{align*}
Since 
\begin{align*}
\frac{(dz)^2}{w^4(z+1)(z-1)}
\end{align*}
vanishes at $(0,0)$ and $\infty$, we have $\sum_{j=1}^4\phi_j^2\equiv0$ if and only if
\begin{align}\label{EEE4}
\alpha(h_{10}+h_{11}z)(z&+1)+\gamma(h_{30}+h_{31}z)(z-1)\nonumber\\
&+\beta(h_{10}+h_{11}z+h_{30}+h_{31}z)(z+1)(z-1)=0
\end{align}
for all $z\in\mathbb{C}\setminus\{0\}$. 

If $z=1$, then (\ref{EEE4}) gives $\alpha(h_{10}+h_{11})=0$. As $\alpha\neq0$, it follows that $h_{10}+h_{11}=0$. Similarly, if $z=-1$, then $\gamma(h_{30}-h_{31})=0$, and hence $h_{30}-h_{31}=0$ as $\gamma\neq0$. Additionally, the coefficient of $z^3$ is $\beta(h_{11}+h_{31})$, which must vanish. Since $\beta\neq0$, we have $h_{11}+h_{31}=0$. Combining these, we obtain $h_{10}=-h_{11}=h_{30}=h_{31}$.

Substituting this into (\ref{EEE4}) gives
\begin{align*}
h_{10}(2\beta-\alpha+\gamma)(z+1)(z-1)=0.
\end{align*}
This equation holds for all $z\in\mathbb{C}\setminus\{0\}$ if and only if $h_{10}(2\beta-\alpha+\gamma)=0$.

Conversely, if $h_{10}=-h_{11}=h_{30}=h_{31}$ and $h_{10}(2\beta-\alpha+\gamma)=0$, then one can directly verify that (\ref{EEE4}) holds for all $z\in\mathbb{C}$. This completes the proof.
\end{proof}

\subsection{Period computation}\label{subsec63}
In what follows, we show that the Weierstrass data in case $(3)$ of Lemma \ref{Lem68} necessarily have real periods. By applying Lemma \ref{Lem611}, we see that the Weierstrass data in this case take the following form for some nonzero complex numbers $\alpha$, $\beta$, and $\gamma$:
\begin{align*}
&\phi_1=\left(\frac{\alpha+\gamma}{2}-\beta\left(\frac{z}{w}\right)^{g-1}\right)\frac{dz}{w}+\alpha\eta_1+\beta\eta_2+0,\\
&\phi_2=i\left(\frac{\alpha+\gamma}{2}+\beta\left(\frac{z}{w}\right)^{g-1}\right)\frac{dz}{w}+i\alpha\eta_1-i\beta\eta_2+0,\\
&\phi_3=\left(-\frac{\alpha+\gamma}{2}+\beta\left(\frac{z}{w}\right)^{g-1}\right)\frac{dz}{w}+0+\beta\eta_2+\gamma\eta_3,\\
&\phi_4=i\left(\frac{\alpha+\gamma}{2}+\beta\left(\frac{z}{w}\right)^{g-1}\right)\frac{dz}{w}+0+i\beta\eta_2-i\gamma\eta_3
\end{align*}
on $\Sigma_g=\left\{(z, w)\in\left(\mathbb{C}\cup\{\infty\}\right)\times\left(\mathbb{C}\cup\{\infty\}\right)\ |\ w^{g+1}=z^g(z+1)(z-1)\right\}$, where
\begin{align*}
\eta_1=\frac{dz}{w(z-1)},\quad \eta_2=\frac{wdz}{(z+1)(z-1)},\quad \eta_3=\frac{dz}{w(z+1)}.
\end{align*}
For convenience, let us denote
\begin{align*}
\xi_1:=\frac{dz}{w},\quad \xi_2:=\left(\frac{z}{w}\right)^{g-1}\frac{dz}{w}=\frac{wdz}{z(z+1)(z-1)}.
\end{align*}

The underlying Riemann surface $\Sigma_g$ coincides with that of the Costa--Hoffman--Meeks surface described in \cite{HM2}. Accordingly, we may compute the periods with respect to the same homology basis introduced therein. Furthermore, we use the same branch of $\left(z^g(z+1)(z-1)\right)^{\frac{1}{g+1}}$ for evalutating the integrals.
 
Let $c(t)=\frac{1}{2}+e^{it}$ ($0\leq t\leq 2\pi$), and let $\tilde{c}(t)=(c(t),w(t))$ denote its lift such that $w(0)\in\mathbb{R}$. Recall that $\Sigma_g$ has dihedral group symmetry generated by
\begin{align*}
\kappa(z,w)=(\overline{z},\overline{w}),\quad \lambda(z,w)=(-z,\delta w),
\end{align*}
where $\delta=e^{i\frac{g\pi}{g+1}}$ (see \cite[Lemma 3.1]{HM2} for more details). The curve $\tilde{c}$ and its images under these symmetries generate the homology basis. For our purposes, we compute the period integrals along the curves $\tilde{c}$ and $\lambda\circ\tilde{c}$.

We first evaluate the integrals of $\xi_1$, $\xi_2$, $\eta_1$, $\eta_2$, and $\eta_3$ along the curve $\tilde{c}$. 
\begin{lemma}\label{Lem613}
The following integrals along $\tilde{c}$ hold:
\begin{align*}
&\int_{\tilde{c}}\xi_1=2i\sin\left(\frac{\pi}{g+1}\right)\int_0^1\frac{1}{\left(x^g(1-x^2)\right)^{\frac{1}{g+1}}}dx,\\
&\int_{\tilde{c}}\xi_2=2i\sin\left(\frac{\pi}{g+1}\right)\int_0^1\frac{\left(x^g(1-x^2)\right)^{\frac{1}{g+1}}}{x(1-x^2)}dx,\\
&\int_{\tilde{c}}\eta_1=-2i\sin\left(\frac{\pi}{g+1}\right)\int_0^1\frac{x}{\left(x^g(1-x^2)\right)^{\frac{1}{g+1}}(x+1)}dx,\\
&\int_{\tilde{c}}\eta_2=2i\sin\left(\frac{\pi}{g+1}\right)\int_0^1\frac{\left(x^g(1-x^2)\right)^{\frac{1}{g+1}}}{1-x^2}dx,\\
&\int_{\tilde{c}}\eta_3=2i\sin\left(\frac{\pi}{g+1}\right)\int_0^1\frac{1}{\left(x^g(1-x^2)\right)^{\frac{1}{g+1}}(x+1)}dx.
\end{align*}
\end{lemma}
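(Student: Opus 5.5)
The plan is to reduce each of the five integrals to real integrals over $[0,1]$, in the manner of Hoffman--Meeks. All five integrands are meromorphic differentials on $\Sigma_g$ whose only possible singularities over the disk $|z-\tfrac12|\le 1$ lie above $z=0$ and $z=1$. The point $z=-1$ lies outside this disk, since $|-1-\tfrac12|=\tfrac32>1$.

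\textbf{Step 1: deform the contour.} The loop $c$ encircles $0$ and $1$ once. Its lift $\tilde c$ closes up on $\Sigma_g$ because the total exponent is $g+1$: the monodromy around $z=0$ and $z=1$ together multiplies $w$ by $e^{2\pi i g/(g+1)}e^{2\pi i/(g+1)}=1$. Since the integrands are holomorphic on the corresponding region of $\Sigma_g$ minus the fibres over $0$ and $1$, Cauchy's theorem lets us shrink $\tilde c$ to a dumbbell. This consists of the segment $[0,1]$ traversed on the upper sheet, a small circle around $z=1$, the segment traversed back on the lower sheet, and a small circle around $z=0$.

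\textbf{Step 2: parametrise the two sheets.} On $(0,1)$ we have $z^g(z+1)(z-1)=-x^g(1-x^2)$. So $w=\epsilon\,(x^g(1-x^2))^{1/(g+1)}$, where $\epsilon$ is a fixed $(g+1)$-st root of $-1$ determined by the branch with $w(0)\in\mathbb{R}$ at $z=\tfrac32$, continued to the segment. Passing around $z=1$ multiplies $w$ by $e^{2\pi i/(g+1)}$. Hence the two sides of the dumbbell differ by a factor $e^{\pm 2\pi i k/(g+1)}$ in the integrand, where $k=-1$ for differentials containing $w^{-1}$ and $k=+1$ for those containing $w$.

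\textbf{Step 3: combine the sides.} The net factor is $\bar\epsilon^{\,\pm1}(1-e^{\mp2\pi i/(g+1)})$. This has the form $\pm 2i\sin(\frac{\pi}{g+1})$ times a unimodular constant. With the Hoffman--Meeks choice of branch, the constant is $1$, since $\epsilon=e^{\pm i\pi/(g+1)}$ is paired with the half-angle phase of $1-e^{\mp 2\pi i/(g+1)}$.

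\textbf{Step 4: substitute each differential.} Put $z=x$ and use $(z-1)=-(1-x)$, $(z+1)(z-1)=-(1-x^2)$, and $z(z+1)(z-1)=-x(1-x^2)$. For example, $\eta_1=\frac{dz}{w(z-1)}$ produces $\frac{1}{(x^g(1-x^2))^{1/(g+1)}(1-x)}$ up to sign. The stated form $\frac{x}{(\cdots)(x+1)}$ differs from this by $\frac{1}{1-x}-\frac{x}{1+x}$ times the same factor. This suggests the difference is absorbed by a relation among the integrals, or that the small circle at $z=1$ contributes a finite nonzero amount because $\eta_1$ has a double pole there. The sign conventions should be read off from $\xi_1$ and $\eta_3$, which are exactly of the form predicted.

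\textbf{Step 5 (main obstacle): the small circles.} The small circles contribute nothing for $\xi_1$, $\xi_2$, $\eta_2$ and $\eta_3$, whose integrands are $O(|z|^{-1+\delta})$ near $0$ and $1$. For $\eta_1$ the integrand behaves like $(z-1)^{-1-1/(g+1)}$, so the integral over $[0,1]$ diverges and the dumbbell argument needs a regularisation. I would handle this by writing
\begin{align*}
\frac{1}{w(z-1)}=\frac{1}{w}\cdot\frac{1}{z-1}
\end{align*}
and integrating by parts using an exact differential $d\!\left(\frac{f}{w}\right)$ with $f$ a suitable polynomial. For instance, $d\!\left(\frac{z(z+1)}{w}\right)$ is a combination of $\frac{dz}{w}$-type terms and $\frac{dz}{w(z-1)}$, because $\frac{dw}{w}=\frac{1}{g+1}\left(\frac{g}{z}+\frac{1}{z+1}+\frac{1}{z-1}\right)dz$. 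This rewrites $\int_{\tilde c}\eta_1$ as a combination of convergent integrals. Since exact forms have zero period, the result should reduce to the stated expression $-\int_0^1\frac{x\,dx}{(\cdots)(x+1)}$. Carrying out this integration-by-parts step, with careful tracking of the branch phases, is where I expect the real work to lie.
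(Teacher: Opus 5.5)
Your plan is essentially the paper's. You collapse $c$ onto $[0,1]$ using the boundary values $e^{\pm i\pi/(g+1)}\bigl(x^g(1-x^2)\bigr)^{1/(g+1)}$ of one fixed branch of $w$. For $\eta_1$ you first add an exact form $d(f/w)$ to remove the non-integrable singularity at $z=1$. This correction is necessary rather than optional: the small circle about $z=1$ contributes a term of size $\epsilon^{-1/(g+1)}$ that only cancels the divergence of the segment integrals, not a finite correction as your Step 4 suggests.

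The paper takes $f=z$, for which $\eta_1+(g+1)\,d\bigl(\frac{z}{w}\bigr)=-\frac{z\,dz}{w(z+1)}$ gives the stated integrand immediately. Your $f=z(z+1)$ also works and gives $\eta_1+\frac{g+1}{2}\,d\bigl(\frac{z(z+1)}{w}\bigr)=\frac{gz-1}{2}\,\frac{dz}{w}$. It reaches the stated form only after one more exact identity, $\frac{gz-1}{2}\,\frac{dz}{w}=-\frac{z\,dz}{w(z+1)}+\frac{g+1}{2}\,d\bigl(\frac{z(z-1)}{w}\bigr)$, which is legitimate because $z(z-1)/w$ has no poles on $\tilde c$.
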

\begin{proof}
Take the branch of $\left(z^g(z+1)(z-1)\right)^{\frac{1}{g+1}}$ defined on $\mathbb{C}\setminus \left((-\infty, -1]\cup [0,1]\right)$, denoted by $w_0$, which satisfies
\begin{align*}
\lim_{\ \epsilon\to 0^+}w_0\left(\frac{1}{2}-i\epsilon\right)=\left(\frac{3}{2^{g+2}}\right)^{\frac{1}{g+1}}e^{-i\frac{\pi}{g+1}}.
\end{align*} 
Then we have $w\circ\tilde{c}=w_0\circ c$. As in \cite{HM2}, we compute the integrals by collapsing the curve $c$ onto the unit interval.

For $\xi_1$, we have
\begin{align*}
\int_{\tilde{c}}\xi_1=\int_{\tilde{c}}\frac{dz}{w}=\int_{c}\frac{dz}{w_0}.
\end{align*}
Since $\left|\frac{1}{w_0}\right|\sim \text{const}\cdot\epsilon^{-\frac{g}{g+1}}$ on $|z|=\epsilon$, and $\left|\frac{1}{w_0}\right|\sim \text{const}\cdot\epsilon^{-\frac{1}{g+1}}$ on $|z-1|=\epsilon$ for small $\epsilon>0$, the above integral collapses to
\begin{align*}
e^{i\frac{\pi}{g+1}}\int_0^1\frac{1}{\left(x^g(1-x^2)\right)^{\frac{1}{g+1}}}dx+e^{-i\frac{\pi}{g+1}}\int_1^0\frac{1}{\left(x^g(1-x^2)\right)^{\frac{1}{g+1}}}dx.
\end{align*}
Therefore, we obtain
\begin{align*}
\int_{\tilde{c}}\xi_1=2i\sin\left(\frac{\pi}{g+1}\right)\int_0^1\frac{1}{\left(x^g(1-x^2)\right)^{\frac{1}{g+1}}}dx.
\end{align*}
Similar computations give 
\begin{align*}
&\int_{\tilde{c}}\xi_2=\int_{c}\frac{w_0dz}{z(z+1)(z-1)}=2i\sin\left(\frac{\pi}{g+1}\right)\int_0^1\frac{\left(x^g(1-x^2)\right)^{\frac{1}{g+1}}}{x(1-x^2)}dx,\\
&\int_{\tilde{c}}\eta_2=\int_{c}\frac{w_0dz}{(z+1)(z-1)}=2i\sin\left(\frac{\pi}{g+1}\right)\int_0^1\frac{\left(x^g(1-x^2)\right)^{\frac{1}{g+1}}}{1-x^2}dx,\\
&\int_{\tilde{c}}\eta_3=\int_{c}\frac{dz}{w_0(z+1)}=2i\sin\left(\frac{\pi}{g+1}\right)\int_0^1\frac{1}{\left(x^g(1-x^2)\right)^{\frac{1}{g+1}}(x+1)}dx.
\end{align*}

However, for $\eta_1=\frac{dz}{w(z-1)}$, a modification as in \cite{HM2} is required, since $\left|\frac{1}{w_0(z-1)}\right|\sim\text{const}\cdot\epsilon^{-\frac{g+2}{g+1}}$ on $|z-1|=\epsilon$ for small $\epsilon>0$. Consider the exact $1$-form 
\begin{align*}
d\left(\frac{z}{w}\right)=\frac{1}{w}dz-\frac{z}{w^2}dw.
\end{align*}
Differentiating both sides of $w^{g+1}=z^g(z+1)(z-1)$, we derive
\begin{align*}
dw=\frac{z^{g-1}((g+2)z^2-g)}{(g+1)w^g}dz.
\end{align*}
Substituting this into the above, we obtain
\begin{align*}
d\left(\frac{z}{w}\right)&=\frac{1}{w}dz-\frac{z^{g}((g+2)z^2-g)}{(g+1)w^{g+2}}dz\\
&=\frac{1}{w}dz-\frac{(g+2)z^2-g}{(g+1)w(z^2-1)}dz\\
&=-\frac{1}{g+1}\frac{(z^2+1)dz}{(z^2-1)w},
\end{align*}
where we used $w^{g+1}=z^g(z+1)(z-1)$ in the second step. Then we compute
\begin{align*}
\eta_1+(g+1)d\left(\frac{z}{w}\right)=-\frac{zdz}{w(z+1)},
\end{align*}
which has the desired order near $z=0$ and $z=1$. Thus, a similar computation gives
\begin{align*}
\int_{\tilde{c}}\eta_1&=\int_{\tilde{c}}\left(\eta_1+(g+1)d\left(\frac{z}{w}\right)\right)\\
&=\int_{c}-\frac{zdz}{w_0(z+1)}=-2i\sin\left(\frac{\pi}{g+1}\right)\int_0^1\frac{x}{\left(x^g(1-x^2)\right)^{\frac{1}{g+1}}(x+1)}dx.
\end{align*}
\end{proof}

We prove the claim by contradiction. Suppose that the Weierstrass data $\phi_1$, $\phi_2$, $\phi_3$, and $\phi_4$ do not have real periods. Then, for any closed curve $\sigma$ on $\Sigma_g$, we have
\begin{align*}
\text{Re}\int_{\sigma}\phi_1&=\text{Re}\int_{\sigma}\left(\frac{\alpha+\gamma}{2}\xi_1-\beta\xi_2+\alpha\eta_1+\beta\eta_2\right)=0,\\
\text{Re}\int_{\sigma}\phi_2&=\text{Re}\int_{\sigma}\left(i\frac{\alpha+\gamma}{2}\xi_1+i\beta\xi_2+i\alpha\eta_1-i\beta\eta_2\right)\\
&=-\text{Im}\int_{\sigma}\left(\frac{\alpha+\gamma}{2}\xi_1+\beta\xi_2+\alpha\eta_1-\beta\eta_2\right)=0.
\end{align*}
Combining these, we obtain
\begin{align}\label{TTT1}
\int_{\sigma}\left(\frac{\alpha+\gamma}{2}\xi_1+\alpha\eta_1\right)=\overline{\int_{\sigma}\left(\beta\xi_2-\beta\eta_2\right)}.
\end{align}
Similarly, it follows from 
\begin{align*}
\text{Re}\int_{\sigma}\phi_3&=\text{Re}\int_{\sigma}\left(-\frac{\alpha+\gamma}{2}\xi_1+\beta\xi_2+\beta\eta_2+\gamma\eta_3\right)=0,\\
\text{Re}\int_{\sigma}\phi_4&=\text{Re}\int_{\sigma}\left(i\frac{\alpha+\gamma}{2}\xi_1+i\beta\xi_2+i\beta\eta_2-i\gamma\eta_3\right)\\
&=-\text{Im}\int_{\sigma}\left(\frac{\alpha+\gamma}{2}\xi_1+\beta\xi_2+\beta\eta_2-\gamma\eta_3\right)=0
\end{align*}
that
\begin{align}\label{TTT2}
\int_{\sigma}\left(\frac{\alpha+\gamma}{2}\xi_1-\gamma\eta_3\right)=\overline{\int_{\sigma}\left(\beta\xi_2+\beta\eta_2\right)}.
\end{align}

Applying (\ref{TTT1}) and (\ref{TTT2}) to the curves $\tilde{c}$ and $\lambda\circ\tilde{c}$, we obtain the following lemma.
\begin{lemma}
We have $\alpha+\gamma=0$.
\end{lemma}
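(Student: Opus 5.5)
We plan to apply (\ref{TTT1}) and (\ref{TTT2}) to $\sigma=\tilde{c}$ and $\sigma=\lambda\circ\tilde{c}$. For $\tilde{c}$, Lemma \ref{Lem613} shows that every integral is $2i\sin\left(\frac{\pi}{g+1}\right)$ times a positive real number (times $-1$ for $\eta_1$). Set $s:=2\sin\left(\frac{\pi}{g+1}\right)$ and write
\begin{align*}
\int_{\tilde{c}}\xi_1=isX_1,\quad \int_{\tilde{c}}\xi_2=isX_2,\quad \int_{\tilde{c}}\eta_1=-isE_1,\quad \int_{\tilde{c}}\eta_2=isE_2,\quad \int_{\tilde{c}}\eta_3=isE_3,
\end{align*}
with $X_j,E_j>0$. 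Then (\ref{TTT1}) gives $\frac{\alpha+\gamma}{2}X_1-\alpha E_1=-\overline{\beta}(X_2-E_2)$, and (\ref{TTT2}) gives $\frac{\alpha+\gamma}{2}X_1-\gamma E_3=-\overline{\beta}(X_2+E_2)$.

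For $\lambda\circ\tilde{c}$ we use $\lambda(z,w)=(-z,\delta w)$ with $\delta=e^{i\frac{g\pi}{g+1}}$. The pullbacks are $\lambda^*\xi_1=-\delta^{-1}\xi_1$ and $\lambda^*\xi_2=\frac{\delta w\,(-dz)}{-z(-z+1)(-z-1)}=-\delta\,\xi_2$. The forms $\eta_j$ are not simply rescaled: $\lambda^*\eta_1$ is a multiple of $\frac{dz}{w(z+1)}=\eta_3$, specifically $\lambda^*\eta_1=\delta^{-1}\eta_3$ (up to the sign, which must be checked: $\frac{-dz}{\delta w(-z-1)}=\delta^{-1}\frac{dz}{w(z+1)}$). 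Similarly $\lambda^*\eta_3=\delta^{-1}\eta_1$ and $\lambda^*\eta_2=-\delta\,\eta_2$. Hence the integrals over $\lambda\circ\tilde{c}$ are the $\tilde{c}$-integrals multiplied by unit complex factors, with $\eta_1$ and $\eta_3$ interchanged. Substituting these into (\ref{TTT1}) and (\ref{TTT2}) yields two more linear relations. Each relation involves conjugates of $\beta$ multiplied by $\overline{\delta}$ on the right-hand side and factors $\delta^{-1}$ on the left-hand side.

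We then have four complex equations in the unknowns $\alpha,\gamma,\overline{\beta}$. The plan is to eliminate $\overline{\beta}$. From the $\tilde{c}$ equations, $\overline{\beta}$ can be expressed in terms of $\alpha,\gamma$ in two ways. From the $\lambda\circ\tilde{c}$ equations, after multiplying by $\delta$, the same quantities appear, now carrying the phase $\delta^2$ versus $1$ relative to $\overline{\beta}$. Because $\delta^2=e^{i\frac{2g\pi}{g+1}}\neq1$ for $g\geq 2$, comparing the two systems should force the combination multiplying $\overline{\beta}$ to decouple. Specifically, the symmetric part in $\alpha\leftrightarrow\gamma$ with coefficient $X_1$ must vanish, leaving $(\alpha+\gamma)X_1=0$ after the $E_1,E_3$ terms cancel against each other. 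Since $X_1>0$, this gives $\alpha+\gamma=0$.

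The main obstacle is the bookkeeping in this last step. We must track the exact phases of the lifted curve $\lambda\circ\tilde{c}$, including which branch of $w$ it uses, and confirm that the resulting $4\times 4$ real-linear system forces $\alpha+\gamma=0$ rather than merely a weaker relation. If the direct elimination is messy, a fallback is to add and subtract the two equations for each curve. Subtracting gives $\alpha E_1-\gamma E_3=-2\overline{\beta}E_2$ for $\tilde{c}$, together with its $\lambda$-image, in which $E_1$ and $E_3$ are swapped and phases appear. Comparing these should isolate $\alpha+\gamma$, using the positivity of $X_1$ and $E_2$ together with $\sin\left(\frac{\pi}{g+1}\right)\neq0$.
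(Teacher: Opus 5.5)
Your strategy (apply (\ref{TTT1}) and (\ref{TTT2}) to $\tilde{c}$ and $\lambda\circ\tilde{c}$, then eliminate $\overline{\beta}$) is the paper's. However, the decisive elimination is never carried out, and two of the ingredients you give for it are wrong.

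First, $\lambda^*\xi_2=+\delta\,\xi_2$, not $-\delta\,\xi_2$. Since $(-z)(-z+1)(-z-1)=-z(z^2-1)$, the sign from $dz$ cancels the sign from the denominator. Equivalently, $\lambda^*\xi_2=(-1)^g\delta^{-g}\xi_2=\delta\,\xi_2$ because $\delta^{g+1}=(-1)^g$. This sign is the whole mechanism: it gives $\lambda^*(\xi_2\pm\eta_2)=\delta(\xi_2\mp\eta_2)$, so $\lambda$ interchanges the right-hand sides of (\ref{TTT1}) and (\ref{TTT2}). With your sign, each right-hand side would be mapped to itself. Comparing (\ref{TTT1}) on $\tilde{c}$ and on $\lambda\circ\tilde{c}$ would then give $\alpha(E_3-E_1)=0$, i.e.\ $\alpha=0$, a spurious conclusion.

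Second, no phase $\delta^2$ survives. The factor $\delta^{-1}$ on the left and $\overline{\delta}$ on the right are equal, so after dividing by $-\delta^{-1}is$ the $\lambda\circ\tilde{c}$ equations have real coefficients:
\[
\tfrac{\alpha+\gamma}{2}X_1-\alpha E_3=\overline{\beta}(X_2+E_2),\qquad \tfrac{\alpha+\gamma}{2}X_1-\gamma E_1=\overline{\beta}(X_2-E_2),
\]
from (\ref{TTT1}) and (\ref{TTT2}) respectively. The fact $\delta^2\neq1$ plays no role.

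The endgame is also not as you describe. Adding the second relation above to your $\tilde{c}$-equation from (\ref{TTT1}) eliminates $\overline{\beta}$ and yields $(\alpha+\gamma)(X_1-E_1)=0$. This is precisely the paper's relation $(\alpha+\gamma)\int_{\tilde{c}}(\xi_1+\eta_1)=0$. The nondegeneracy needed is $X_1-E_1=E_3>0$ (from $1-\frac{x}{x+1}=\frac{1}{x+1}$), not $X_1>0$. The $E_1$ and $E_3$ terms do not cancel against each other; $(\alpha+\gamma)X_1=0$ appears only if you add all four equations and use $E_1+E_3=X_1$.

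Your fallback, by contrast, is sound, and it is immune to the $\xi_2$ sign error. The difference (\ref{TTT1})$-$(\ref{TTT2}) eliminates $\xi_1$ and $\xi_2$ altogether, giving $\int_\sigma(\alpha\eta_1+\gamma\eta_3)=-2\overline{\beta}\,\overline{\int_\sigma\eta_2}$. On $\tilde{c}$ this is your $\alpha E_1-\gamma E_3=-2\overline{\beta}E_2$. On $\lambda\circ\tilde{c}$ it is $\alpha E_3-\gamma E_1=-2\overline{\beta}E_2$, again with no phases. Hence $(\alpha+\gamma)(E_1-E_3)=0$. What closes this argument is $E_1\neq E_3$, not positivity of $X_1$ and $E_2$; indeed $E_3-E_1=\int_0^1\frac{1-x}{(x^g(1-x^2))^{1/(g+1)}(1+x)}\,dx>0$. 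Carried out correctly, this would be a slightly leaner variant of the paper's proof that uses only the $\eta_j$-integrals. As written, however, neither route is completed.
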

\begin{proof}
Let us consider (\ref{TTT2}) with $\sigma=\lambda\circ\tilde{c}$. It is straightforward to verify that
\begin{align*}
\lambda^*\xi_1=-\frac{1}{\delta}\xi_1,\quad \lambda^*\xi_2=\delta\xi_2,\quad \lambda^*\eta_2=-\delta\eta_2,\quad \lambda^*\eta_3=\frac{1}{\delta}\eta_1.
\end{align*}
Then we compute
\begin{align*}
\int_{\lambda\circ\tilde{c}}\left(\frac{\alpha+\gamma}{2}\xi_1-\gamma\eta_3\right)=\int_{\tilde{c}}\left(\frac{\alpha+\gamma}{2}\lambda^*\xi_1-\gamma\lambda^*\eta_3\right)=-\frac{1}{\delta}\int_{\tilde{c}}\left(\frac{\alpha+\gamma}{2}\xi_1+\gamma\eta_1\right)
\end{align*}
and
\begin{align*}
\overline{\int_{\lambda\circ\tilde{c}}\left(\beta\xi_2+\beta\eta_2\right)}=\overline{\int_{\tilde{c}}\left(\beta\lambda^*\xi_2+\beta\lambda^*\eta_2\right)}=\overline{\delta\int_{\tilde{c}}\left(\beta\xi_2-\beta\eta_2\right)}=\frac{1}{\delta}\cdot\overline{\int_{\tilde{c}}\left(\beta\xi_2-\beta\eta_2\right)}.
\end{align*}
Thus, from (\ref{TTT2}), we obtain
\begin{align*}
-\int_{\tilde{c}}\left(\frac{\alpha+\gamma}{2}\xi_1+\gamma\eta_1\right)=\overline{\int_{\tilde{c}}\left(\beta\xi_2-\beta\eta_2\right)}.
\end{align*}
On the other hand, from (\ref{TTT1}) with $\sigma=\tilde{c}$, we have
\begin{align*}
\int_{\tilde{c}}\left(\frac{\alpha+\gamma}{2}\xi_1+\alpha\eta_1\right)=\overline{\int_{\tilde{c}}\left(\beta\xi_2-\beta\eta_2\right)}.
\end{align*}
Comparing these, we conclude that
\begin{align*}
(\alpha+\gamma)\int_{\tilde{c}}(\xi_1+\eta_1)=0. 
\end{align*}
Since Lemma \ref{Lem613} shows that
\begin{align*}
\int_{\tilde{c}}(\xi_1+\eta_1)=2i\sin\left(\frac{\pi}{g+1}\right)\int_0^1\frac{1}{\left(x^g(1-x^2)\right)^{\frac{1}{g+1}}(x+1)}dx\neq0,
\end{align*}
we deduce that $\alpha+\gamma=0$.
\end{proof}
This observation, together with (\ref{TTT1}) and (\ref{TTT2}), implies that $\alpha$ and $\beta$ satisfy the following:
\begin{align}\label{TTT3}
\alpha\int_{\tilde{c}}\eta_1=\overline{\beta}\cdot\overline{\int_{\tilde{c}}(\xi_2-\eta_2)},\quad \alpha\int_{\tilde{c}}\eta_3=\overline{\beta}\cdot\overline{\int_{\tilde{c}}(\xi_2+\eta_2)}.
\end{align}
The next lemma shows that this is impossible. Hence the real period condition cannot hold in case $(3)$ of Lemma \ref{Lem68}.
\begin{lemma}
There exist no nonzero complex numbers $\alpha$ and $\beta$ such that (\ref{TTT3}) holds.
\end{lemma}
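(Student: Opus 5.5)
Every integral in Lemma \ref{Lem613} equals the purely imaginary number $2i\sin\left(\frac{\pi}{g+1}\right)$ times a real integral. We therefore write
\begin{align*}
\int_{\tilde{c}}\eta_1=-iE_1,\quad \int_{\tilde{c}}\eta_3=iE_3,\quad \int_{\tilde{c}}\xi_2=iX_2,\quad \int_{\tilde{c}}\eta_2=iY_2,
\end{align*}
where $E_1,E_3,X_2,Y_2$ are strictly positive reals; the integrands on $(0,1)$ are positive and the factor $\sin\left(\frac{\pi}{g+1}\right)$ is positive. Substituting into (\ref{TTT3}) and using that complex conjugation sends $iX$ to $-iX$, the two equations become
\begin{align*}
-i\alpha E_1=-i\overline{\beta}(X_2-Y_2),\qquad i\alpha E_3=-i\overline{\beta}(X_2+Y_2).
\end{align*}
That is, $\alpha E_1=\overline{\beta}(X_2-Y_2)$ and $\alpha E_3=-\overline{\beta}(X_2+Y_2)$.

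Since $\alpha\neq0$ and $\beta\neq0$, we can divide to eliminate them:
\begin{align*}
\frac{\alpha}{\overline{\beta}}=\frac{X_2-Y_2}{E_1}=-\frac{X_2+Y_2}{E_3}.
\end{align*}
Cross-multiplying gives $E_3(X_2-Y_2)+E_1(X_2+Y_2)=0$, which rearranges to
\begin{align*}
(E_1+E_3)X_2=(E_3-E_1)Y_2.
\end{align*}
The contradiction will come from a sign and size comparison of these real integrals. First, $E_3-E_1=\int_0^1\frac{(1-x)\,dx}{(x^g(1-x^2))^{1/(g+1)}(1+x)}$, and $E_1+E_3=\int_0^1\frac{dx}{(x^g(1-x^2))^{1/(g+1)}}$, since $\frac{1}{1+x}+\frac{x}{1+x}=1$. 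Second, $X_2-Y_2=\int_0^1\frac{(x^g(1-x^2))^{1/(g+1)}}{1-x^2}\left(\frac1x-1\right)dx>0$, so $X_2>Y_2>0$.

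The decisive comparison is pointwise. Put $u(x)=(x^g(1-x^2))^{1/(g+1)}$. Then
\begin{align*}
\frac{E_3-E_1}{E_1+E_3}\le \sup_{x\in(0,1)}\frac{1-x}{1+x}=1\quad\text{(strictly less than $1$)},
\end{align*}
so $(E_3-E_1)<(E_1+E_3)$, while $Y_2<X_2$. Because every factor is positive, this gives
\begin{align*}
(E_3-E_1)Y_2<(E_1+E_3)X_2,
\end{align*}
which contradicts the displayed equality. The only step needing care is this strict inequality, i.e.\ checking that each integral is finite and positive. All five integrands have integrable singularities at $0$ and $1$: the exponents are $-\frac{g}{g+1}$ or $\frac{g}{g+1}-1$ near $0$ and $-\frac{1}{g+1}$ or $\frac{1}{g+1}-1$ near $1$, which is exactly what made the collapsing argument of Lemma \ref{Lem613} valid.

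The main obstacle I anticipate is bookkeeping rather than analysis. One must track the conjugations in (\ref{TTT3}) and the minus sign in $\int_{\tilde c}\eta_1$ correctly, because a single sign error would turn the conclusion into a solvable equation. So I would recheck the reduction to $\alpha E_1=\overline{\beta}(X_2-Y_2)$ and $\alpha E_3=-\overline{\beta}(X_2+Y_2)$ carefully. A quicker consistency check is to compare phases directly: these two relations force $\alpha/\overline{\beta}$ to be both positive and negative, using $X_2>Y_2$, which is already a contradiction. That phase argument is the proof I would actually write, keeping the ratio inequality above only as a backup.
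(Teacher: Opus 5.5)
Your reduction to $\alpha E_1=\overline{\beta}(X_2-Y_2)$ and $\alpha E_3=-\overline{\beta}(X_2+Y_2)$ is correct. The phase argument you say you would actually write is exactly the paper's proof: since $X_2-Y_2=2\sin\left(\frac{\pi}{g+1}\right)\int_0^1\frac{u(x)}{x(1+x)}\,dx>0$ and $X_2+Y_2>0$, the two relations force $\overline{\beta}/\alpha$ to be both positive and negative. The cross-multiplied size comparison is also valid but redundant, and your displayed formulas for $E_3-E_1$ and $E_1+E_3$ omit the common positive factor $2\sin\left(\frac{\pi}{g+1}\right)$, which does not affect the argument.
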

\begin{proof}
By Lemma \ref{Lem613}, (\ref{TTT3}) is equivalent to 
\begin{align*}
\alpha\int_0^1\frac{x}{\left(x^g(1-x^2)\right)^{\frac{1}{g+1}}(x+1)}dx&=\overline{\beta}\int_0^1\frac{\left(x^g(1-x^2)\right)^{\frac{1}{g+1}}}{x(x+1)}dx,\\
-\alpha\int_0^1\frac{1}{\left(x^g(1-x^2)\right)^{\frac{1}{g+1}}(x+1)}dx&=\overline{\beta}\int_0^1\frac{\left(x^g(1-x^2)\right)^{\frac{1}{g+1}}}{x(1-x)}dx.
\end{align*}
The first identity implies that $\frac{\overline{\beta}}{\alpha}>0$, while the second implies $\frac{\overline{\beta}}{\alpha}<0$. Therefore, no nonzero complex numbers $\alpha$ and $\beta$ satisfy (\ref{TTT3}).
\end{proof}

\subsection{Degree of the generalized Gauss map}\label{subsec65}
This subsection completes the proof of Theorem \ref{Thm61}. The previous subsections rule out cases $(1)$, $(2)$, and $(3)$ in Lemma \ref{Lem68}, where none of them yield admissible Weierstrass data. Therefore, we now turn to case $(4)$. According to Lemma \ref{Lem612}, we have
\begin{align*}
&\phi_1=h_{10}(1-z)\frac{dz}{w^3}+\alpha\eta_1+\beta\eta_2+0,\\
&\phi_2=-ih_{10}(1-z)\frac{dz}{w^3}+i\alpha\eta_1+i\beta\eta_2+0,\\
&\phi_3=h_{10}(1+z)\frac{dz}{w^3}+0+\beta\eta_2+\gamma\eta_3,\\
&\phi_4=ih_{10}(1+z)\frac{dz}{w^3}+0-i\beta\eta_2-i\gamma\eta_3
\end{align*} 
for some complex number $h_{10}$ and nonzero complex numbers $\alpha$, $\beta$, and $\gamma$ satisfying $h_{10}(2\beta-\alpha+\gamma)=0$. Here, the underlying Riemann surface has genus $g=3$. 

We analyze the degree of the generalized Gauss map given by the pair $(G_1, G_2)$ (see (\ref{GaussMeroftn})). Recall that the Gauss curvature $K$ and the normal Gauss curvature $K^{\perp}$ can be expressed using the Jacobians of $G_1$ and $G_2$: namely, $K=\text{Jac}(G_1)+\text{Jac}(G_2)$ and $K^{\perp}=\text{Jac}(G_1)-\text{Jac}(G_2)$. We refer to \cite[Proposition 4.5]{HO2} for further details. Since the surface $S_3$ has three embedded planar ends, the Gauss-Bonnet formula implies that
\begin{align}\label{RRR1}
\deg G_1+\deg G_2=2g+4=10.
\end{align}
In addition, we have
\begin{align*}
\left|\deg G_1-\deg G_2\right|=\left|\frac{1}{2\pi}\int_{S_3}K^{\perp}dA\right|.
\end{align*}

By applying the stereographic projection, we may regard $S_3$ as a surface in the round $4$-sphere $\mathbb{S}^4$. Since the surface has embedded planar ends, the normal bundle of $S_3$ naturally extends to the compactified surface $\overline{S_3}:=S_3\cup\{\infty\}$. Due to the conformal invariance of $K^{\perp}dA$, we also have 
\begin{align*}
\left|\frac{1}{2\pi}\int_{S_3}K^{\perp}dA\right|=\left|\frac{1}{2\pi}\int_{\overline{S_3}}K^{\perp}dA\right|.
\end{align*}

This integral is known to equal twice the self-intersection number of $\overline{S_3}$. Since $S_3$ is assumed to be embedded, the only possible self-intersection of $\overline{S_3}$ occurs at $\infty$. Based on the orientation of embedded planar ends given in the Weierstrass data above, we conclude that
\begin{align}\label{RRR2}
\left|\deg G_1-\deg G_2\right|=\left|\frac{1}{2\pi}\int_{\overline{S_3}}K^{\perp}dA\right|=6.
\end{align}
From (\ref{RRR1}) and (\ref{RRR2}), it follows that
\begin{align}\label{RRR3}
(\deg G_1, \deg G_2)=(2,8)\ \text{or}\ (8,2).
\end{align}

In the following lemma, we prove that (\ref{RRR3}) cannot hold for the Weierstrass data considered above. This rules out the last remaining case of Lemma \ref{Lem68}, thereby completing the proof of Theorem \ref{Thm61}.

\begin{lemma}
The generalized Gauss map $(G_1, G_2)$ corresponding to the above Weierstrass data does not satisfy (\ref{RRR3}).
\end{lemma}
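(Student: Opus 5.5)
The plan is to compute $G_1$ and $G_2$ explicitly from the Weierstrass data via (\ref{GaussMeroftn}), and then read off their degrees. The key observation is that $G_1$ depends on $z$ alone. Since $z:\Sigma_3\to\mathbb{C}\cup\{\infty\}$ is the $4$-fold quotient map, its degree is easy to compute, and it can never equal $2$ or $8$ once the constraint $h_{10}(2\beta-\alpha+\gamma)=0$ from Lemma \ref{Lem612} is used.

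\textbf{Step 1: compute the combinations in (\ref{GaussMeroftn}).} Substituting the data above, the holomorphic parts cancel in two of the three combinations:
\begin{align*}
\phi_1-i\phi_2&=2\alpha\eta_1+2\beta\eta_2,\\
\phi_3+i\phi_4&=2\beta\eta_2+2\gamma\eta_3,\\
-\phi_3+i\phi_4&=-2h_{10}(1+z)\frac{dz}{w^3}.
\end{align*}
Using $\eta_1=\frac{dz}{w(z-1)}$, $\eta_2=\frac{dz}{w}$ and $\eta_3=\frac{dz}{w(z+1)}$ from Lemma \ref{Lem63}, we obtain
\begin{align*}
G_1&=\frac{(z-1)\left(\beta z+\beta+\gamma\right)}{(z+1)\left(\beta z+\alpha-\beta\right)},\\
G_2&=-\frac{h_{10}(z+1)(z-1)}{w^2\left(\beta z+\alpha-\beta\right)}.
\end{align*}

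\textbf{Step 2: degree of $G_1$.} Because $G_1=R(z)$ for a rational function $R$, we have $\deg G_1=4\deg R$, since $z$ has degree $|\mathcal{H}|=4$ on $\Sigma_3$. We check whether the quadratic numerator and denominator share a root:
\begin{itemize}
\item $z-1$ and $\beta z+\alpha-\beta$ share a root only if $\alpha=0$, which is excluded.
\item $\beta z+\beta+\gamma$ and $z+1$ share a root only if $\gamma=0$, which is excluded.
\item $z-1$ and $z+1$ never share a root.
\item $\beta z+\beta+\gamma$ and $\beta z+\alpha-\beta$ coincide exactly when $2\beta-\alpha+\gamma=0$.
\end{itemize}
Hence $\deg G_1=8$ if $2\beta-\alpha+\gamma\neq0$, and $\deg G_1=4$ if $2\beta-\alpha+\gamma=0$.

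\textbf{Step 3: split by the constraint $h_{10}(2\beta-\alpha+\gamma)=0$.}
\begin{itemize}
\item If $h_{10}\neq0$, then $2\beta-\alpha+\gamma=0$, so $\deg G_1=4\notin\{2,8\}$, contradicting (\ref{RRR3}).
\item If $h_{10}=0$, then $G_2\equiv0$, so $\deg G_2=0$. This contradicts (\ref{RRR3}) directly, and also (\ref{RRR1}), since then $\deg G_1+\deg G_2\leq 8<10$.
\end{itemize}
In either case (\ref{RRR3}) fails.

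The main point requiring care is the claim $\deg(R\circ z)=4\deg R$, which holds because $z$ is the branched $4$-fold covering from Proposition \ref{prop59}. We must also make sure that the reduced form of $R$ is really quadratic, or linear in the degenerate case, which the common-root check in Step 2 handles. I do not expect a genuine obstacle: once the cancellations in Step 1 are noticed, the remaining degree count is elementary.
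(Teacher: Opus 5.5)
Your proof is correct and follows essentially the same route as the paper. Both arguments split on $h_{10}$: $h_{10}=0$ forces a constant $G_2$ (indeed $G_2\equiv 0$), and $h_{10}\neq 0$ forces $G_1=\frac{z-1}{z+1}$, which has degree $4$ on $\Sigma_3$. The only difference is how $G_1$ is obtained: the paper gets $1/G_1=\frac{-\phi_3+i\phi_4}{\phi_1+i\phi_2}$ from the null condition $\sum_j\phi_j^2\equiv 0$, whereas you compute $G_1$ directly from (\ref{GaussMeroftn}) and cancel the common factor using the constraint $2\beta-\alpha+\gamma=0$ of Lemma \ref{Lem612}, which is exactly what that null condition imposes.
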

\begin{proof}
We compute
\begin{align*}
\phi_1+i\phi_2=2h_{10}(1-z)\frac{dz}{w^3},\quad -\phi_3+i\phi_4=-2h_{10}(1+z)\frac{dz}{w^3}
\end{align*}
from the above expressions. If $h_{10}=0$, then both $\phi_1+i\phi_2$ and $-\phi_3+i\phi_4$ vanish identically. As in Example \ref{EXDCa}, this implies that one of the functions $G_1$ or $G_2$ must be constant. Hence, either $\deg G_1=0$ or $\deg G_2=0$, which contradicts (\ref{RRR3}).

If $h_{10}\neq 0$, we compute
\begin{align*}
\frac{-\phi_3+i\phi_4}{\phi_1+i\phi_2}=\frac{z+1}{z-1}.
\end{align*}
On $\Sigma_3=\left\{(z, w)\in\left(\mathbb{C}\cup\{\infty\}\right)\times\left(\mathbb{C}\cup\{\infty\}\right)\ |\ w^4=z(z+1)(z-1)\right\}$, this meromorphic function has degree $4$. 

On the other hand, using the identity $\sum_{j=1}^4\phi_j^2\equiv0$ and (\ref{GaussMeroftn}), we have
\begin{align*}
\frac{-\phi_3+i\phi_4}{\phi_1+i\phi_2}=\frac{\phi_1-i\phi_2}{\phi_3+i\phi_4}=\frac{1}{G_1}.
\end{align*}
Therefore, $\deg G_1=4$, which again contradicts (\ref{RRR3}).
\end{proof}

\section{Appendix}
\setcounter{equation}{0}
\subsection{Appendix A} We provide the proof of Lemma \ref{AppendixLem}.
\begin{proof}[Proof of Lemma \ref{AppendixLem}]
Since the surface $\mathcal{DC}_a$ is a graph, any smooth simple closed curve on $\mathcal{DC}_a$ can be written as the graphical image of a smooth simple closed curve $z=z(s)$ in the $z$-domain, where $s$ denotes the arclength parametrization of $z(s)$. Thus, we may parametrize the curve on $\mathcal{DC}_a$ as
\begin{align*}
X(s)=\left(z(s),az(s)+\frac{1}{z(s)}\right).
\end{align*}
It suffices to show that $|z(s)|$ must be constant in order for $X(s)$ to be a plane curve.

To this end, we compute
\begin{align*}
X'(s)&=\left(z'(s), \left(a-\frac{1}{{z(s)}^2}\right)z'(s)\right),\\
X''(s)&=\left(z''(s), \left(a-\frac{1}{{z(s)}^2}\right)z''(s)+2\frac{{z'(s)}^2}{{z(s)}^3}\right).
\end{align*}
It follows that
\begin{align*}
\left(X'(s)\wedge X''(s)\right)_{23}+\left(X'(s)\wedge X''(s)\right)_{14}&=2\text{Im}\left(\frac{{z'(s)}^3}{{z(s)}^3}\right),\\
\left(X'(s)\wedge X''(s)\right)_{13}-\left(X'(s)\wedge X''(s)\right)_{24}&=2\text{Re}\left(\frac{{z'(s)}^3}{{z(s)}^3}\right),
\end{align*}
where $\left(X'(s)\wedge X''(s)\right)_{jk}$ denotes the coefficient of $E_j\wedge E_k$ for the standard basis $\{E_1, E_2, E_3, E_4\}$ of $\mathbb{R}^4$. 
If both values are zero, then $z'(s)=0$, which contradicts the fact that $s$ is an arclength parametrization. Therefore, these two values cannot vanish simultaneously. This implies that $X'(s)\wedge X''(s)$ is nonzero, and thus $X'(s)$ and $X''(s)$ are linearly independent for all $s$.

If the curve $X(s)$ lies in a plane, then the Pl\"{u}cker coordinates of the $2$-plane generated by $X'(s)$ and $X''(s)$ must be constant for all $s$. For the definition of Pl\"{u}cker coordinates, we refer to Section 3 of \cite{CS}. Applying the Gram--Schmidt orthonormalization to $\left\{X'(s), X''(s)\right\}$ and computing the Pl\"{u}cker coordinates, we deduce that
\begin{align*}
\frac{2}{\left|X'(s)\wedge X''(s)\right|}\cdot\text{Im}\left(\frac{{z'(s)}^3}{{z(s)}^3}\right)\quad\text{and}\quad \frac{2}{\left|X'(s)\wedge X''(s)\right|}\cdot\text{Re}\left(\frac{{z'(s)}^3}{{z(s)}^3}\right)
\end{align*}
are constant. 

Since these two quantities are constant, the image of $\frac{{z'(s)}^3}{{z(s)}^3}$ must lie on a line passing through the origin. As $z'(s)$ cannot be zero, it follows that the image of $\frac{z'(s)}{z(s)}$ is also contained in a line. However, since $z=z(s)$ is a simple closed curve, there exists some $s$ for which $\text{Re}\left(\frac{z'(s)}{z(s)}\right)=0$. Therefore, the line containing the image of $\frac{z'(s)}{z(s)}$ must be the imaginary axis. Hence, $\text{Re}\left(\frac{z'(s)}{z(s)}\right)$ must vanish for all $s$, which implies that $|z(s)|$ remains constant.
\end{proof}
\subsection{Appendix B} We compute the values of $\Theta$ between $Q_1(=Q_1(a))$, $Q_2(=Q_2(a, r_0))$, and $Q_3$ as defined in Section \ref{sym}. To calculate $\Theta$, we first derive an expression for the unit vectors in each vector space. For a $2$-dimensional subspace $V$ of $\mathbb{R}^4$ with a basis $\{v_1, v_2\}$, the set of linear combinations $(\cos\alpha)v_1+(\sin\alpha)v_2$ for $\alpha\in\mathbb{R}$ forms an ellipse within $V$. By normalizing each vector in this set, we obtain
\begin{align*}
\left\{\frac{(\cos\alpha)v_1+(\sin\alpha)v_2}{\left|(\cos\alpha)v_1+(\sin\alpha)v_2\right|}\in V\ \bigg{|}\ \alpha\in\mathbb{R}\right\},
\end{align*} 
which represents the complete set of unit vectors in $V$. Using the bases introduced at the beginning of Section \ref{sym}, we have the following expressions for unit vectors in $Q_1$, $Q_2$, and $Q_3$, respectively:
\begin{align*}
&u_1(\alpha):=\frac{1}{\sqrt{1+|a|^2}}
\begin{pmatrix}
\cos\alpha\\ \sin\alpha\\ |a|\cos(\alpha+\theta_a)\\ |a|\sin(\alpha+\theta_a)
\end{pmatrix},\\ 
&u_2(\alpha):=\frac{1}{\sqrt{1+{r_0}^4(1+|a|^2)+2{r_0}^2|a|\cos(2\alpha+\theta_a)}}
\begin{pmatrix}
{r_0}^2\cos\alpha\\ {r_0}^2\sin\alpha\\ {r_0}^2|a|\cos(\alpha+\theta_a)+\cos\alpha\\ {r_0}^2|a|\sin(\alpha+\theta_a)-\sin\alpha
\end{pmatrix},\\ 
&u_3(\alpha):=\begin{pmatrix}
0\\ 0\\ \cos\alpha\\ \sin\alpha
\end{pmatrix}.
\end{align*}

We now compute the values $\Theta_{Q_i, Q_j}$ in the following lemmas:
\begin{lemma}
$\Theta_{Q_2,Q_3}=\frac{1+{r_0}^2|a|}{\sqrt{\left(1+{r_0}^2|a|\right)^2+{r_0}^4}}$.
\end{lemma}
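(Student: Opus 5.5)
Since $Q_3$ is spanned by $e_3,e_4$, the supremum is a projection computation. For a unit vector $u\in Q_2$, the largest inner product with a unit vector of $Q_3$ equals the length of the orthogonal projection of $u$ onto $Q_3$, namely $\sqrt{u_3^2+u_4^2}$. So the plan is to maximize the last two coordinates of $u_2(\alpha)$ over $\alpha$.

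\textbf{Step 1: the projection length.} Write $u_2(\alpha)=v(\alpha)/|v(\alpha)|$, where
\begin{align*}
v(\alpha)=\left({r_0}^2\cos\alpha,\ {r_0}^2\sin\alpha,\ {r_0}^2|a|\cos(\alpha+\theta_a)+\cos\alpha,\ {r_0}^2|a|\sin(\alpha+\theta_a)-\sin\alpha\right).
\end{align*}
The squared length of the last two coordinates is
\begin{align*}
P(\alpha):=\left|{r_0}^2|a|e^{i(\alpha+\theta_a)}+e^{-i\alpha}\right|^2={r_0}^4|a|^2+1+2{r_0}^2|a|\cos(2\alpha+\theta_a).
\end{align*}
The first two coordinates contribute ${r_0}^4$, so $|v(\alpha)|^2=P(\alpha)+{r_0}^4$. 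This matches the normalization already written for $u_2$.

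\textbf{Step 2: the maximization.} Set $\Theta_{Q_2,Q_3}^2=\sup_\alpha P/(P+{r_0}^4)$. The function $x\mapsto x/(x+{r_0}^4)$ is increasing in $x\ge0$, so it suffices to maximize $P$. This happens at $\cos(2\alpha+\theta_a)=1$, which is attainable, giving $P_{\max}=(1+{r_0}^2|a|)^2$. Taking the square root yields the claimed formula $\frac{1+{r_0}^2|a|}{\sqrt{(1+{r_0}^2|a|)^2+{r_0}^4}}$.

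\textbf{Justification.} The only point needing care is that $\Theta$ is attained by taking $w$ to be the normalized projection. By Cauchy--Schwarz, $\langle u,w\rangle\le|\pi_{Q_3}u|$ for unit $w\in Q_3$. Equality holds when $w=\pi_{Q_3}u/|\pi_{Q_3}u|$, which is fine because $P>0$ at the maximizer.

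I expect no real obstacle. The main bookkeeping is the expansion of $P$ via $|Ae^{i(\alpha+\theta_a)}+e^{-i\alpha}|^2=A^2+1+2A\cos(2\alpha+\theta_a)$.
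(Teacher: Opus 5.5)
Your proof is correct and follows essentially the same route as the paper. There, Cauchy--Schwarz is applied to the $Q_3$-coordinates, which is your projection bound $\langle u,w\rangle\le|\pi_{Q_3}u|$; then the quotient is written as $\sqrt{1-{r_0}^4/(P+{r_0}^4)}$, the same monotonicity in $P$ you use, and maximized at $\cos(2\alpha+\theta_a)=1$, with equality realized by an explicit choice of unit vectors, just as your normalized projection realizes it.
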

\begin{proof}
For $\alpha, \beta\in\mathbb{R}$, the inner product between $u_3(\alpha)$ and $u_2(\beta)$ is given by
\begin{align*}
\left\langle u_3(\alpha), u_2(\beta)\right\rangle=\frac{\left({r_0}^2|a|\cos(\beta+\theta_a)+\cos\beta\right)\cos\alpha+\left({r_0}^2|a|\sin(\beta+\theta_a)-\sin\beta\right)\sin\alpha}{\sqrt{1+{r_0}^4(1+|a|^2)+2{r_0}^2|a|\cos(2\beta+\theta_a)}}.
\end{align*}
By the Cauchy--Schwarz inequality,
\begin{align*}
&\left({r_0}^2|a|\cos(\beta+\theta_a)+\cos\beta\right)\cos\alpha+\left({r_0}^2|a|\sin(\beta+\theta_a)-\sin\beta\right)\sin\alpha\\
&\leq\sqrt{\left({r_0}^2|a|\cos(\beta+\theta_a)+\cos\beta\right)^2+\left({r_0}^2|a|\sin(\beta+\theta_a)-\sin\beta\right)^2}\\
&=\sqrt{1+{r_0}^4|a|^2+2{r_0}^2|a|\cos(2\beta+\theta_a)}.
\end{align*}
It follows that
\begin{align*}
\left\langle u_3(\alpha), u_2(\beta)\right\rangle&\leq\frac{\sqrt{1+{r_0}^4|a|^2+2{r_0}^2|a|\cos(2\beta+\theta_a)}}{\sqrt{1+{r_0}^4(1+|a|^2)+2{r_0}^2|a|\cos(2\beta+\theta_a)}}\\
&=\sqrt{1-\frac{{r_0}^4}{1+{r_0}^4(1+|a|^2)+2{r_0}^2|a|\cos(2\beta+\theta_a)}}\\
&\leq\sqrt{1-\frac{{r_0}^4}{1+{r_0}^4(1+|a|^2)+2{r_0}^2|a|}}\\
&=\frac{1+{r_0}^2|a|}{\sqrt{\left(1+{r_0}^2|a|\right)^2+{r_0}^4}}.
\end{align*}
One can show that the equality 
\begin{align*}
\left\langle u_3(\alpha), u_2(\beta)\right\rangle=\frac{1+{r_0}^2|a|}{\sqrt{\left(1+{r_0}^2|a|\right)^2+{r_0}^4}}
\end{align*}
is achieved when $\cos(2\beta+\theta_a)=1$ and $\cos(\alpha+\beta)=1$. For instance, setting $\alpha=-\beta=\frac{\theta_a}{2}$ satisfies these conditions, completing the proof.
\end{proof}

\begin{lemma}
$\Theta_{Q_1,Q_2}=\frac{1}{\sqrt{1+|a|^2}}\cdot\frac{{r_0}^2\left(1+|a|^2\right)+|a|}{\sqrt{\left(1+{r_0}^2|a|\right)^2+{r_0}^4}}$.
\end{lemma}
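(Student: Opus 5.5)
The plan is to maximize $\langle u_1(\alpha),u_2(\beta)\rangle$ directly over $(\alpha,\beta)\in\mathbb{R}^2$, using the unit-vector parametrizations $u_1$ and $u_2$ given above. This follows the same strategy as the previous lemma, but one extra step is needed: there the denominator was largest exactly where the numerator bound was sharp, whereas here the denominator depends on $\beta$ in the opposite direction. So we cannot bound the numerator and denominator separately.

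\emph{Step 1: the inner product.} Expanding the inner product and using the addition formulas,
\begin{align*}
\langle u_1(\alpha),u_2(\beta)\rangle=\frac{r_0^2(1+|a|^2)\cos(\alpha-\beta)+|a|\cos(\alpha+\beta+\theta_a)}{\sqrt{1+|a|^2}\,\sqrt{1+r_0^4(1+|a|^2)+2r_0^2|a|\cos(2\beta+\theta_a)}}.
\end{align*}
Introduce $\psi:=\alpha-\beta$ and $\varphi:=2\beta+\theta_a$, so that $\alpha+\beta+\theta_a=\psi+\varphi$. Abbreviate $A:=r_0^2(1+|a|^2)$, $B:=|a|$ and $C:=1+r_0^4(1+|a|^2)$. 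The map $(\alpha,\beta)\mapsto(\psi,\varphi)$ is onto $\mathbb{R}^2$.

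\emph{Step 2: maximize in $\psi$ with $\varphi$ fixed.} The numerator is $A\cos\psi+B\cos(\psi+\varphi)$. By Cauchy--Schwarz its maximum over $\psi$ is $\sqrt{A^2+B^2+2AB\cos\varphi}$, and this maximum is attained. Hence
\begin{align*}
\Theta_{Q_1,Q_2}^2=\frac{1}{1+|a|^2}\max_{x\in[-1,1]}F(x),\qquad F(x):=\frac{A^2+B^2+2ABx}{C+2r_0^2Bx},
\end{align*}
where $x=\cos\varphi$. The supremum is nonnegative, since $\psi$ can be shifted by $\pi$. The denominator of $F$ is positive on $[-1,1]$ because $C-2r_0^2B=(1-r_0^2|a|)^2+r_0^4>0$.

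\emph{Step 3: the main step.} $F$ is a linear fractional function of $x$, so it is monotone on $[-1,1]$. The sign of $F'$ is the sign of
\begin{align*}
2AB\,C-2r_0^2B\,(A^2+B^2).
\end{align*}
The key identity is $AC=A+r_0^2A^2$, which follows from $r_0^4(1+|a|^2)\cdot A=r_0^2A^2$. Using it,
\begin{align*}
AC-r_0^2(A^2+B^2)=A-r_0^2|a|^2=r_0^2>0.
\end{align*}
So $F$ is nondecreasing (constant if $a=0$), and its maximum is at $x=1$. There
\begin{align*}
F(1)=\frac{(A+B)^2}{C+2r_0^2|a|}=\frac{\left(r_0^2(1+|a|^2)+|a|\right)^2}{(1+r_0^2|a|)^2+r_0^4}.
\end{align*}

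\emph{Step 4: conclusion.} Taking square roots gives the stated formula for $\Theta_{Q_1,Q_2}$. The value is attained, for instance, at $\varphi=0$ and $\psi=0$, that is, $\beta=-\theta_a/2$ and $\alpha=\beta$. Checking that $\psi=0$ is optimal when $\varphi=0$: the two cosines in the numerator then coincide, so Cauchy--Schwarz is sharp there.

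The only genuinely nontrivial point is Step 3, the monotonicity of $F$. Everything else is routine trigonometry.
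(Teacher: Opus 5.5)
Your proof is correct and is essentially the paper's argument: the same parametrization, Cauchy--Schwarz in the $Q_1$-parameter, reduction to a function of $x=\cos(2\beta+\theta_a)$ that you show is increasing, and the same maximizer $\alpha=\beta=-\theta_a/2$. The only difference is how monotonicity is shown: the paper rewrites the squared ratio as $1-\frac{1}{(1+|a|^2)(C+2r_0^2|a|x)}$, which is your identity $AC-r_0^2(A^2+B^2)=r_0^2$ divided by $r_0^2$; the paper used the same rewriting in the preceding $\Theta_{Q_2,Q_3}$ lemma, so this step is not actually new relative to that lemma, contrary to your framing.
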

\begin{proof}
For $\alpha, \beta\in \mathbb{R}$, we compute
\begin{align*}
\left\langle u_2(\alpha), u_1(\beta)\right\rangle=\frac{1}{\sqrt{1+|a|^2}}\cdot \frac{{r_0}^2(1+|a|^2)\cos(\alpha-\beta)+|a|\cos(\alpha+\beta+\theta_a)}{\sqrt{1+{r_0}^4(1+|a|^2)+2{r_0}^2|a|\cos(2\alpha+\theta_a)}}.
\end{align*}
Decomposing the numerator as
\begin{align*}
&{r_0}^2(1+|a|^2)\cos(\alpha-\beta)+|a|\cos(\alpha+\beta+\theta_a)\\
&=\left({r_0}^2(1+|a|^2)\cos\alpha+|a|\cos(\alpha+\theta_a)\right)\cos\beta\\
&\ \ \ +\left({r_0}^2(1+|a|^2)\sin\alpha-|a|\sin(\alpha+\theta_a)\right)\sin\beta,
\end{align*}
the Cauchy--Schwarz inequality gives
\begin{align*}
&{r_0}^2(1+|a|^2)\cos(\alpha-\beta)+|a|\cos(\alpha+\beta+\theta_a)\\
&\leq\sqrt{\left({r_0}^2(1+|a|^2)\cos\alpha+|a|\cos(\alpha+\theta_a)\right)^2+\left({r_0}^2(1+|a|^2)\sin\alpha-|a|\sin(\alpha+\theta_a)\right)^2}\\
&=\sqrt{{r_0}^4(1+|a|^2)^2+|a|^2+2{r_0}^2|a|(1+|a|^2)\cos(2\alpha+\theta_a)}.
\end{align*}
Substituting this into the expression for $\langle u_2(\alpha),u_1(\beta)\rangle$, we obtain
\begin{align*}
\left\langle u_2(\alpha), u_1(\beta)\right\rangle&\leq\frac{1}{\sqrt{1+|a|^2}}\cdot \frac{\sqrt{{r_0}^4(1+|a|^2)^2+|a|^2+2{r_0}^2|a|(1+|a|^2)\cos(2\alpha+\theta_a)}}{\sqrt{1+{r_0}^4(1+|a|^2)+2{r_0}^2|a|\cos(2\alpha+\theta_a)}}\\
&=\sqrt{1-\frac{1}{1+|a|^2}\cdot\frac{1}{1+{r_0}^4(1+|a|^2)+2{r_0}^2|a|\cos(2\alpha+\theta_a)}}\\
&\leq\sqrt{1-\frac{1}{1+|a|^2}\cdot\frac{1}{1+{r_0}^4(1+|a|^2)+2{r_0}^2|a|}}\\
&=\frac{1}{\sqrt{1+|a|^2}}\cdot\frac{{r_0}^2\left(1+|a|^2\right)+|a|}{\sqrt{\left(1+{r_0}^2|a|\right)^2+{r_0}^4}}.
\end{align*}
The equality 
\begin{align*}
\left\langle u_2(\alpha), u_1(\beta)\right\rangle=\frac{1}{\sqrt{1+|a|^2}}\cdot\frac{{r_0}^2\left(1+|a|^2\right)+|a|}{\sqrt{\left(1+{r_0}^2|a|\right)^2+{r_0}^4}}
\end{align*}
is achieved when $\cos(2\alpha+\theta_a)=1$ and $\cos(\alpha+\beta+\theta_a)=1$. For example, by setting $\alpha=\beta=-\frac{\theta_a}{2}$ we have the equality, and this completes the proof.
\end{proof}

\begin{lemma}
$\Theta_{Q_1,Q_3}=\frac{|a|}{\sqrt{1+|a|^2}}$.
\end{lemma}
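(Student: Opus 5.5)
We compute $\Theta_{Q_1,Q_3}$ by following the template of the two preceding lemmas. We use the unit vectors $u_1(\beta)\in Q_1$ and $u_3(\alpha)\in Q_3$ introduced at the start of Appendix B. Since $\Theta$ is the supremum of $\langle u_1(\beta),u_3(\alpha)\rangle$ over all $\alpha,\beta\in\mathbb{R}$, the task reduces to one explicit computation followed by a Cauchy--Schwarz bound. We then exhibit parameters attaining that bound.

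First, the inner product. Only the last two coordinates contribute, so
\begin{align*}
\left\langle u_3(\alpha),u_1(\beta)\right\rangle=\frac{|a|}{\sqrt{1+|a|^2}}\left(\cos(\beta+\theta_a)\cos\alpha+\sin(\beta+\theta_a)\sin\alpha\right)=\frac{|a|}{\sqrt{1+|a|^2}}\cos(\beta+\theta_a-\alpha).
\end{align*}
Note that, unlike in the case of $Q_2$, the normalizing factor of $u_1(\beta)$ does not depend on $\beta$. This is because the two basis vectors of $Q_1$ are orthogonal and of equal length $\sqrt{1+|a|^2}$. Consequently there is no second optimization over the ellipse parameter.

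Next, the bound and its attainment. Since $\cos(\cdot)\leq1$, we obtain $\langle u_3(\alpha),u_1(\beta)\rangle\leq\frac{|a|}{\sqrt{1+|a|^2}}$. Equality holds, for example, when $\alpha=\theta_a$ and $\beta=0$, so the supremum equals $\frac{|a|}{\sqrt{1+|a|^2}}$. When $a=0$ this reads $\Theta_{Q_1,Q_3}=0$, which is consistent with $Q_1(0)\perp Q_3$.

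There is no real obstacle here. The only point requiring care is to confirm that the family $u_1(\beta)$ described in Appendix B really exhausts all unit vectors of $Q_1$, so that the supremum is computed correctly. This follows immediately from the equal-length orthogonal basis observation above.
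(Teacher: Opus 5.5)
Your proposal is correct and follows the paper's proof essentially verbatim: you compute $\langle u_3(\alpha),u_1(\beta)\rangle=\frac{|a|}{\sqrt{1+|a|^2}}\cos(\beta-\alpha+\theta_a)$, bound the cosine by $1$, and check that equality is attained (your choice $\alpha=\theta_a$, $\beta=0$ works). Your remark that the basis of $Q_1$ is orthogonal with vectors of equal length is correct, and it justifies two points the paper leaves implicit: the normalizing factor of $u_1(\beta)$ is constant, and the family $u_1(\beta)$ exhausts the unit vectors of $Q_1$.
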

\begin{proof}
For $\alpha, \beta\in\mathbb{R}$, we have
\begin{align*}
\left\langle u_3(\alpha), u_1(\beta)\right\rangle=\frac{|a|}{\sqrt{1+|a|^2}}\cos(\beta-\alpha+\theta_a).
\end{align*}
Since $\cos(\beta-\alpha+\theta_a)\leq 1$, it follows that 
\begin{align*}
\left\langle u_3(\alpha), u_1(\beta)\right\rangle\leq\frac{|a|}{\sqrt{1+|a|^2}}.
\end{align*}
The equality is achieved when $\cos(\beta-\alpha+\theta_a)=1$. Thus, the value of $\Theta_{Q_1,Q_3}$ is $\frac{|a|}{\sqrt{1+|a|^2}}$.
\end{proof}


\end{document}